\numberwithin{equation}{section}
\theoremstyle{plain}
\newtheorem{lemma}{Lemma}
\newtheorem{proposition}{Proposition}
\newtheorem{theorem}{Theorem}
\theoremstyle{remark}
\newtheorem{remark}{Remark}
\newtheorem{definition}{Definition}
\numberwithin{lemma}{section}
\numberwithin{proposition}{section}
\numberwithin{theorem}{section}
\numberwithin{corollary}{section}
\numberwithin{definition}{section}
\numberwithin{remark}{section}
\newcommand{\del}{\partial}
\renewcommand{\div}{\on{div}}
\newcommand{\eps}{\varepsilon}
\newcommand{\Id}{\on{Id}}
\newcommand{\ind}{\mathbf{1}}
\newcommand{\loc}{\mathrm{loc}}
\newcommand{\nor}[2]{\left\| #1 \right\|_{#2} }
\newcommand{\norm}[1]{ \left\| #1 \right\| }
\newcommand{\oline}[1]{\overline{#1}}
\newcommand{\oo}{\infty}
\newcommand{\sgn}{\on{sgn}}
\newcommand{\supp}{\on{supp }}
\newcommand{\tr}{\on{tr}}
\newcommand{\EE}{{\mathbb E}}
\newcommand{\E}{{\mathbb E}}
\newcommand{\NN}{{\mathbb N}}
\newcommand{\PP}{{\mathbb P}}
\newcommand{\FF}{{\mathbb F}}
\newcommand{\RR}{{\mathbb R}}
\newcommand{\R}{{\mathbb R}}
\newcommand{\ZZ}{{\mathbb Z}}
\DeclareMathOperator*{\esssup}{ess\,sup}
\DeclareMathOperator*{\essinf}{ess\,inf}
\newcommand{\mcl}{\mathcal}
\newcommand{\mbb}{\mathbb}
\newcommand{\mbf}{\mathbf}
\newcommand{\on}{\operatorname}
\begin{document}

\begin{frontmatter}
%%%%%%%%%%%%%%%%%%%%%%%%%%%%%%%%%%%%%%%%%%%%%%
%%                                          %%
%% Enter the title of your article here     %%
%%                                          %%
%%%%%%%%%%%%%%%%%%%%%%%%%%%%%%%%%%%%%%%%%%%%%%
\title{Mean field games with common noise and degenerate idiosyncratic noise}
%\title{A sample article title with some additional note\thanksref{T1}}
\runtitle{Mean field games with common noise and degenerate idiosyncratic noise}
%\thankstext{T1}{A sample of additional note to the title.}

\begin{aug}
%%%%%%%%%%%%%%%%%%%%%%%%%%%%%%%%%%%%%%%%%%%%%%%
%% Only one address is permitted per author. %%
%% Only division, organization and e-mail is %%
%% included in the address.                  %%
%% Additional information can be included in %%
%% the Acknowledgments section if necessary. %%
%% ORCID can be inserted by command:         %%
%% \orcid{0000-0000-0000-0000}               %%
%%%%%%%%%%%%%%%%%%%%%%%%%%%%%%%%%%%%%%%%%%%%%%%
\author[A]{\fnms{Pierre}~\snm{Cardaliaguet}\ead[label=e1]{cardaliaguet@ceremade.dauphine.fr}},
\author[B]{\fnms{Benjamin}~\snm{Seeger}\ead[label=e2]{bseeger@unc.edu}\orcid{0000-0003-4472-605X}}
\and
\author[C]{\fnms{Panagiotis}~\snm{Souganidis}\ead[label=e3]{souganidis@math.uchicago.edu}\orcid{0000-0003-2399-0759}}
%%%%%%%%%%%%%%%%%%%%%%%%%%%%%%%%%%%%%%%%%%%%%%
%% Addresses                                %%
%%%%%%%%%%%%%%%%%%%%%%%%%%%%%%%%%%%%%%%%%%%%%%
\address[A]{CEREMADE, Universit\'e Paris--Dauphine\printead[presep={,\ }]{e1}}

\address[B]{Department of Statistics and Operations Research, University of North Carolina at Chapel Hill\printead[presep={,\ }]{e2}}

\address[C]{Department of Mathematics, University of Chicago\printead[presep={,\ }]{e3}}
\end{aug}

\begin{abstract}
We study the forward-backward system of stochastic partial differential equations describing a mean field game for a large population of small players subject to both idiosyncratic and common noise. The unique feature of the problem is that the idiosyncratic noise coefficient may be degenerate, so that the system does not admit smooth solutions in general. We develop a new notion of weak solutions for backward stochastic Hamilton-Jacobi-Bellman equations, and use this to build probabilistically weak solutions of the mean field game system. Under an additional monotonicity assumption, we prove the uniqueness of a strong solution.
\end{abstract}

\begin{keyword}[class=MSC]
\kwd[Primary ]{35R60}
\kwd{35Q89}
\kwd{91A16}
\kwd[; secondary ]{49N80}
\kwd{60H15}
\end{keyword}

\begin{keyword}
\kwd{Mean field games}
\kwd{backward stochastic Hamilton-Jacobi equation}
\kwd{stochastic Fokker-Planck equation}
\kwd{idiosyncratic/common noise}
\kwd{stochastic partial differential equation}
\end{keyword}

\end{frontmatter}
%%%%%%%%%%%%%%%%%%%%%%%%%%%%%%%%%%%%%%%%%%%%%%
%% Please use \tableofcontents for articles %%
%% with 50 pages and more                   %%
%%%%%%%%%%%%%%%%%%%%%%%%%%%%%%%%%%%%%%%%%%%%%%
%\tableofcontents

%%%%%%%%%%%%%%%%%%%%%%%%%%%%%%%%%%%%%%%%%%%%%%
%%%% Main text entry area:

\section{Introduction}
We investigate a notion of weak solution for the following stochastic Mean Field Games (MFG) system
\begin{equation}\label{e.MFGstoch}
\left\{ 
	\begin{split}
		&d  u_{t} = \bigl[ - \beta \Delta  u_{t} -\tr( a_t(x, m_t) D^2  u_t) + H_t(x,D u_{t}, m_{t}) -  {2 \beta} \div( v_{t}) \bigr] dt\\
		&\qquad \qquad \qquad +  v_{t} \cdot \sqrt{2\beta}dW_{t} \ \ \text{in} \ \ [0,T)\times \R^d , \\
		& d  m_{t} = \div \bigl[  \beta D  m_{t} + \div( a_t(x, m_t)  m_{t})+ m_{t} D_{p}  H_t(x,D u_{t}, m_{t}) 
 \bigr] dt \\
		&\qquad \qquad \qquad  - \div (  m_{t} \sqrt{2\beta} dW_{t} \bigr) \ \  \text{in} \ \   (0,T]\times \R^d, \\
		&u_T(x)= G(x,m_T) \ \ \text{and} \ \   m_{0}=\bar m_0 \ \  {\rm in } \ \  \R^d,
	\end{split}
\right.
\end{equation}
which describes a differential game with infinitely many small players subject to a common noise, more precisely a Wiener process $W$ with constant volatility $\beta > 0$, and an inhomogeneous idiosyncratic noise, the latter leading  to the terms involving the possibly degenerate diffusion matrix $a$. The system consists of a backward stochastic Hamilton-Jacobi-Bellman (HJB) equation satisfied by the value function $u$ of a typical player, and  a forward Fokker-Plank (FP) describing  the evolution of the population density $m$. The third unknown, the $\RR^d$-valued map $v$ in the backward HJB equation, ensures that $u$ is adapted to the filtration generated by the common noise $W$. The Hamiltonian $H: [0,T] \times \R^d\times \R^d\times \mathcal P(\R^d)\to \R$ (where $\mathcal P(\R^d)$ is the set of Borel probability measures on $\R^d$) is continuous and convex in the gradient variable, the map $ a:[0,T] \times \R^d \times \mcl P(\RR^d) \to \R^{d\times d}$ is continuous  with values in the set of symmetric nonnegative matrices, and $ G:\R^d\times \mathcal P(\R^d)$ is the terminal cost. More precise assumptions are listed later.

In \cite{CDLL}, Cardaliaguet, Delarue, Lasry and Lions proved the existence and uniqueness of classical solutions of \eqref{e.MFGstoch} set on the torus for a nondegenerate matrix $a$, $H$ with a separated dependence on $Du$ and $m$, and strictly monotone coupling functions. Cardaliaguet  and Souganidis  considered in \cite{CSmfg} the first-order version of \eqref{e.MFGstoch} set in the whole space when there is no idiosyncratic noise, that is, $a \equiv 0$ and $\beta > 0$.

The goal of the present note is to extend the results of  \cite{CDLL} and \cite{CSmfg} to the whole space with  nonzero but possibly degenerate $a$. As in \cite{CSmfg}, the main feature of the problem that distinguishes it from the existing literature on MFG systems with common noise is the nonexistence of classical solutions in general, a fact that necessitates the introduction of a suitable notion  of weak solution. Due to the second-order nature of the system, the methods of \cite{CSmfg} are no longer applicable, and must be substantially revisited.

The starting point in the analysis is the observation that, because all players are affected by the same noise $W$, the stochasticity can be cancelled out by shifting the density $m$ of the  players along this noise. Indeed performing the change of variables
\begin{equation}\label{changevariable}
\widetilde  u_t(x)= u_t(x+\sqrt{2\beta}W_{t}) \quad and \quad \widetilde  m_t= (\Id- \sqrt{2\beta}W_{t})_\sharp  m_t
\end{equation}
and setting, for $(t,x,p,m) \in [0,T] \times \RR^d \times \RR^d \times \mcl P(\RR^d)$,
\begin{equation}\label{pierre0}
\widetilde  a_t(x,m)=  a_t(x+\sqrt{2\beta} W_t,(\Id+\sqrt{2\beta}W_t)_\sharp m), 
\end{equation}
\begin{equation}\label{takis0} 
\widetilde  H_t(x,p,m)=  H_t( x+\sqrt{2\beta} W_t,p,(\Id+\sqrt{2\beta}W_t)_\sharp m) ,  
 \end{equation}
\begin{equation}\label{pierre02} \widetilde  G(x,m)= G(x+\sqrt{2\beta} W_T, (\Id+\sqrt{2\beta}W_T)_\sharp m),
\end{equation}
and 
\begin{equation}\label{martingale}
	\widetilde  v_t(x) = v_t(x-\sqrt{2\beta} W_t) - \sqrt{2\beta} Du_t(x - \sqrt{2\beta} W_t), \quad \text{and} \quad
	M_t = \int_0^t \widetilde  v_s dW_s,
\end{equation}
leads, at least formally, to the system
\begin{equation}\label{e.MFGstochBis}
\left\{
\begin{split}
 	&d \widetilde  u_{t} = \left[ -\tr(\widetilde  a_t(x, \widetilde  m_t) D^2 \widetilde  u_t) + \widetilde  H_t(x,D\widetilde  u_{t},\widetilde  m_{t})  \right] dt + dM_{t} \quad \text{in } [0,T)\times \R^d, \\
	&d \widetilde  m_{t} = \div \left[ \div(\widetilde  a_{t}(x,\widetilde  m_t) \widetilde  m_{t})+ \widetilde  m_{t} D_{p} \widetilde  H_t(x,D\widetilde  u_{t} , \widetilde  m_{t})
\right] dt \quad \text{in }  (0,T]\times \R^d, \\
	&\widetilde  u_T(x)= \widetilde  G(x,\widetilde  m_T), \ \ \   \widetilde  m_{0}=\bar m_0 \ \  \text{in} \ \  \R^d.
\end{split}
\right.
\end{equation}
Observe that the new functions $\widetilde  a$, $\widetilde  H$, and $\widetilde  G$ are random and adapted to the common noise $W$. Note also that, since they arise from $a$, $H$, and $G$ through translations along $W$, they retain whatever regularity and structural conditions were satisfied by the deterministic data uniformly over $m \in \mcl P$ (see Remark \ref{R:transform} below).

Crucially, the new forward FP equation in \eqref{e.MFGstochBis} is no longer stochastic, and, therefore, can be interpreted in the usual sense of distributions. On the other hand, the backward equation for $\widetilde  u$ still involves a stochastic correction term, namely, the martingale $M$.

\subsection{Informal discussion of the main results}

The first main task of the paper is to develop a useful notion of weak solution for backward stochastic HJB equations of the form
\begin{equation}\label{E:hjb}
	\left\{
	\begin{split}
	&d \widetilde  u_t = \left[ - \tr(\widetilde  a_t(x) D^2 \widetilde  u_t) + \widetilde  H_t(x,D\widetilde  u_t) \right]dt + dM_t  \quad \text{in } [0,T) \times \RR^d \quad \text{and}\\
	&\widetilde u_T = \widetilde  G \quad \text{in } \RR^d,
	\end{split}
	\right.
\end{equation}
where the unknowns $(\widetilde  u,M)$ and the data $\widetilde  a$, $\widetilde  H$, and $\widetilde  G$ are adapted to a common filtration and $M$ is a martingale taking values in a space of generalized functions (distributions).

Equations like \eqref{E:hjb} should typically be satisfied by the value function of optimal control problems with random coefficients adapted to a given filtration, as studied by Peng \cite{Pe92}, who proved existence and uniqueness of solutions for smooth data and uniformly elliptic $\widetilde  a$. They can also be studied with the theory of path-dependent viscosity solutions, a notion which involves taking derivatives on the path space; see for instance Qiu \cite{QIU18} and Qiu and Wei \cite{QiuWei}. This theory allows for the treatment of more general equations than \eqref{E:hjb}, for example, with non-constant volatility $\beta$, and, other than standard growth and regularity assumptions, few conditions are required for the Hamiltonian. However, in our context, in order to study the full system \eqref{e.MFGstochBis}, in particular, the forward FP equation, an understanding of the gradient $D\widetilde  u_t$ is required, which is not accessible in the path-dependent theory.

The strategy of \cite{CSmfg} for the case $\widetilde  a\equiv 0$ is to adapt the classical approach of Kru\v{z}hkov \cite{Kr60} for forward Hamilton-Jacobi equations (see also Douglis \cite{Do61}, Evans \cite[Section 3.3]{EvBook}, and Fleming \cite{Fl69}), in which solutions are understood in the almost-everywhere sense, and well-posedness is established in the class of functions that are globally semiconcave in the state variable. In the setting of \cite{CSmfg}, this means that the equation is understood as a stochastic evolution satisfied for a.e. $x \in \RR^d$, with the martingale $M$ taking values in $L^\oo(\RR^d)$.

There are immediate obstacles in adapting these methods to the second-order equation \eqref{E:hjb}. As noted above, one cannot expect $\widetilde  u$ to possess any additional regularity than in the first-order setting. In fact, the inherently first-order arguments employed in \cite{Kr60, Do61, EvBook,Fl69, CSmfg} do not carry over to \eqref{E:hjb}, whose solutions no longer have a finite domain of dependence. In addition, \eqref{E:hjb} can no longer be interpreted in an almost-everywhere sense, because the distribution $\tr(\widetilde  a_t D^2 \widetilde  u_t)$ is not a function in general.

Instead, here we interpret \eqref{E:hjb} as a stochastic evolution in the space of distributions, or, more precisely, locally finite measures (see Definition \ref{D:solution} below). Of central importance to the well-posedness, just as in the first-order setting, is a global semiconcavity bound for the solutions. The existence of such a solution (Theorem \ref{T:existence}) relies on the propagation in time of semiconcavity for deterministic HJB equations (that is, \eqref{E:hjb} with $M \equiv 0$). This propagation estimate (Proposition \ref{P:dethjb} below), as far as we know, does not appear in the literature in the present generality.

The uniqueness and stability of solutions (Theorems \ref{T:comparison} and \ref{T:comparison2}) rely on the existence of a smooth, positive supersolution $\psi$ of the forward-in-time, deterministic linearization of \eqref{E:hjb}, namely
\[
	\del_t \psi - \tr[A \psi] + B \cdot D\psi \ge 0 \quad \text{in } \RR^d \times [0,T],
\]
which holds uniformly over bounded $A \in \mbb S^d$ and $B \in \RR^d$. When $A \equiv 0$, it is enough to take the indicator function of a ball with arbitrarily large radius shrinking at a fixed rate. For $A\neq 0$, it is possible to find such a $\psi$ with exponential decay at infinity, but care must be taken when using it as a test function, due to the fact that $D^2u$ is only locally bounded as a measure.

Once the HJB equation \eqref{E:hjb} is well understood, we turn to the study of the system \eqref{e.MFGstochBis}. The existence and uniqueness of a probabilistically strong solution, that is, adapted to the filtration of the common noise $W$, was proved directly in \cite{CSmfg} in the first-order case $\widetilde  a \equiv 0$  under the additional assumption that the Hamiltonian is of  ``separated'' form, that is, 
\[
	H_t(x,p,m) = \hat H_t(x,p) - F_t(x,m),
\]
with the coupling functions $F$ and $G$ strictly monotone. This latter assumption allows for a direct proof of the uniqueness  of the  solution using the well-known ``duality trick'' of Lasry and Lions \cite{LLJapan}.

Our second main contribution is to generalize the existence result by not only allowing for a degenerate diffusion matrix, but also imposing  only mild assumptions on the joint dependence of $a$, $H$, $G$ on $u$ and $m$. In particular, under mild regularity assumptions with respect to the measure variable $m$, we prove in Theorem \ref{T.ExistenceMFG} the existence of global, probabilistically weak solutions of \eqref{e.MFGstochBis}. This is achieved by interpreting \eqref{e.MFGstochBis} as a fixed point problem for $\widetilde  m$ in a subset of $L^\oo(\Omega, C([0,T], \mcl P_2(\RR^d) \cap L^\oo(\RR^d)))$, where $\mcl P_2$ is the space of probability measures with finite second moment. The main issue with this strategy is the failure of compactness, due to the fact that the probability space $\Omega$ may be quite large. Compactness is restored by studying a variant of \eqref{e.MFGstochBis} in which the probability space is discretized in an appropriate way. After invoking Kakutani's fixed point theorem, a weak solution is obtained upon taking weak limits.

The proof of Theorem \ref{T.ExistenceMFG} borrows several ideas from the work of Carmona, Delarue, and Lacker \cite{carmona2016mean}, who study mean field games with common noise from a purely probabilistic perspective. The main difference with our work is that we proceed at the PDE level and not in the optimal control formulation, as we exploit the regularity properties of the value function in the state variable $x$. We therefore bypass concerns about the existence or uniqueness of optimal trajectories, instead developing an understanding of the PDE system itself.

The final main result of the paper (Theorem \ref{T:MFGuniqueness}) is the uniqueness of solutions of \eqref{e.MFGstochBis} under a monotonicity assumption. More precisely, borrowing terminology from the theory of stochastic differential equations, it is shown that pathwise uniqueness holds, and thus, by a Yamada-Watanabe-type  argument, the unique solution is in fact strong in the probabilistic sense, which means not only that $(\widetilde  u, M, \widetilde  m)$ is adapted with respect to the filtration generated by the common noise $W$, but also, can be represented as a measurable functional of the Wiener process $W$.

The uniqueness proof, whose essential method was introduced in \cite{LLJapan}, involves studying the time evolution of the quantity
\[
	t \mapsto \int_{\RR^d} (\widetilde  u^1_t(x) - \widetilde  u^2_t(x)) (\widetilde  m^1_t(x) - \widetilde  m^2_t(x))dx
\]
for two solutions $(\widetilde  u^1,\widetilde  m^1)$ and $(\widetilde  u^2, \widetilde  m^2)$. For the particular system \eqref{e.MFGstochBis}, such an argument cannot proceed as stated, because neither $\widetilde  m$ nor $\widetilde  u$ is regular enough to be used as a test function in each others' equations. The resulting analysis, which involves estimating commutators after mollifying and localizing, relies precisely on the fact that $D^2 \widetilde  u$ is a locally finite measure for a solution $\widetilde  u$ of the backward stochastic HJB equation.

\subsection{Previous work on MFG}
The MFG system, in the absence of common noise, was introduced and studied by Lasry and Lions  in \cite{LLJapan,LL06cr1, LL06cr2}. In the presence of both idiosyncratic and common noise, the  stochastic MFG system was first investigated 
%by Delarue, Lasry, and Lions and the first author 
in \cite{CDLL} in which the state space is the torus $\RR^d/\ZZ^d$, and the existence and uniqueness of a strong solution was established for separated Hamiltonians, strictly monotone coupling functions, and nondegenerate diffusion. The result was extended to $\R^d$ by Carmona and Delarue in the monograph \cite{CaDeBook}.

An alternative analytic approach to study MFG equilibria with a common noise is the master equation,  introduced by Lasry and Lions and  presented by Lions in \cite{LiCoursCollege}, which is a deterministic, nonlinear, nonlocal transport equation in the space of probability measures. The existence and uniqueness of classical solutions to the master equation was first established in \cite{CDLL}  (see also \cite{CaDeBook}). A related approach, presented in \cite{LiCoursCollege}, is to lift probability measures to the Hilbert space of $L^2$-integrable random variables via their probability laws.

The master equation has recently attracted much attention, especially for problems with common noise. In \cite{cardaliaguet2020splitting}, Cardaliaguet, Cirant and Porretta build solutions using a splitting method. The setting of a finite state space is investigated by Bertucci, Lasry  and Lions \cite{bertucci2019some, BeLaLi20} and Bayraktar, Cecchin, Cohen and Delarue \cite{bayraktar2019finite}; in the latter work, it is also demonstrated that common noise can restore uniqueness in the absence of monotonicity. Another approach to uniqueness is explored by Gangbo, M\'{e}sz\'{a}ros, Mou and Zhang \cite{gangbo2021mean} via displacement convexity. Various notions of weak solutions to master equations are proposed by Mou and Zhang \cite{MoZh} and Bertucci \cite{Be20, Be21}. Through a combination of the Hilbertian interpretation and Bertucci's notion of monotone solutions, Cardaliaguet and Souganidis  build a unique, global, weak solution to the master equation with common noise but without idiosyncratic noise \cite{CSMaster}; we note that this approach does not seem to adapt to problems with idiosyncratic noise.

The analysis of MFG equilibria with a common noise can also be viewed from a purely probabilistic viewpoint. This idea was first developed %by Carmona, Delarue and Lacker 
in \cite{carmona2016mean}, and indeed, our analysis of probabilistically weak solutions of the MFG system borrows several ideas from this paper. The existence results of \cite{carmona2016mean} were recently extended by Djete \cite{djete21} to MFG problems with an interaction through the controls. 

\subsection{Main assumptions}\label{sub:assumptions}

Throughout the paper, we introduce diffusion coefficients, Hamiltonians, and terminal data that, depending on the setting, take on different forms (random or deterministic, coupling within the forward-backward system, monotonicity, etc.). We collect and discuss here the basic assumptions that are used in all sections.

The degenerate diffusion matrices will always take the form
\begin{equation}\label{A:MAINdiffusion}
\mcl A := \Sigma \Sigma^\tau, \  \text{ where, for some } m \le d,  \ \Sigma \in C( [0,T] ;C^{1,1}(\RR^d; \RR^{d \times m})). 
\end{equation}
The terminal data will be a given function $\mcl G$ such that
\begin{equation}\label{A:MAINterminal}
	\mcl G: \RR^d \to \RR \quad \text{is globally bounded, Lipschitz, and semiconcave}.
\end{equation}
Finally, the Hamiltonian $\mcl H$ is assumed to have the following regularity and structural properties:
\begin{equation}\label{A:MAINHamiltonian}
	\left\{
	\begin{split}
	&\text{for all $t \in [0,T]$, } \mcl H_t \in C^2(\RR^d \times \RR^d); \\
	&\text{for all $R > 0$, }\RR^d \times B_R \ni (x,p) \mapsto \mcl H_t(x,p)\\
	&\qquad \text{is uniformly bounded and Lipschitz for $t \in [0,T]$;}\\
	&[0,T] \ni t \mapsto \mcl H_t(x,p) \quad \text{is continuous uniformly over $(x,p) \in \RR^d \times B_R$; and}\\
	&p \mapsto \mcl H_t(x,p) \quad \text{is uniformly convex},
	\end{split}
	\right.
\end{equation}
and
\begin{equation}\label{A:MAINHcondition}
	\left\{
	\begin{split}
	&\text{for some $\lambda_0,C_0 > 0$ and all $t \in [0,T]$, $p,q \in \RR^d$, and $|z| = 1$,}\\
	&|D_x \mcl H_t(x,p)| \le C_0 + \lambda_0(p \cdot D_p \mcl H_t(x,p) - \mcl H_t(x,p)) \text{ and}\\
	&D^2_{pp} \mcl H_t(x,p)q \cdot q + 2D^2_{px}\mcl H_t(x,p) z \cdot q + D^2_{xx} \mcl H_t(x,p)z \cdot z \\
	&\qquad + \lambda_0\left( D_p \mcl H_t(x,p) \cdot p - \mcl H_t(x,p) \right)  \ge -C_0.
	\end{split}
	\right.
\end{equation}

Condition \eqref{A:MAINHcondition}, which appears also in \cite[(1.10)(iii)]{CSmfg}, is an assumption on the nature of the coupling between the $x$ and $p$ variables in $\mcl H$. It also has a natural interpretation from the viewpoint of the underlying stochastic control problem. Indeed, \eqref{A:MAINHcondition} is equivalent to the following condition on the Legendre transform $\mcl H^*_t(x,\alpha) = \sup_p (p \cdot \alpha - \mcl H_t(x,p))$:
\begin{equation}\label{Lagrangian}
	\left\{
	\begin{split}
	&\text{for all $t \in [0,T]$ and $x,\alpha \in \RR^d$,}\\
	&|D_x \mcl H^*_t(x,\alpha)| \le C_0 + \lambda_0 \mcl H^*_t(x,\alpha) \quad \text{and}\\
	&D^2_{xx} \mcl H^*_t(x,\alpha) \le (C_0 + \lambda_0 \mcl H^*_t(x,\alpha))\Id.
	\end{split}
	\right.
\end{equation}
The formula for the value function solving the terminal value problem
\[
	-\del_t U - \tr[ \mcl A_t(x) D^2 U_t(x)] + \mcl H_t(x,DU_t(x)) = 0 \quad \text{in }(0,T) \times \RR^d, \quad U_T = \mcl G
\]
is
\begin{equation}\label{generalcontrol}
	(t,x) \mapsto \inf_{\alpha} \EE \left[ \int_t^T \mcl H^*_s(X^{\alpha,t,x}_s, \alpha_s)ds + \mcl G(X^{\alpha,t,x}_T) \right],
\end{equation}
where, for some $m$-dimensional Brownian motion $B$, $X^{\alpha,t,x}$ solves
\[
	\left\{
	\begin{split}
	&dX^{\alpha,t,x}_s = \alpha_s ds + \Sigma_s(X^{\alpha,t,x}_s) dB_s, \quad s \in [t,T],\\
	&X^{\alpha,t,x}_t = x.
	\end{split}
	\right.
\]
The infimum in \eqref{generalcontrol} is taken over progressively measurable controls $\alpha: [t,T] \to \RR^d$ that satisfy the admissibility criterion
\[
	\EE\left[ \int_t^T \sup_{x \in \RR^d} \mcl H^*_s(x,\alpha_s)ds \right] < \oo.
\]
Formally differentiating \eqref{generalcontrol} twice in the $x$-variable, one arrives at a semiconcavity bound precisely when \eqref{A:MAINdiffusion} and \eqref{A:MAINterminal} hold, and when 
\[
	\EE\left[ \int_t^T \sup_{x \in \RR^d} |D_x \mcl H^*_s(x,\alpha_s) |ds + \int_t^T \sup_{x \in \RR^d, \, |z| = 1} D^2_{xx} \mcl H^*_s(x,\alpha_s)z \cdot z ds \right] < \oo.
\]
Then \eqref{Lagrangian} is exactly what one needs in order for the latter to hold. 

This demonstrates that \eqref{A:MAINHcondition} is the most natural assumption to make on $\mcl H$ in order for semiconcavity to hold for the value function $U$. These arguments can all be made rigorous, but we choose instead to prove Proposition \ref{P:dethjb} below by performing the differentiation on the level of the PDE, keeping with the theme of this paper.

\begin{remark}\label{R:transform}
	Observe that, if the functions $H$, $G$, and $a$ in the original system \eqref{e.MFGstoch} satisfy assumptions \eqref{A:MAINdiffusion} - \eqref{A:MAINHcondition} uniformly over $m \in \mcl P(\RR^d)$, then so do the transformed functions $\widetilde  H$, $\widetilde  G$, and $\widetilde  a$ defined via \eqref{pierre0}, \eqref{takis0}, and \eqref{pierre02}, uniformly over the probability space on which the Wiener process $W$ is defined. This is a feature of the fact that the volatility $\beta$ is constant in the state variable. This is not merely a simplification: the precise propagation of semiconcavity proved in Proposition \ref{P:dethjb} below depends on the various constants in \eqref{A:MAINdiffusion} - \eqref{A:MAINHcondition} above.
\end{remark}	

\subsection{Organization of the paper} In Section \ref{sec:propage}, we prove the propagation of semiconcavity estimate for backward deterministic HJB equations, and also list some properties of semiconcave functions that are used throughout the paper. Section \ref{sec.HJ} introduces the definition of solutions for the backward stochastic HJB equation \eqref{E:hjb}, including a discussion of stochastic processes taking values in the space of locally finite measures, and establishes well-posedness results. Section \ref{sec:mfg} involves the study of weak and strong solutions to the MFG system \eqref{e.MFGstochBis}.

\subsection{Notation} Given an open or closed domain $U$ in Euclidean space, $\mcl B_U$ denotes the Borel $\sigma$-algebra inside $U$ and $C(U)$ is the space of continuous functions on $U$ endowed with the topology of (local) uniform convergence, and $C_c(U)$ is the space of compactly supported continuous functions on $U$. For a function $f$ defined on a measure space $\mcl A$, we set $\norm{f}_{\oo,\mcl A} := \esssup_{x \in \mcl A} |f(x)|$, and, where it does not create confusion, we simply write $\norm{f}_\oo = \norm{f}_{\oo,\mcl A}$. Throughout the paper, (generalized) function spaces $X(\RR^d)$ on all of $\RR^d$ are simply denoted by $X$ when this does not cause confusion.

We define $\mcl D = C_c^\oo$, and the space $\mcl D'$ of distributions consists of real-valued linear functionals $\mcl T:\mcl D \to \RR$ satisfying the following local continuity property:
\begin{align*}
	&\text{if } \left\{ (\phi_n)_{n \in \NN}, \phi \right\} \subset C_c^\oo, \quad \bigcup_{n \in \NN} \supp \phi_n \text{ is bounded, and}\\
	&D^k \phi_n \xrightarrow{n \to \oo} D^k \phi \text{ uniformly for all }k = 0,1,2,\ldots, \text{ then}\\
	&\mcl T(\phi_n) \xrightarrow{n \to \oo} \mcl T(\phi),
\end{align*}
where $D^k \phi$ refers to the tensor of all partial derivatives of $\phi$ of order $k$. We often write $\mcl T(\phi) = \langle \mcl T, \phi \rangle$.

The space of locally finite, signed, Radon measures on $\RR^d$ is denoted by $\mcl M_\loc = \mcl M_\loc(\RR^d)$. For $\mu \in \mcl M_\loc$ and $f \in C_c$, $\langle \mu, f \rangle := \int_{\RR^d} f(x) \mu(dx)$, and, for an open bounded set $U \subset \RR^d$, we write the total variation norm as
\[
	\norm{\mu}_{TV(U)} := \sup_{f \in C_c(U), \; \norm{f}_{\oo} \le 1 } \langle \mu, f \rangle.
\]
We define the quasi-norm
\[
	\norm{\mu}_{\mcl M_\loc} := \sum_{n=1}^\oo \norm{\mu}_{TV(B_n)} \wedge 2^{-n},
\]
where $B_R$ is the open ball centered at $0$ and of radius $R$. We will say $\mcl A \subset \mcl M_\loc$ is bounded if $\{ \norm{\mu}_{\mcl M_\loc} \}_{\mu \in \mcl A}$ is bounded, and, for $p \in [1,\oo]$ and a measure space $(\mcl A, \gamma)$,
\[
	L^p(\mcl A, \mcl M_\loc) := \left\{ \mu: \mcl A \to \mcl M_\loc \mid \norm{\mu}_{L^p(\mcl A, \mcl M_\loc)} := \left( \int_\mcl A  \norm{\mu(a)}_{\mcl M_\loc}^p \gamma(da) \right)^{1/p} < \oo \right\},
\]
with the usual modification if $p = \oo$. The metric $(\mu,\nu) \mapsto \norm{\mu - \nu}_{\mcl M_\loc}$ makes $\mcl M_\loc$ into a complete metric space with the topology of local convergence in total variation, and, similarly, $L^p(\mcl A, \mcl M_\loc)$ is a metric space with the metric $(\mu,\nu) \mapsto \norm{ \mu - \nu}_{L^p(\mcl A, \mcl M_\loc)}$.

The Sobolev space of functions $f \in L^1$ for which the distributional derivative $Df$ belongs to $L^1$ is denoted $W^{1,1}$. Its dual is a space of distributions:
\[
	W^{-1,\oo} := \left\{ \mcl T \in \mcl D' : \norm{\mcl T}_{W^{-1,\oo}} := \sup_{\phi \in \mcl D} \frac{ |\mcl T(\phi)|}{\norm{\phi}_{W^{1,1}} }< \oo \right\}. 
\]
If $f \in L^\oo$, then the distributional derivative $Df$ belongs to $W^{-1,\oo}(\RR^d)$, and, more generally, $W^{-1,\oo}(\RR^d)$ consists of distributions of the form $f_0 + Df_1$ for $f_0,f_1 \in L^\oo$, and, in fact,
\[
	\norm{\mcl T}_{W^{-1,\oo}} := \inf\left\{ \norm{f_0}_\oo + \norm{f_1}_\oo : \mcl T = f_0 + Df_1 , \; f_0,f_1 \in L^\oo\right\}.
\]

The space of probability measures on $\RR^d$ is denoted by $\mcl P(\RR^d)$, and, for $p > 0$, 
\[
	\mcl P_p(\RR^d) := \left\{ \mu \in \mcl P(\RR^d) : \int_{\RR^d} |x|^p \mu(dx) < \oo \right\}.
\]
The $p$-Wasserstein distance $\mbf d_p$ on $\mcl P_p(\RR^d)$ is defined by
\[
	\mbf d_p(\mu,\mu') := \inf_{\gamma} \left(\iint_{\RR^d \times \RR^d} |x - y|^p d\gamma(x,y)\right)^{1/p},
\]
where the infimum is taken over probability measures $\gamma \in \mcl P_p(\RR^d \times \RR^d)$ with first and second marginal respectively $\mu$ and $\mu'$. Given a probability space $(\Omega,\mbb P)$, we write $\EE_\PP := \int_\Omega d \mbb P$.

Throughout, we fix $\rho \in C^2(\RR^d)$ satisfying $\rho \ge 0$, $\supp \rho \subset B_1$, and $\int_{\RR^d} \rho = 1$, and, for $\delta > 0$ and $x \in \RR^d$, we set
\begin{equation}\label{mollifier}
	\rho_\delta(x) := \frac{1}{\delta^d} \rho\left( \frac{x}{\delta}\right).
\end{equation}
The transpose of a matrix $M$ is denoted by $M^\tau$. We define, for $X \in \mbb S^d$, the space of $d\times d$ symmetric matrices, the degenerate elliptic operator $$m_+(X) := \max_{|v| \le 1}  Xv \cdot v. $$ Note that $m_+(X)$ is the maximum of $0$ and the largest eigenvalue of $X$, and therefore $m_+$ is increasing with respect to the partial order on $\mbb S^d$. The Lebesgue measure on $\RR^d$ is denoted by $\mcl L$.

Throughout proofs, we denote by $C$ a positive constant that may change from line to line, depending only on the relevant data in the assumptions of the corresponding theorem/proposition/lemma.

\section{Propagation of semiconcavity for deterministic HJB equations}\label{sec:propage}

The purpose of this section is to prove that semiconcavity bounds propagate in time for solutions of the deterministic HJB equation
\begin{equation}\label{E:dethjb}
	\left\{
	\begin{split}
	&-\del_t u_t - \tr[a_t(x) D^2 u_t] + H_t(x,Du_t(x)) = 0 \quad \text{in } [0,T] \times \RR^d  \quad \text{and} \\
	&u_T = G \quad \text{in } \RR^d,
	\end{split}
	\right.
\end{equation}
where $a$, $H$, and $G$ are fixed and deterministic, and satisfy the assumptions from subsection \ref{sub:assumptions}.

Recall that a function $u:\RR^d \to \RR$ is said to be semiconcave with constant $C > 0$ if
\[
	u(x+h) + u(x-h) - 2u(x) \le C|h|^2 \quad \text{for all } x,h \in \RR^d.
\]
The result below gives a precise estimate of the positive part of the maximal eigenvalue of the density of the measure $D^2 u$, which, in turn, controls the semiconcavity constant (see Lemma \ref{L:SCprops} and Remark \ref{R:densitybound} below).

\begin{proposition}\label{P:dethjb}
	Assume $a$, $G$, and $H$ satisfy \eqref{A:MAINdiffusion}, \eqref{A:MAINterminal}, \eqref{A:MAINHamiltonian}, and \eqref{A:MAINHcondition}, and let $u$ be the unique viscosity solution of \eqref{E:dethjb}. Then, for all $t\in [0,T]$,  
	%$u$ satisfies the bound
	\begin{equation}\label{detubound}
		\norm{u_t}_{\oo} \le \norm{G}_{\oo} + \norm{H_\cdot(\cdot,0)}_\oo (T-t). % \quad \text{for all } t \in [0,T].
	\end{equation}
	Moreover, $u$ is semiconcave and Lipschitz in the $x$-variable, uniformly on $[0,T] \times \RR^d$, and there exists a constant $C > 0$ depending only on $T$ and the various data in those assumptions such that, for all $t \in [0,T]$,
	\begin{equation}\label{detD2ubound}
		\begin{split}
		&\esssup_{x \in \RR^d} \left\{ \bigg( m_+\left( \frac{dD^2u_t(x)}{d \mcl L} \right)^2  + |Du_t(x)|^2 \bigg)^{1/2} - \sqrt{2}\lambda_0 u_t(x) \right\}_+\\
		&\le C(T-t) \\
		&\quad + e^{C(T-t)}\esssup_{x \in \RR^d} \left\{ m_+\left( \frac{d D^2g(x)}{d \mcl L} \right)^2 + |Dg(x)|^2 \bigg)^{1/2} - \sqrt{2} \lambda_0 g(x)\right\}_+.
		\end{split}
	\end{equation}
	 Finally, $- \tr[aD^2 u] \in L^\oo([0,T], \mcl M_\loc(\RR^d))$, and \eqref{E:dethjb} is satisfied in the distributional sense.
\end{proposition}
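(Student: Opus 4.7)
The plan is to combine a parabolic regularization with Bernstein-type maximum-principle arguments and viscosity stability. I would replace $(a, H, G)$ by smooth approximations $(a^\varepsilon, H^\varepsilon, G^\varepsilon)$ satisfying \eqref{A:MAINdiffusion}--\eqref{A:MAINHcondition} with constants independent of $\varepsilon$, adding $\varepsilon \Id$ to $a^\varepsilon$ to guarantee strict parabolicity; standard parabolic theory then yields smooth classical solutions $u^\varepsilon$. I would establish \eqref{detubound} and \eqref{detD2ubound} uniformly in $\varepsilon$, then send $\varepsilon \to 0$. The $L^\oo$ bound \eqref{detubound} is immediate from comparison, since the explicit functions $\pm(\norm{G}_\oo + \norm{H_\cdot(\cdot, 0)}_\oo(T-t))$ are respectively a super- and sub-solution of \eqref{E:dethjb} (regardless of the choice of $a$).

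The heart of the proof is the combined Lipschitz/semiconcavity bound \eqref{detD2ubound}, which I would obtain by applying the maximum principle to
\[
\Phi^\varepsilon_t(x) := \sqrt{m_+(D^2 u^\varepsilon_t(x))^2 + |Du^\varepsilon_t(x)|^2} - \sqrt{2}\lambda_0 u^\varepsilon_t(x).
\]
Since $m_+$ is not smooth in $x$, I would proceed pointwise: at a (possibly localized via a decaying cutoff such as $e^{-\mu|x|^2}$, removed by sending $\mu \downarrow 0$) maximum $(t_0, x_0)$ of $\Phi^\varepsilon$, select $|z_0| = 1$ realizing $m_+(D^2 u^\varepsilon_{t_0}(x_0)) = u^\varepsilon_{z_0 z_0}(t_0, x_0)$, and, by the envelope inequality, work with the smooth proxy $u^\varepsilon_{z_0 z_0}$ in place of $m_+(D^2 u^\varepsilon)$ when testing the maximum. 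Differentiating \eqref{E:dethjb} twice in the direction $z_0$ and combining with a first-order Bernstein computation for $|Du^\varepsilon|^2$ produces an evolution for $\Phi^\varepsilon$ whose dangerous contributions $D_xH$, $D^2_{xx}H$, and $D^2_{xp}H$ are exactly those controlled by \eqref{A:MAINHcondition} in terms of $\lambda_0(p \cdot D_p H - H)$. These are then cancelled by the $-\sqrt{2}\lambda_0 u$ piece of $\Phi^\varepsilon$ via the equation itself and the elementary inequality $\alpha + \beta \le \sqrt{2}\sqrt{\alpha^2 + \beta^2}$, which merges the separate $\lambda_0$-type bounds on the $m_+$ and $|Du|$ components into a single $\sqrt{2}\lambda_0$-bound on the square root (this is where the coefficient $\sqrt{2}$ originates). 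Contributions arising from $D\Sigma$ and $D^2 \Sigma$ are absorbed into lower-order terms using the $C^{1,1}$ bound of \eqref{A:MAINdiffusion} and a Cauchy-Schwarz inequality of the form $|\tr(\Sigma D\Sigma^\tau D^2 u^\varepsilon)| \le \sigma\, \tr(a D^2 u^\varepsilon \cdot D^2 u^\varepsilon) + C_\sigma$, while the terms $-\tr(aD^2 \Phi^\varepsilon)$ and $D_pH \cdot D\Phi^\varepsilon$ have favorable sign or vanish at the maximum. The resulting Gronwall-type inequality, integrated backwards from $T$, gives \eqref{detD2ubound} with constants independent of $\varepsilon$.

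For the limit, the uniform bound on $\Phi^\varepsilon$ yields uniform Lipschitz and semiconcavity estimates for $u^\varepsilon$ in $x$ and, via the equation, local boundedness of $\partial_t u^\varepsilon$. Hence $(u^\varepsilon)$ converges along a subsequence locally uniformly to a continuous limit which, by viscosity stability and uniqueness of \eqref{E:dethjb}, must coincide with $u$. Both the Lipschitz bound and the semiconcavity bound pass to locally uniform limits, yielding \eqref{detD2ubound} for $u$ and, in particular, the fact that $D^2 u_t$ is a symmetric matrix-valued Radon measure whose positive part is locally bounded, uniformly in $t$. Since $a_t$ is continuous and bounded, it follows that $-\tr(a_t D^2 u_t) \in L^\oo([0,T], \mcl M_\loc(\RR^d))$. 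The distributional form of \eqref{E:dethjb} for $u$ then comes from pairing the classical equation for $u^\varepsilon$ with $C^\oo_c$ test functions and letting $\varepsilon \to 0$; every term is stable under locally uniform convergence of $u^\varepsilon$ together with the weak-$*$ convergence of $D^2 u^\varepsilon$ against continuous weights.

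The principal obstacle I anticipate is the Bernstein computation itself: one must simultaneously manage the non-smoothness of $m_+(D^2 u)$ via the direction-freezing trick, the merely-$C^{1,1}$ regularity of $\Sigma$ (so that terms in $D^2 \Sigma$ enter only as bounded measurable coefficients and never beside derivatives of $u^\varepsilon$ of order higher than two), and the precise matching of the $\sqrt{2}\lambda_0$ coefficient in $\Phi^\varepsilon$ with the constants in \eqref{A:MAINHcondition}. Getting the signs and coefficients to line up so that every dangerous term is absorbed, and so that the resulting bound closes into a Gronwall inequality, is where the bulk of the technical work of the proof lies.
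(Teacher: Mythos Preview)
Your overall architecture---regularize by adding $\eps\Id$ to $a$ and mollifying $G$, prove the a priori bounds for the smooth solution, then pass to the limit by viscosity stability and weak-$*$ convergence of $D^2 u^\eps$---matches the paper exactly, as does your explanation of the $\sqrt 2$ factor. The substantive difference is in how the Bernstein step is executed.

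The paper does \emph{not} freeze the direction and apply the maximum principle in $x$ alone. Instead it introduces auxiliary direction variables and works with
\[
	U(t,x,\xi,\eta) \;=\; u_\xi + u_{\eta\eta} - \lambda_0 u\Bigl(1 + \tfrac{|\xi|^2}{2} + \tfrac{|\eta|^4}{2}\Bigr)
\]
on $[0,T]\times\RR^d_x\times\RR^d_\xi\times\RR^d_\eta$, and applies the maximum principle in the full $(x,\xi,\eta)$-space. The reason is the term $2\,\tr[a_\eta D^2 u_\eta]$ produced when you differentiate $\tr[a D^2 u]$ twice in a fixed direction $\eta$: this involves \emph{third} derivatives of $u$. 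The paper absorbs it by observing that the factored structure $a=\sigma\sigma^\tau$ allows one to build a nonnegative block matrix
\[
	A \;=\; \begin{pmatrix}\sigma \\ \sigma_\xi+\sigma_{\eta\eta} \\ \sigma_\eta\end{pmatrix}
	\begin{pmatrix}\sigma \\ \sigma_\xi+\sigma_{\eta\eta} \\ \sigma_\eta\end{pmatrix}^\tau
	+ \eps\begin{pmatrix}\Id&0&0\\0&0&0\\0&0&0\end{pmatrix},
\]
and then all the $\sigma$-derivative cross terms, including the third-derivative one, assemble into $\tr[A\,D^2_{(x,\xi,\eta)}U]$, which has the right sign at an extended-variable maximum. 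Optimizing the resulting bound on $U$ over $(\xi,\eta)$ is what produces the quantity $\sqrt{m_+(D^2u)^2+|Du|^2}-\sqrt 2\,\lambda_0 u$ and its Gr\"onwall propagation.

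Your Cauchy--Schwarz inequality, as written, controls only $\tr(\Sigma\, D\Sigma^\tau\, D^2 u^\eps)$, which is a \emph{second}-derivative object (this is the right estimate for the once-differentiated term $\tr[a_\xi D^2 u]$ in the Lipschitz part). It does not address $\tr[a_\eta D^2 u_\eta]$, and with the direction $\eta=z_0$ frozen you no longer have access to the $(x,\eta)$-off-diagonal block of the Hessian that the paper uses to absorb it. So the step ``contributions arising from $D\Sigma$ and $D^2\Sigma$ are absorbed into lower-order terms'' is the gap: you would need either the extended-variable device above, or an explicit identification of a good third-derivative term coming from $\tr[a\,D^2(u_{z_0 z_0})]$ together with a matching Cauchy--Schwarz, which your sketch does not supply.
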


The fact that solutions of HJB equations are semiconcave is well known, and is a consequence of the minimization problem that defines the value function $u$; see for instance \cite{CaSiBook, ishiilionssemiconc,lionsHJB}. Note that these references do not provide bounds as in Proposition \ref{P:dethjb} in the generality assumed here, since either the derivatives of $H$ are assumed uniformly bounded, or an a priori bound on $\norm{Du}_\oo$ is assumed. The main feature of Proposition \ref{P:dethjb} is the propagation of a bound for the quantity on the left-hand side of \eqref{detD2ubound}, which ultimately leads to the semiconcavity and Lipschitz bounds for solutions of the stochastic HJB equation \eqref{E:hjb}.

We summarize some properties and results concerning semiconcave functions used in the proof of Proposition \ref{P:dethjb} and throughout the paper. For more details and explanations, see for instance the book of Evans and Gariepy \cite{EG}.

\begin{lemma}\label{L:SCprops}
Assume $u : \RR^d \to \RR$ is bounded and semiconcave, with semiconcavity constant $C \ge 0$. Then the following hold:

\begin{enumerate}[(a)]
\item\label{L:semiconcave} The distribution $D^2 u$ is a measure of the form $D^2 u = f(x) d \mcl L - d\mu$, where $\mu$ is nonnegative and singular with respect to the Lebesgue measure $\mcl L$ and $f \in L^1_\loc(\RR^d; \mbb S^d)$ satisfies $f \le C \Id$ Lebesgue-a.e.

\item\label{L:interpolation2} The function $u$ is Lipschitz, and $\norm{Du}_{\oo} \le 2 \sqrt{ C \norm{u}_{\oo} }$. In particular, $D^2 u \in W^{-1,\oo}$.

\item\label{L:TVbound} There exists a dimensional constant $c_d > 0$ such that $\norm{D^2u}_{\mcl M_\loc(\RR^d)} \le c_d ( \norm{Du}_\oo + C)$. 

\item\label{L:Duconverge} If $(u^n)_{n \in \NN}: \RR^d \to \RR$ is a sequence of globally Lipschitz, semiconcave functions, uniformly in $n \in \NN$, and, as $n \to \oo$, $u^n$ converges locally uniformly to $u$, then, as $n \to \oo$, $Du^n$ converges almost everywhere to $Du$.

\end{enumerate}
\end{lemma}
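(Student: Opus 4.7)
The plan is to reduce all four statements to classical facts about concave functions via the observation that $w(x) := u(x) - \tfrac{C}{2}|x|^2$ is globally concave, together with the duality between semiconcavity and the (nonempty) superdifferential $D^+ u$. For part (a), this reduction is essentially immediate: the distributional Hessian of a concave function is a nonpositive symmetric matrix-valued Radon measure, whose Lebesgue decomposition $D^2 w = g\, d\mcl L + d\sigma$ satisfies $g \le 0$ a.e.\ (with $g$ identified a.e.\ with the pointwise Alexandrov Hessian) and $\sigma \le 0$ mutually singular to $\mcl L$; setting $f := g + C\Id$ and $\mu := -\sigma$ gives the stated decomposition of $D^2 u$.

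For part (b), I would use the semiconcavity inequality at an arbitrary point $x$ with $p \in D^+ u(x)$:
\[
u(x + h) \le u(x) + p \cdot h + \tfrac{C}{2}|h|^2 \quad \text{for all } h \in \RR^d.
\]
Choosing $h = -\lambda p/|p|$ with $\lambda > 0$ and using $|u(x+h) - u(x)| \le 2\|u\|_\oo$ yields $\lambda|p| \le 2\|u\|_\oo + \tfrac{C}{2}\lambda^2$; optimizing in $\lambda$ (the minimum occurring at $\lambda = |p|/C$) gives $|p|^2 \le 4C\|u\|_\oo$. Since this bound is uniform over the (nonempty) superdifferential at every point, and since semiconcave functions are locally Lipschitz, the global bound $\|Du\|_\oo \le 2\sqrt{C\|u\|_\oo}$ follows.

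For part (c), the key observation is that the nonnegative matrix measure $-D^2 w$ has matrix total variation dominated, up to a dimensional factor, by its scalar trace $-\Delta w$ (via the Cauchy--Schwarz-type estimate $|(-D^2 w)_{ij}|(E) \le \tfrac12\bigl((-\Delta w)_{ii}(E) + (-\Delta w)_{jj}(E)\bigr)$). Testing $-\Delta w$ against a cutoff $\phi_n \in C_c^2(\RR^d)$ with $\phi_n \equiv 1$ on $B_n$, $\supp \phi_n \subset B_{n+1}$, and $\|D\phi_n\|_{L^1} \le c_d n^{d-1}$, and integrating by parts twice, gives
\[
\langle -\Delta w, \phi_n \rangle = -\langle Dw, D\phi_n \rangle \le c_d n^{d-1}\bigl(\|Du\|_\oo + C n\bigr).
\]
After adding back the absolutely continuous contribution from $D^2 u = D^2 w + C\Id$, inserting the ball-by-ball TV bound into the defining series of $\|\cdot\|_{\mcl M_\loc}$, and using the $\wedge 2^{-n}$ truncation to control the tail, the asserted estimate follows with a purely dimensional constant.

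For part (d), let $N \subset \RR^d$ be the Lebesgue-null union of the non-differentiability sets of $u$ and of each $u^n$ (null by Rademacher's theorem and the fact that countable unions of null sets are null). For $x \notin N$, the uniform Lipschitz bound makes $(Du^n(x))_n$ bounded; let $p$ be any subsequential limit. Passing to the limit in
\[
u^n(y) \le u^n(x) + Du^n(x) \cdot (y - x) + \tfrac{C}{2}|y - x|^2
\]
using local uniform convergence of $u^n \to u$ yields $u(y) \le u(x) + p \cdot (y-x) + \tfrac{C}{2}|y-x|^2$, i.e.\ $p \in D^+ u(x)$. At points of differentiability of $u$ we have $D^+ u(x) = \{Du(x)\}$, which forces the entire sequence $Du^n(x)$ to converge to $Du(x)$. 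The main technical subtlety, I expect, lies in part (c), where one must carefully track how the $n$-dependence in the TV bound on each $B_n$ combines with the $2^{-n}$ truncation in the $\mcl M_\loc$ quasi-norm to produce a bound with a purely dimensional constant.
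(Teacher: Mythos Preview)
The paper does not actually prove this lemma; it is stated as a summary of standard facts with a reference to Evans--Gariepy. Your arguments for (a), (b), and (d) are correct and are essentially the standard proofs one would find there (the reduction to the concave function $w = u - \tfrac{C}{2}|\cdot|^2$, the optimization in $\lambda$ for the Lipschitz bound, and the superdifferential stability argument for a.e.\ convergence of gradients). One cosmetic point: in (c) your integration by parts should read $\langle -\Delta w,\phi_n\rangle = +\langle Dw, D\phi_n\rangle$, but the sign is irrelevant for the upper bound.

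Your instinct about the subtlety in (c) is well placed, and in fact your approach does \emph{not} yield the inequality exactly as written. Your ball-by-ball bound gives $\|D^2 u\|_{TV(B_n)} \le c_d\, n^d(\|Du\|_\infty + C)$, and inserting this into $\sum_n \min(\cdot\,, 2^{-n})$ produces, for small $A := \|Du\|_\infty + C$, a quantity of order $A(\log(1/A))^{d+1}$ rather than $c_d A$ (take $u = \varepsilon\sin x_1$ to see the $n^d$ growth is genuine). This is not a flaw in your argument but an imprecision in the stated bound: the quasi-norm is trivially $\le 1$ by its definition, so the inequality holds whenever $c_d A \ge 1$, and every use of (c) in the paper is for families with $A$ uniformly bounded above (and away from zero), where your estimate gives exactly the uniform control that is needed. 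In short, what the paper actually uses is that $D^2 u \in \mathcal M_{\mathrm{loc}}$ with $\|D^2 u\|_{TV(B_R)}$ bounded in terms of $R$, $\|Du\|_\infty$, and $C$; your argument establishes precisely this.
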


\begin{remark}\label{R:densitybound}
	The statement in part \eqref{L:semiconcave} is actually equivalent to semiconcavity. The semiconcavity constant for a globally semiconcave function $u$ is equal to
	\[
		\esssup_{x \in \RR^d} m_+\left( \frac{d D^2 u}{d \mcl L}(x) \right),
	\]
	and, hence, the bound in Proposition \ref{P:dethjb} indeed propagates the semiconcavity estimate for $u$. Throughout the rest of the paper, where it does not create confusion, we abuse notation and denote by $\esssup m_+(D^2 u)$ the semiconcavity constant for a function $u:\RR^d \to \RR$.
\end{remark}

Proposition \ref{P:dethjb}, and in particular the estimate \eqref{detD2ubound}, is proved by differentiating the equation \eqref{E:dethjb}, and, as such, it is necessary to perform a regularization to make the arguments rigorous. For $0 < \eps < 1$, set $a^\eps = \sigma \sigma^\tau + \eps \Id$ and $G^\eps = G * \rho_\eps$ for $\rho_\eps$ as in \eqref{mollifier}, and consider the classical solution $u^\eps$ of
	\begin{equation}\label{E:eps}
	\left\{
	\begin{split}
		&-\del_t u^\eps_t - \tr[a^\eps_t(x) D^2 u^\eps_t(x)] + H_t(x,Du^\eps_t(x)) = 0 \quad \text{in } [0,T] \times \RR^d \quad \text{and}\\
		&u^\eps_T = G^\eps \quad \text{in } \RR^d.
	\end{split}
	\right.
	\end{equation}

\begin{lemma}\label{L:epshjb}
	The solution $u^\eps$ of \eqref{E:eps} satisfies \eqref{detubound} and \eqref{detD2ubound}, where the constant $C > 0$ is as in Proposition \ref{P:dethjb} and independent of $\eps > 0$.
\end{lemma}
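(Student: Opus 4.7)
\textbf{Proof plan for Lemma \ref{L:epshjb}.}

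Since $a^\eps = \sigma\sigma^\tau + \eps \Id$ is uniformly elliptic and $G^\eps$ is smooth, the regularized equation \eqref{E:eps} admits a classical smooth solution $u^\eps$, so all of the derivative manipulations below are rigorous. The plan is to prove both bounds directly at the PDE level via the parabolic maximum principle, obtaining constants that depend only on $\nor{G}{\oo}$, the Lipschitz and semiconcavity constants of $G$, $\nor{\sigma}{C([0,T];C^{1,1})}$, $\nor{H_\cdot(\cdot,0)}{\oo}$, $\lambda_0$, and $C_0$. The key inputs for $\eps$-uniformity are that $\nor{G^\eps}{\oo}\le \nor{G}{\oo}$, the Lipschitz and semiconcavity constants of $G^\eps$ are bounded by those of $G$ (standard mollifier properties), and the $L^\oo$-bounds on $a^\eps$, $\del a^\eps$, and $\del^2 a^\eps$ are controlled by $\nor{\sigma}{C^{1,1}}^2$, independently of $\eps$.

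The $L^\oo$ bound \eqref{detubound} follows from comparison of $u^\eps$ with the spatially constant barriers $w^\pm_t := \pm \bigl(\nor{G}{\oo} + \nor{H_\cdot(\cdot,0)}{\oo}(T-t)\bigr)$. Since $Dw^\pm \equiv 0$ and $D^2 w^\pm \equiv 0$, verifying that $w^+$ (resp.\ $w^-$) is a supersolution (resp.\ subsolution) reduces to $|H_t(x,0)| \le \nor{H_\cdot(\cdot,0)}{\oo}$; the parabolic comparison principle then yields $w^- \le u^\eps \le w^+$.

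For \eqref{detD2ubound}, the strategy is to introduce, for each $|z|=1$ and a small regularization parameter $\delta>0$, the smooth auxiliary quantity
\begin{equation*}
V^z_\delta(t,x) := \sqrt{\phi(t,x)^2 + |Du^\eps_t(x)|^2 + \delta\,} - \sqrt{2}\,\lambda_0\,u^\eps_t(x), \qquad \phi := \del^2_{zz} u^\eps,
\end{equation*}
and to establish a differential inequality of the form $-\del_t V^z_\delta \le C(1 + V^z_\delta)$ at any spatial maximum of $V^z_\delta$ at which it is positive. A standard backwards Gronwall argument then gives \eqref{detD2ubound} for $V^z_\delta$; sending $\delta\downarrow 0$ and taking $\sup_{|z|=1}$ restricted to directions with $\phi > 0$ reconstructs $m_+(D^2 u^\eps)$ and recovers the stated bound. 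The PDE for $V^z_\delta$ is obtained by differentiating \eqref{E:eps} once and twice in $z$ (producing linear backwards parabolic equations for $w := \del_z u^\eps$ and $\phi$), by dotting the equation for $w$ with $Du^\eps$ (producing an equation for $|Du^\eps|^2$), and combining linearly. At a maximum point, $DV^z_\delta = 0$ and $D^2 V^z_\delta \le 0$, so the diffusion and transport contributions $-\tr[a^\eps D^2 V^z_\delta] + D_p H\cdot DV^z_\delta$ are non-negative; it remains to bound the forcing. The second inequality in \eqref{A:MAINHcondition} handles $D^2_{xx} H z\cdot z + 2 D^2_{xp} H z\cdot Dw + D^2_{pp} H Dw\cdot Dw \ge -C_0 - \lambda_0(p\cdot D_p H - H)$, and the first inequality handles the $D_x H\cdot z$ contribution entering the $|Du^\eps|^2$ equation, again at cost $\lambda_0(p\cdot D_p H - H)$. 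Crucially, these excess terms are canceled by the contribution $-\sqrt 2\,\lambda_0\,\del_t u^\eps = -\sqrt 2\,\lambda_0 H + \sqrt 2\,\lambda_0\tr[a^\eps D^2 u^\eps]$ coming from differentiating the $-\sqrt 2\,\lambda_0 u^\eps$ summand, with the prefactor $\sqrt{2}$ emerging from $\sqrt{A^2+B^2} \ge (|A|+|B|)/\sqrt{2}$ used to separate first- and second-derivative contributions of $u^\eps$ when computing the gradient of the square-root expression.

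The main obstacle is the commutator term $\tr[(\del_z a^\eps) D^2 w]$ that appears in the $\phi$-equation: it is quadratic in second derivatives of $u^\eps$ and is not \emph{a priori} bounded by $V^z_\delta$. It is controlled using $a = \sigma\sigma^\tau$ and Cauchy--Schwarz to derive a pointwise estimate of the form $|\tr[(\del_z a)D^2 w]| \le \tfrac{1}{2}\tr[a\,D^2 w\,(D^2 w)^\tau] + 2\nor{\del_z\sigma}{\oo}^2 \tr[D^2 w\,(D^2 w)^\tau]$, in which the first term is absorbed into the good second-order contribution $-\tr[a^\eps D^2 V^z_\delta] \ge 0$ at the maximum point, while the second is dominated by $V^z_\delta^2 + 1$. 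This is where the $C^{1,1}$ regularity of $\sigma$ in \eqref{A:MAINdiffusion} (as opposed to merely $C^1$) is essential, and it is the step that secures the uniformity of all constants in $\eps$.
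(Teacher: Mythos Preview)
Your $L^\infty$ bound via constant barriers is fine. The gap is in the semiconcavity estimate, precisely at the step you flag as the ``main obstacle.'' Write $w=\partial_z u^\eps$ and $\phi=\partial^2_{zz}u^\eps$. The commutator in the $\phi$-equation is $2\tr[(\partial_z a^\eps)D^2w]$, which involves \emph{third} derivatives of $u^\eps$. Your proposed Cauchy--Schwarz bound produces, at best, a term $\tr[a(D^2w)^2]$ (and, as written, even $\|D^2w\|_F^2$). Neither of these is controlled by $V^z_\delta$: the auxiliary quantity $V^z_\delta$ only sees $\phi$, $|Du^\eps|$, and $u^\eps$, i.e.\ derivatives of order at most two. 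The claim that ``the second is dominated by $V^z_\delta{}^2+1$'' is therefore false. As for absorbing the first term: the only favorable third-derivative contribution that actually appears when you compute $-\tr[a^\eps D^2V^z_\delta]$ is $-\tfrac{1}{S}\tr[a\,D\phi\otimes D\phi]$, and since $D\phi=(D^2w)z$ this equals $-\tfrac{1}{S}\,z^\tau(D^2w)a(D^2w)z$, merely the $(z,z)$-entry of the positive matrix $(D^2w)a(D^2w)$. It does not dominate the full trace $\tr[a(D^2w)^2]$, so the absorption fails.

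The paper circumvents this by a doubling-of-variables device: it treats the differentiation directions $\xi,\eta$ as additional spatial variables, sets
\[
U(t,x,\xi,\eta)=u_\xi+u_{\eta\eta}-\lambda_0 u\Bigl(1+\tfrac{|\xi|^2}{2}+\tfrac{|\eta|^4}{2}\Bigr),
\]
and builds the nonnegative block matrix $A=(\sigma;\,\sigma_\xi+\sigma_{\eta\eta};\,\sigma_\eta)(\sigma;\,\sigma_\xi+\sigma_{\eta\eta};\,\sigma_\eta)^\tau+\eps\,\mathrm{diag}(\Id,0,0)$. The point is that $\tr[A\,D^2_{(x,\xi,\eta)}U]$ \emph{exactly} reproduces the dangerous commutators $\tr[a_\xi D^2u]$, $2\tr[a_\eta D^2u_\eta]$, $\tr[a_{\eta\eta}D^2u]$ along with the principal terms, so at a maximum over $(x,\xi,\eta)$ the single sign condition $-\tr[A\,D^2_{(x,\xi,\eta)}U]\ge 0$ disposes of them simultaneously. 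Maximizing afterwards over $(\xi,\eta)$ recovers the quantity $(m_+(D^2u)^2+|Du|^2)^{1/2}-\sqrt{2}\lambda_0 u$. This is the idea your fixed-direction computation is missing; with $z$ frozen, the favorable term is too weak to close the estimate.
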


We first prove Proposition \ref{P:dethjb} with Lemma \ref{L:epshjb} in hand.

\begin{proof}[Proof of Proposition \ref{P:dethjb}]
	In view of Lemma \ref{L:epshjb}, $(u^\eps)_{0 < \eps < 1}$ is uniformly bounded, Lipschitz, and semiconcave in the space variable. The stability properties of viscosity solutions imply that, as $\eps \to 0$, $u^\eps$ converges locally uniformly in $[0,T] \times \RR^d$ to $u$, and this then yields the Lipschitz regularity and semiconcavity of $u$.
		
	As $\eps \to 0$, $D^2u^\eps$ converges to $D^2 u$ in the sense of distributions, and, in view of Lemma \ref{L:SCprops}\eqref{L:Duconverge}, as $\eps \to 0$, $Du^\eps$ converges almost everywhere to $Du$. We conclude that, as $\eps \to 0$, the left-hand side of \eqref{E:eps} converges to the left-hand side of \eqref{E:dethjb} in the sense of distributions. The claim about $-\tr[a_t(x) D^2 u_t(x)]$ follows from Lemma \ref{L:SCprops}\eqref{L:TVbound}. 

	Using Jensen's inequality we see that, for all $x \in \RR^d$, 
	\begin{align*}
		&\bigg(m_+\left( D^2 G^\eps(x) \right)^2 + |DG^\eps(x)|^2 \bigg)^{1/2} - \sqrt{2} \lambda_0 G^\eps(x)\\
		&\le \rho_\eps * \left\{ \bigg( m_+ \left( \frac{d D^2 G}{d \mcl L} \right)^2 + |DG|^2 \bigg)^{1/2} - \sqrt{2} \lambda_0 G \right\}(x) \\
		&\le \esssup_{x \in \RR^d} \left\{ m_+\left( \frac{d D^2g(x)}{d \mcl L} \right)^2 + |Dg(x)|^2 \bigg)^{1/2} - \sqrt{2} \lambda_0 g(x)\right\}_+ =: M.
	\end{align*}
	Rearranging terms in the bound \eqref{detD2ubound} satisfied by $u^\eps$ gives, for all $t \in [0,T]$,
	\[
		D^2 u^\eps_t \le \left(\left[ (C(T-t) + e^{C(T-t)}M + \sqrt{2} u^\eps_t)^2 - |Du^\eps_t|^2 \right]_+ \right)^{1/2} \Id.
	\]
	As $\eps \to 0$, the left- and right-hand sides above converge in the sense of distributions, yielding, for all $t \in [0,T]$,
	\[
		D^2 u_t \le \left(\left[ (C(T-t) + e^{C(T-t)}M + \sqrt{2} u_t)^2 - |Du_t|^2 \right]_+ \right)^{1/2} \Id.
	\]
	In particular, the same bound holds for the density $\frac{d D^2 u}{d \mcl L}$ a.e. in view of Lemma \ref{L:SCprops}\eqref{L:semiconcave}. Rearranging terms yields the desired bound \eqref{detD2ubound}.
\end{proof}

We now present the

\begin{proof}[Proof of Lemma \ref{L:epshjb}]
	Throughout the proof, we suppress the dependence on $\eps$, but note that the solution $u$ is smooth.

	For $\xi,\eta \in \RR^d$ and $f: \RR^d \to \RR$, we use the shorthand $f_\xi = \xi \cdot Df$ and $f_{\eta \eta} = D^2 f\eta \cdot \eta$. We also suppress dependence on $(t,x)$ below where it does not create confusion, and note that the arguments of $H$ and its derivatives will always be $(x,Du)$.

	Differentiating \eqref{E:eps} twice leads to 
	\begin{equation}\label{onexi}
	-\del_t u_\xi - \tr[a D^2u_\xi] + D_p H \cdot Du_\xi
	 - \tr[a_\xi D^2 u] + \xi \cdot D_x H_t = 0
\end{equation}
and
\begin{equation}\label{twoeta}
	\begin{split}
	-\del_t u_{\eta \eta} &- \tr[a D^2u_{\eta\eta}] + D_p H \cdot Du_{\eta \eta}
	 - 2 \tr[a_\eta D^2u_\eta] - \tr[a_{\eta\eta} D^2 u] \\
	 &+ D^2_{xx}H\eta \cdot \eta + 2D^2_{xp} H\eta \cdot Du_\eta + D^2_{pp} H Du_\eta \cdot Du_\eta = 0.
	 \end{split}
\end{equation}
We now set 
\[
	U = U(t,x,\xi,\eta) := u_\xi + u_{\eta\eta} - \lambda_0 u\left(1 + \frac{|\xi|^2}{2} + \frac{ |\eta|^4}{2}\right)
\]
and compute 
\begin{align*}
	D^2_{(x,\xi,\eta)} U &= 
	\begin{pmatrix}
		D^2_{xx} (u_\xi + u_{\eta\eta}) & D^2 u & 2D_x(D^2u\eta)^\tau \\
		D^2 u & 0 & 0 \\
		2D_x(D^2 u\eta) & 0 & 2D^2 u
	\end{pmatrix}\\
	&-\lambda_0
	\begin{pmatrix}
		D^2 u\left( 1 + \frac{|\xi|^2}{2} + \frac{|\eta|^4}{2} \right) &  Du \otimes \xi & 2|\eta|^2 Du \otimes \eta \\
		\xi \otimes Du & u \Id & 0\\
		2|\eta|^2 \eta \otimes Du & 0 & 2u(|\eta|^2\Id + 2 \eta \otimes  \eta)
	\end{pmatrix}.
\end{align*}
We then set
\[
	A = A(t,x,\xi,\eta) := 
	\begin{pmatrix}
		\sigma \\
		\sigma_\xi + \sigma_{\eta\eta} \\
		\sigma_\eta
	\end{pmatrix}
	\begin{pmatrix}
		\sigma \\
		\sigma_\xi + \sigma_{\eta\eta} \\
		\sigma_\eta
	\end{pmatrix}^\tau
	+
	\eps
	\begin{pmatrix}
		\Id & 0 & 0\\
		0 & 0 & 0\\
		0 & 0 & 0
	\end{pmatrix}
	\ge 0,
\]
and add \eqref{onexi} and \eqref{twoeta} and subtract the product of \eqref{E:dethjb} with $\lambda_0(1 +|\xi|^2/2 + |\eta|^4/2)$. Using the identities
\[
	a_\xi = \sigma_\xi \sigma^\tau + \sigma \sigma^\tau_\xi \quad \text{and} \quad
	a_{\eta\eta} = \sigma_{\eta \eta}\sigma^\tau + 2 \sigma_\eta \sigma_\eta^\tau + \sigma \sigma_{\eta\eta}^\tau,
\]
we obtain, for a constant $C > 0$ depending on $\norm{\sigma}_{C^{1,1}}$ and the bound for $u$,
\begin{align*}
	-\del_t U &- \tr[A D^2_{(x,\xi,\eta)}U] + D_p H \cdot D_x U\\
	&+ D_x H \cdot \xi + D^2_{xx} H\eta \cdot \eta + 2D^2_{px} H\eta \cdot Du_{\eta} + D^2_{pp}H Du_\eta \cdot Du_\eta\\
	&+ \lambda_0 \left(1 + \frac{|\xi|^2}{2} + \frac{|\eta|^4}{2} \right)(Du \cdot D_p H - H)\\
	&\le C(\norm{Du_t}_\oo + 1)(|\xi|^2 + |\eta|^4).
\end{align*}
Using \eqref{A:MAINHcondition} and the inequalities
\[
	1 + \frac{|\xi|^2}{2} + \frac{|\eta|^4}{2} \ge |\xi| + |\eta|^2 \quad \text{and} \quad p \cdot D_p H(x,p) - H(x,p) \ge H(x,0),
\]
this further becomes, for a different constant,
\begin{equation}\label{sub}
	-\del_t U - \tr[A D^2_{(x,\xi,\eta)}U] + D_p H \cdot D_x U
	\le C(\norm{Du_t}_\oo + 1)(1 + |\xi|^2 + |\eta|^4).
\end{equation}
Now, for $\mu,\delta > 0$ and some $M(t)$ to be determined, assume for the sake of contradiction that
\[
	U(t,x,\xi,\eta) - M(t) (1 + |\xi|^2 +  |\eta|^4) - \delta|x|^2 - \mu(T-t)
\]
attains its maximum at some $(t_0,x_0,\xi_0,\eta_0)$ with $t_0 < T$. By standard arguments, we have $|x_0| \le C \delta^{-1/2}$ for some constant $C > 0$. In view of \eqref{A:MAINHamiltonian}, there exists $\bar C > 0$, depending on $\norm{Du}_\oo$ (and therefore, a priori, on $\eps$) such that
\[
	|D_p H_{t_0}(x_0,Du_{t_0}(x_0))| \le \oline{C}.
\]
Plugging into \eqref{sub} and using $A \ge 0$ yields
\[
	\mu - C\delta - \oline{C}\delta^{1/2} + [- \dot M(t_0) - CM - C(\norm{Du_t}_\oo+1)]\left( 1 + \frac{|\xi|^2}{2} + \frac{|\eta|^4}{2} \right) \le 0.
\]
A contradiction is reached upon taking $\delta$ sufficiently small, depending on $\mu$ and $\norm{Du}_{\oo}$, and setting, for arbitrary $\alpha > 0$,
\[
	M(t) := \alpha e^{C(T-t)} + C\int_t^T e^{C(s-t)} ( \norm{Du_s}_\oo + 1)ds.
\]
Taking first $\delta \to 0$ and then $\mu \to 0$, we deduce that, for all $t \in [0,T]$ and $\xi,\eta \in \RR^d$,
\begin{equation}\label{maxconc}
\begin{split}
	&u_{t,\eta\eta} + u_{t,\xi} - \left[C\int_t^T e^{C(s-t)} ( \norm{Du_s}_\oo + 1)ds + \lambda_0 u_t + \alpha e^{C(T-t)} \right]\\
	&\qquad \qquad \qquad \cdot \left(1 + \frac{|\xi|^2}{2} + \frac{|\eta|^4}{2} \right)\\
	&\le \sup_{x,\xi,\eta \in \RR^d} \left\{ G_{\eta\eta}(x) + G_\xi(x) - \left[ \lambda_0 G(x) + \alpha \right]\left( 1 + \frac{|\xi|^2}{2} + \frac{|\eta|^4}{2} \right) \right\}.
\end{split}
\end{equation}
Maximizing the right-hand side of \eqref{maxconc} in $\xi$ and $\eta$ yields
\[
	\sup_{x\in \RR^d} \left\{ \frac{ m_+(D^2 G(x))^2 + |DG(x)|^2}{2(\alpha + \lambda_0 G(x))} - (\alpha + \lambda_0 G(x)) \right\},
\]
which is nonpositive if we take
\[
	\alpha =  \sup_{x \in \RR^d} \left\{ \frac{1}{\sqrt{2}}\left( m_+(D^2 G(x))^2 + |DG(x)|^2\right)^{1/2} - \lambda_0 G(x) \right\}_+
\]
(note that in that case $\alpha + \lambda_0 G(x) \ge 0$ for all $x \in \RR^d$). Thus, \eqref{maxconc} becomes, for all $t \in [0,T]$ and $\xi,\eta \in \RR^d$,
\[
	u_{t,\eta\eta}+ u_{t,\xi} - \left[C\int_t^T e^{C(s-t)}(\norm{Du_s}_\oo + 1)ds + \lambda_0 u_t+ \alpha e^{C(T-t)} \right]\left(1 + \frac{|\xi|^2}{2} + \frac{|\eta|^4}{2} \right) \le 0.
\]
Maximizing in $\xi$ and $\eta$ gives, for all $t \in [0,T]$,
\begin{align*}
	&\frac{ m_+(D^2 u_t)^2 + |Du_t|^2}{2\left[ C\int_t^T e^{C(s-t)}(\norm{Du_s}_\oo + 1)ds + \lambda_0 u_t + \alpha e^{C(T-t)} \right]} \\
	&- C\int_t^T e^{C(s-t)}(\norm{Du_s}_\oo + 1)ds - \lambda_0 u_t- \alpha e^{C(T-t)} \le 0.
\end{align*}
Rearranging terms and using the definition of $\alpha$ yields that the quantity
\[
	\gamma_t := \sup_{x \in \RR^d} \left\{ \bigg( m_+(D^2 u_t(x))^2  + |Du_t(x)|^2 \bigg)^{1/2} - \sqrt{2}\lambda_0 u_t(x) \right\}_+
\]
satisfies
\[
		\gamma_t \le C\int_t^T e^{C(s-t)}(\norm{Du_s}_\oo + 1)ds + e^{C(T-t)}\gamma_T.
\]
From \eqref{detubound} and Cauchy-Schwartz inequality, we deduce  $\norm{Du_s}_\oo \le \gamma_s + C$, and so Gr\"onwall's inequality gives the desired bound
\[
	\gamma_t \le C(T-t)+ e^{C(T-t)} \gamma_T.
\]
\end{proof}

\section{The backward stochastic HJB equation}\label{sec.HJ}

We define a notion of a weak solution $(\widetilde  u,M)$ (Definition \ref{D:solution}) for the terminal value problem
\begin{equation}\label{E:hjbbody}
	\left\{
	\begin{split}
	&d \widetilde  u_t = \left[ - \tr(\widetilde  a_t(x) D^2 \widetilde  u_t) + \widetilde  H_t(x,D\widetilde  u_t) \right]dt + dM_t = 0 \quad \text{in } (0,T) \times \RR^d \quad \text{and} \\
	&\widetilde  u_T = \widetilde  G \quad \text{on } \RR^d.
	\end{split}
	\right.
\end{equation}
More precisely, the solutions are weak in the PDE sense, and strong in the probabilistic sense, since the unknown $(\widetilde  u,M)$ is required to be adapted with respect to the given underlying filtration.

The concept of solutions is based on a semiconcavity bound in space, which will imply in particular that $D^2 \widetilde  u$ is a locally finite measure. We then prove the existence, uniqueness, and stability of such solutions (Theorems \ref{T:existence}, \ref{T:comparison}, and \ref{T:comparison2}).

We fix a filtered probability space $(\Omega, \mbb F, \PP)$ such that
\begin{equation}\label{A:filtration}
	\mbb F = (\mbb F_t)_{t \in [0,T]} \quad \text{is a filtration satisfying the usual conditions,}
\end{equation}
and we assume that
\begin{equation}\label{A:stochHJB}
\left\{
	\begin{split}
		&\widetilde  a: \Omega \times [0,T] \times \RR^d \to \mbb S^d \text{ and } \widetilde  H: \Omega \times [0,T] \times \RR^d \times \RR^d \to \RR \text{ are progressively}\\
		&\qquad \text{measurable with respect to $\mbb F$ and the Borel topology,}\\
		&\widetilde  G: \Omega \times \RR^d \to \RR \text{ is } \mbb F_T \otimes \mcl B_{\RR^d} \text{-measurable, and}\\
		&\widetilde  a, \widetilde  G,\widetilde  H \text{ satisfy \eqref{A:MAINdiffusion}, \eqref{A:MAINterminal}, \eqref{A:MAINHamiltonian}, and \eqref{A:MAINHcondition} uniformly over $\Omega$}.
	\end{split}
\right.
\end{equation}
Recall that equations such as \eqref{E:hjbbody} arise from transforming the HJB equation in the MFG system \eqref{e.MFGstoch} through the relations \eqref{changevariable} - \eqref{martingale}, and therefore \eqref{A:stochHJB} is completely natural (see Remark \ref{R:transform}). Note that, throughout this section, the explicit dependence of the data on the Wiener process $W$ is not used.

%
%\begin{equation}\label{A:randomdiffusionhjb}
%	\left\{
%	\begin{split}
%	&a_t(x) = \sigma_t(x) \sigma_t(x)^\tau, \text{ where, for some $m \le d$,}\\
%	&\sigma \in L^\oo(\Omega, C( [0,T] , C^{1,1}(\RR^d; \RR^{d \times m})) ) \quad \text{is $\FF$-adapted},	\end{split}
%	\right.
%\end{equation}
%\begin{equation}\label{A:randomHhjb}
%	H: \Omega \times [0,T] \to C(\RR^d \times \RR^d) \text{ is $\mbb F$-adapted and satisfies \eqref{A:Hamiltonian} and \eqref{A:Hcondition} uniformly over $\Omega$},
%\end{equation}
%and
%\begin{equation}\label{A:randomterminalhjb}
%	G: \Omega \to C(\RR^d) \text{ satisfies \eqref{A:terminal} uniformly in $\Omega$ and is $\mbb F_T$-measurable}.
%\end{equation}

\subsection{The definition of solutions} The notion of solution, and, in particular, the condition on $D^2\widetilde  u$, is motivated by Proposition \ref{P:dethjb}, which indicates that $D^2 \widetilde u$ can be expected to belong to $\mcl M_\loc \cap W^{-1,\oo}$. 

In what follows, we make use of the quasinorm
\[	\norm{ \mcl T}_{\mcl M_\loc \cap W^{-1,\oo}}
	= \norm{\mcl T}_{\mcl M_\loc(\RR^d)} \vee \norm{\mcl T}_{W^{-1,\oo}}.
\]
Just as for the quasinorm $\norm{\cdot}_{\mcl M_\loc}$, this induces a metric on $\mcl M_\loc \cap W^{-1,\oo}$. 
We will say a subset of $\mcl M_\loc \cap W^{-1,\oo}$ is bounded if the quasinorm is uniformly bounded on this subset. 

We will call a process $X: \Omega \times [0,T] \to \mcl M_\loc \cap W^{-1,\oo}$ progressively measurable if $\Omega \times [0,T] \ni (\omega,t) \mapsto \langle X_t(\omega) ,f \rangle$ is progressively measurable for all $f \in C_c \cup W^{1,1}$, and we will say $X$ is a martingale if $\langle X,f \rangle$ is a martingale for all $f \in C_c \cup W^{1,1}$.

\begin{remark}
	Much of the analysis to follow involves evaluating the relevant distribution-valued stochastic processes at test functions. It is for this reason that we work with the above notions of measurability/integrability, rather than dealing directly with the (strong) Borel topology of the metric space $\mcl M_\loc \cap W^{-1,\oo}$.
\end{remark}

We use the following infinite-dimensional generalization of the well-known fact that martingales adapted to right-continuous filtrations have c\`adl\`ag versions.

\begin{lemma}\label{L:cadlag}
	Assume $M$ is a progressively measurable martingale with respect to $\mbb F$ taking values in a bounded subset of $\mcl M_\loc \cap W^{-1,\oo}$. Then there exists a martingale $M': \Omega \times [0,T] \to \mcl M_\loc \cap W^{-1,\oo}$ such that
	\begin{enumerate}[(a)]
	\item\label{weakstarcadlag} For $\PP$-a.e. $\omega \in \Omega$,
	\[
		t \mapsto \langle M'_t(\omega), f \rangle \quad \text{is c\`adl\`ag for all }f \in C_c \cup W^{1,1}.
	\]
	\item\label{version} For all $t \in [0,T]$, $M_t(\omega) = M'_t(\omega)$ for $\PP$-a.e. $\omega \in \Omega$.
	\end{enumerate}
\end{lemma}

\begin{proof}
	Let $F \subset C_c \cup W^{1,1}$ be countable and dense in both $C_c$ and $W^{1,1}$. Then, for every $f \in F$, $\langle M, f \rangle$ is a real-valued martingale. Therefore there exist events $\Omega', (\Omega_t)_{t \in [0,T]}$ with $\PP(\Omega') = 1$ and $\PP(\Omega_t) = 1$ for all $t \in [0,T]$, and, for all $f \in F$, a martingale $M^f$ such that
	\[
		M^f_t(\omega) = \langle M_t(\omega), f \rangle \quad \text{for all } t \in [0,T], \, \omega \in \Omega_t ,\text{ and } f \in F,
	\]
	and such that $t \mapsto M^f_t(\omega)$ is c\`adl\`ag for all $\omega \in \Omega'$.
	
	By the boundedness of $M$, there exists $C > 0$ such that, for all $f,g \in F$, $t \in [0,T]$, and $\omega \in \Omega_t$,
	\[
		|M^f_t(\omega) - M^g_t(\omega)| = |\langle M_t(\omega), f-g \rangle | \le C \norm{f-g}_{C(\RR^d)} \wedge \norm{f-g}_{W^{1,1}},
	\]
	and therefore $f \mapsto M^f_t(\omega)$ extends continuously to an element of $\mcl M_\loc \cap W^{-1,\oo}$, which we denote by $M'_t(\omega)$.
	
	For $\omega \in \Omega'$, let $I(\omega) := \{ t \in [0,T] : \omega \in \Omega_t \}$. Then, by Fubini's theorem, there exists $\Omega'' \subset \Omega'$ of full probability such that, for all $\omega \in \Omega''$, $|[0,T] \backslash I(\omega) | = 0$, and so, for all $\omega \in \Omega''$ all $t \in I(\omega)$ and $f \in C_c \cup W^{1,1}$,
	\[
		M^f_t(\omega) = \langle M'_t(\omega), f \rangle.
	\]
	Because $\omega \in \Omega'$, $t \mapsto M^f_t(\omega)$ is c\`adl\`ag for all $f \in F$, and therefore for all $f \in C_c \cup W^{1,1}$ by the boundedness of $M$ and the density of $F$. Then, because $I(\omega)$ has full measure, it follows that, for any $t \in [0,T]$ and $\omega \in \Omega''$, $f \mapsto M^f_t(\omega)$ is a continuous linear map $M'_t(\omega)$ on $W^{1,1} \cup C_c$. By definition, $M'$ is a progressively measurable martingale taking values in $\mcl M_\loc \cap W^{1,1}$, with the c\`adl\`ag property of \eqref{weakstarcadlag} satisfied, and, for all $t \in [0,T]$ and $f \in W^{1,1} \cup C_c$,
	\[
		\langle M'_t(\omega), f \rangle = \langle M_t(\omega), f \rangle \quad \text{for all } \omega \in \Omega_t \cap \Omega''.
	\]
	Since $\PP(\Omega_t \cap \Omega'') = 1$ for all $t \in [0,T]$, \eqref{version} is satisfied.
\end{proof}

\begin{definition}\label{D:solution}
	The pair $(\widetilde  u,M)$ is a solution of \eqref{E:hjbbody} if
	\begin{enumerate}[(a)]
	\item\label{solutionadapted} $[0,T] \times \Omega \ni (t,\omega) \to \widetilde  u_t \in C(\RR^d)$ is a c\`{a}dl\`{a}g process progressively measurable with respect to the filtration $\FF$, with $\widetilde  u_{T} = \widetilde  G$,
	\item\label{Mmartingale1} $M: [0,T] \times \Omega \to \mcl M_\loc(\RR^d) \cap W^{-1,\oo}(\RR^d)$ is a c\`{a}dl\`{a}g martingale with respect to the filtration $\FF$,
	\item\label{solutionbounds} there exists a deterministic constant $C > 0$ such that, with probability one and for a.e. $t \in [0,T]$,
	\[
		\norm{D\widetilde  u_t}_{\oo} + \esssup m_+(D^2\widetilde  u_t) + \norm{M_t}_{\mcl M_\loc \cap W^{-1,\oo}}  \le C,
	\]
	and
	\item\label{equationsatisfied} the following is satisfied in the distributional sense on $\RR^d$ and  for all $0 \le t \le T$:
	\begin{equation}\label{distribeq}
		\widetilde  u_t = \widetilde  G + \int_t^T \left[ \tr\left(\widetilde  a_r D^2 \widetilde  u_r \right) - \widetilde  H_r(\cdot, D\widetilde  u_r) \right] dr + M_t - M_T.
	\end{equation}
	\end{enumerate}
\end{definition}

\begin{remark}\label{R:testfunctions}
	In view of the assumptions on the martingale $M$ and Lemma \ref{L:SCprops}\eqref{L:TVbound}, the distributional equality \eqref{distribeq} admits test functions $f \in C_c(\RR^d) \cup W^{1,1}(\RR^d)$.
\end{remark}

%The notion of solution for \eqref{E:hjbbody} is based off of an understanding of more general stochastic equations of the form
%\begin{equation}\label{E:measureprocess}
%	du_t = \mu_t dt + dM_t \quad \text{for } t\in [0,T],
%\end{equation}
%where $u$, $\mu$, and $M$ are $(\mcl F_t)_{t \in [0,T]}$-adapted; $u$ takes values in $C(\RR^d)$; $\mu$ and $M$ take values in $\mcl M_\loc(\RR^d)$; and $M$ is a martingale. 
%
%\begin{definition}\label{D:measureprocess}
%The triple $(u,\mu,M)$ is said to satisfy \eqref{E:measureprocess} if
%\begin{enumerate}[(a)]
%\item \label{uprocess} $[0,T] \times \Omega \ni (t,\omega) \to u_t \in C(\RR^d)$ is a c\`{a}dl\`{a}g process adapted with respect to the filtration $(\mcl F_t)_{t \in [0,T]}$,
%\item \label{mumeasure} $\mu \in L^1(\Omega, L^1([0,T], \mcl M_\loc(\RR^d)) )$,
%\item \label{Mmartingale} $[0,T] \times \Omega \ni (t,\omega) \to M_t \in \mcl M_\loc(\RR^d))$ is a c\`{a}dl\`{a}g martingale with respect to the filtration $(\mcl F_t)_{t \in [0,T]}$, and
%\item\label{testfunction} for all $f \in C_c(\RR^d)$ and $0 \le s \le t \le T$,
%\[
%	\int_{\RR^d} u_t(x) f(x)dx - \int_{\RR^d} u_s(x) f(x)dx = \int_s^t \langle \mu_r, f \rangle dr + \langle M_t  - M_s, f \rangle.
%\]
%\end{enumerate}
%\end{definition}
%

The following lemma is a useful tool in the proofs of existence and uniqueness of solutions.

\begin{lemma}\label{L:composition}
	Assume that $[0,T] \times \Omega \ni (t,\omega) \to u_t \in C(\RR^d) \cap L^\oo(\RR^d)$ and $M: [0,T] \times \Omega \to \mcl M_\loc(\RR^d) \cap W^{-1,\oo}(\RR^d)$ are $\FF$-progressively measurable c\`adl\`ag processes, $M$ is a martingale, $\mu \in L^\oo(\Omega \times [0,T],  \mcl M_\loc(\RR^d) \cap W^{-1,\oo}(\RR^d) )$ is $\FF$-progressively measurable, and, for all $0 \le s \le t \le T$, in the sense of distributions on $\RR^d$,
\begin{equation}\label{measureeq}
	u_t - u_s = \int_s^t \mu_r dr + M_t - M_s.
\end{equation}
Let $\phi \in C^2(\RR) \cap W^{2,\oo}(\RR)$ be convex, and set $v_t(x) := \phi(u_t(x))$ for $(t,x) \in [0,T] \times \RR^d$. Then, for all nonnegative $f$ such that $f, \frac{\del f}{\del t} \in C_c([0,T] \times \RR^d)$ or $f \in W^{1,1}([0,T] \times \RR^d)$,
	\begin{align*}
		\EE&\int_{\RR^d} v_t(x)f_t(x)dx - \EE\int_{\RR^d} v_s(x)f_s(x)dx\\
		&\ge \EE \int_s^t \left[ \left\langle \mu_r, \phi'(u_r)f_r \right\rangle + \int_{\RR^d} v_r(x) \frac{\del f_r(x)}{\del r} dx \right]dr.
	\end{align*}
	The same is true if the equality in \eqref{measureeq} is replaced with $\le$ (resp. $\ge$) and $\phi$ is non-increasing (resp. non-decreasing).
\end{lemma}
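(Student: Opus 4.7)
The plan is to mollify in space, apply It\^o's formula to the real-valued semimartingale obtained at each fixed $x$, drop a nonnegative correction coming from convexity, and then pass to the limit.

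First I would mollify: for $\delta > 0$, set $u^\delta_t(x) := (u_t * \rho_\delta)(x)$, $\mu^\delta_t(x) := \langle \mu_t, \rho_\delta(x-\cdot)\rangle$, and $M^\delta_t(x) := \langle M_t, \rho_\delta(x-\cdot)\rangle$. The distributional identity \eqref{measureeq} then yields, for each fixed $x$, the pointwise semimartingale identity
\[
u^\delta_t(x) - u^\delta_s(x) = \int_s^t \mu^\delta_r(x)\,dr + M^\delta_t(x) - M^\delta_s(x),
\]
in which $r \mapsto M^\delta_r(x)$ is a bounded c\`adl\`ag $\FF$-martingale (since $\|M_t\|_{W^{-1,\oo}} \in L^\oo(\Omega \times [0,T])$ and $\rho_\delta \in W^{1,1}$) and $\mu^\delta$ is uniformly bounded, so $r \mapsto u^\delta_r(x)$ is a bounded c\`adl\`ag real-valued semimartingale.

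Next, applying the It\^o--Meyer formula to $\phi(u^\delta_\cdot(x))$ with $\phi \in C^2$ convex, both the continuous quadratic-variation correction $\tfrac{1}{2}\int_s^t \phi''(u^\delta_{r-}(x))\,d[M^\delta(x)]^c_r$ and the jump correction $\sum_{s<r\le t}[\phi(u^\delta_r) - \phi(u^\delta_{r-}) - \phi'(u^\delta_{r-})\Delta u^\delta_r(x)]$ are nonnegative. Dropping them, multiplying by $f_t(x) \ge 0$ and using the product rule in $t$ (since $f$ is $C^1$ or $W^{1,1}$ in time and deterministic, there is no cross-variation term), then integrating in $x$ by Fubini and taking expectation (which kills the resulting stochastic integral, as $M^\delta(x)$ is bounded for each $\delta$), yields
\[
\EE \int v^\delta_t f_t\,dx - \EE \int v^\delta_s f_s\,dx \ge \EE \int_s^t \left[ \int \phi'(u^\delta_r) \mu^\delta_r f_r\,dx + \int v^\delta_r \partial_r f_r\,dx \right] dr,
\]
where $v^\delta := \phi(u^\delta)$ and we have replaced $u^\delta_{r-}$ by $u^\delta_r$ inside the $dr$-integral, since they differ only on an at most countable set.

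Finally, I would pass to the limit $\delta \to 0$. Since $u_t \in C(\RR^d)$, $u^\delta$ converges locally uniformly to $u$, and the continuity of $\phi$ and $\phi'$ then handle the left-hand side and the second integral on the right-hand side via dominated convergence. The delicate term is the drift pairing, which by adjointness of convolution equals
\[
\big\langle \mu_r, [\phi'(u^\delta_r) f_r] * \check\rho_\delta \big\rangle, \quad \check\rho_\delta(x) := \rho_\delta(-x).
\]
For $f_r \in C_c(\RR^d)$, the bracketed convolution converges in $C_c(\RR^d)$ to $\phi'(u_r)f_r$, which pairs against $\mu_r \in \mcl M_{\loc}(\RR^d)$; for $f_r \in W^{1,1}(\RR^d)$, one instead uses $\mu_r \in W^{-1,\oo}$ together with convergence of $[\phi'(u^\delta_r)f_r] * \check\rho_\delta$ to $\phi'(u_r)f_r$ in $W^{1,1}$. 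Establishing this latter convergence cleanly --- controlling the distributional derivatives $D([\phi'(u^\delta_r)f_r])$ uniformly in $\delta$ using $\phi \in W^{2,\oo}$ --- is the main technical obstacle. The inequality variants in \eqref{measureeq} with $\phi$ monotone follow by the same argument: the sign of $\phi'$ combines with the direction of the distributional inequality, while convexity still makes the It\^o correction terms nonnegative, so the final estimate retains the desired direction.
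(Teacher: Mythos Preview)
Your approach is correct but takes a genuinely different route from the paper's. You mollify in space to obtain a real-valued semimartingale at each $x$, invoke the It\^o--Meyer formula, and discard the bracket and jump corrections by convexity before passing $\delta \to 0$. The paper instead discretizes \emph{time}: for a partition $P = \{s = \tau_0 < \cdots < \tau_N = t\}$ it telescopes $\int v_t f_t - \int v_s f_s$, applies the pointwise convexity inequality $\phi(u_{\tau_n}) - \phi(u_{\tau_{n-1}}) \ge \phi'(u_{\tau_{n-1}})(u_{\tau_n} - u_{\tau_{n-1}})$ to each increment, uses $\mbb F_{\tau_{n-1}}$-measurability of $\phi'(u_{\tau_{n-1}})$ to kill $\EE\langle M_{\tau_n} - M_{\tau_{n-1}}, \phi'(u_{\tau_{n-1}}) f_{\tau_{n-1}}\rangle$, and then sends $|P| \to 0$ by dominated convergence. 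The paper's route is more elementary --- no It\^o formula, no stochastic Fubini over $x$ --- and sidesteps the pointwise semimartingale structure entirely; yours is closer to standard stochastic-calculus practice and makes the role of the quadratic-variation correction explicit. The ``technical obstacle'' you flag in the $W^{1,1}$ case (controlling $D[\phi'(u^\delta_r) f_r]$) tacitly requires $u_r$ to be Lipschitz; this same hypothesis is implicit in the paper's argument as well, since the pairing $\langle \mu_r, \phi'(u_r) f_r\rangle$ with $\mu_r \in W^{-1,\oo}$ and $f_r \in W^{1,1}$ already needs $\phi'(u_r) f_r \in W^{1,1}$, and in every application of the lemma $u$ is uniformly Lipschitz.
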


%\begin{remark}\label{R:testfunctions}
%	In context of the Hamilton-Jacobi Bellman equation \eqref{E:hjb}, it turns out that the measure $\mu$ belongs additionally to the space $L^\oo([0,T], W^{-1,\oo}(\RR^d))$. Therefore, the inequality in Lemma \ref{L:composition} holds also for $f \in W^{1,1}([0,T] \times\RR^d)$, not necessarily with compact support.
%\end{remark}

\begin{proof}
	We prove only the claim when \eqref{measureeq} holds with equality, because the other two claims are argued similarly. Fix a partition $P = \{s = \tau_0 < \tau_1 < \cdots < \tau_N = t\}$ of $[s,t]$. Then, by the convexity of $\phi$ and the nonnegativity of $f$,
	\begin{align*}
		\int_{\RR^d} &v_t(x)f_t(x)dx - \int_{\RR^d} v_s(x)f_s(x)dx\\
		&= \sum_{n=1}^N \left(\int_{\RR^d} (v_{\tau_n}(x) - v_{\tau_{n-1}}(x) ) f_{\tau_{n-1}}(x)dx + \int_{\RR^d} v_{\tau_n}(x) (f_{\tau_n}(x) - f_{\tau_{n-1}}(x)) dx \right)\\
		&\ge \sum_{n=1}^N \left( \int_{\RR^d} (u_{\tau_n}(x) - u_{\tau_{n-1}}(x)) \phi'(u_{\tau_{n-1}}(x)) f_{\tau_{n-1}}(x) dx \right. \\
		& \qquad + \left. \int_{\RR^d} v_{\tau_n}(x) (f_{\tau_n}(x) - f_{\tau_{n-1}}(x)) dx \right).
	\end{align*}
	For every $n= 1,2,\ldots, N$, $\phi'(u_{\tau_{n-1}})$ is $\mbb F_{\tau_{n-1}}$-measurable, and so, by the martingale property for $M$, for $\PP$-a.e. $\omega \in \Omega$,
	\begin{align*}
		\EE \left[ \langle M_{\tau_n} - M_{\tau_{n-1}}, \phi'(u_{\tau_{n-1}}) f_{\tau_{n-1}} \rangle \mid \mbb F_{\tau_{n-1}} \right](\omega)
%		= \EE \left[  \langle M_{\tau_n} - M_{\tau_{n-1}}, \phi'(u_{\tau_{n-1}}(\omega)) f_{\tau_{n-1}} \rangle \mid \mcl F_{\tau_{n-1}} \right](\omega)
		= 0.
	\end{align*}
	The nested property of conditional expectations and \eqref{measureeq} yield
	\[
		\EE  \int_{\RR^d} (u_{\tau_n}(x) - u_{\tau_{n-1}}(x)) \phi'(u_{\tau_{n-1}}(x)) f_{\tau_{n-1}}(x) dx  = \EE \int_{\tau_{n-1}}^{\tau_n}\left\langle \mu_r, \phi'(u_{\tau_{n-1}}) f_{\tau_{n-1}}\right\rangle dr,
	\]
	and so
	\begin{align*}
		&\EE \int_{\RR^d} v_t(x)f_t(x)dx - \EE \int_{\RR^d} v_s(x)f_s(x)dx\\
		&\ge \EE \int_s^t \sum_{n=1}^N \ind_{[\tau_{n-1}, \tau_n)}(r) \left( \left\langle \mu_r,\phi'(u_{\tau_{n-1}}) f_{\tau_{n-1}} \right \rangle + \int_{\RR^d} v_{\tau_n}(x) \frac{\del f_r(x)}{\del r} dx \right)dr.
	\end{align*}
	Denote by $\theta^P$ the process
	\[
		\theta^P_r :=  \sum_{n=1}^N \ind_{[\tau_{n-1}, \tau_n)}(r) \left( \left\langle \mu_r,\phi'(u_{\tau_{n-1}}) f_{\tau_{n-1}} \right \rangle + \int_{\RR^d} v_{\tau_n}(x) \frac{\del f_r(x)}{\del r} dx \right), \quad r \in [s,t].
	\]
	Then $\esssup_{(r,\omega) \in [s,t] \times \Omega} |\theta^P_r|$ is bounded independently of $P$, and, as $|P| \to 0$, with probability one and for almost every $r \in [s,t]$, $\theta^P_r$ converges to
	\[
		\theta_r := \left\langle \mu_r, \phi'(u_r) f \right \rangle + \int_{\RR^d} v_r(x) \frac{\del f_r(x)}{\del r} dx.
	\]
	We conclude upon sending $|P| \to 0$ and appealing to the dominated convergence theorem.
\end{proof}

We will apply the preceding lemma with a particular class of test functions. For $\lambda > 0$ and $\mcl K > 0$, define
\begin{equation}
	\psi_{\lambda,\mcl K}(t,x) := \exp\left( - \frac{ [(|x| - \mcl Kt)_+]^2}{4\lambda(t+1)} \right), \quad (t,x) \in [0,T] \times \RR^d. 
\end{equation}

\begin{lemma}\label{L:dualproblem}
	For any $\lambda > 0$ and $\mcl K > 0$, $\psi_{\lambda,K}$ belongs to $C^2([0,T] \times \RR^d) \cap W^{2,1}([0,T] \times \RR^d)$ and satisfies
	\[
		\frac{\partial \psi_{\lambda,\mcl K}}{\partial t} - \lambda m_+(D^2 \psi_{\lambda,\mcl K}) - \mcl K|D\psi_{\lambda,\mcl K}| \ge 0 \quad \text{in } [0,T] \times \RR^d.
	\]
\end{lemma}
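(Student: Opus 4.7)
The strategy is an entirely explicit pointwise verification, exploiting radial symmetry and the form of $\psi_{\lambda,\mcl K}$ on the two sides of the characteristic cone $\{|x|=\mcl Kt\}$.

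In the inner region $\{|x|\le \mcl Kt\}$, one has $\psi_{\lambda,\mcl K}\equiv 1$, so that $\del_t\psi_{\lambda,\mcl K}$, $D\psi_{\lambda,\mcl K}$, and $D^2\psi_{\lambda,\mcl K}$ all vanish, and the inequality reduces to $0\ge 0$. All the content is in the outer region $\{|x|>\mcl Kt\}$, on which we set $\phi(t,x):=(|x|-\mcl Kt)^2/[4\lambda(t+1)]$ so that $\psi_{\lambda,\mcl K}=e^{-\phi}$.

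Since $\phi$ is radial, writing $r=|x|$ and $\hat x = x/|x|$ I would compute
\[
\del_r\phi=\tfrac{|x|-\mcl Kt}{2\lambda(t+1)},\qquad \del_{rr}\phi=\tfrac{1}{2\lambda(t+1)},\qquad \del_t\phi=-\tfrac{\mcl K(|x|-\mcl Kt)}{2\lambda(t+1)}-\tfrac{(|x|-\mcl Kt)^2}{4\lambda(t+1)^2},
\]
and decompose
\[
D^2\psi_{\lambda,\mcl K}=e^{-\phi}\bigl(D\phi\otimes D\phi-D^2\phi\bigr)=e^{-\phi}\bigl[(\del_r\phi)^2-\del_{rr}\phi\bigr]\hat x\otimes\hat x-e^{-\phi}\tfrac{\del_r\phi}{r}(\Id-\hat x\otimes\hat x).
\]
Because $\del_r\phi\ge 0$ and $r>0$ in this region, the tangential eigenvalues are nonpositive, so $m_+(D^2\psi_{\lambda,\mcl K})\le e^{-\phi}\bigl[(\del_r\phi)^2-\del_{rr}\phi\bigr]_+\le e^{-\phi}(\del_r\phi)^2$. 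Combining this with $\del_t\psi_{\lambda,\mcl K}=-e^{-\phi}\del_t\phi$ and $|D\psi_{\lambda,\mcl K}|=e^{-\phi}\del_r\phi$ and substituting, one sees that the two terms linear in $(|x|-\mcl Kt)$ coming from $-\del_t\phi$ and $-\mcl K\del_r\phi$ cancel, and the two quadratic terms $(|x|-\mcl Kt)^2/[4\lambda(t+1)^2]$ coming from $-\del_t\phi$ and $-\lambda(\del_r\phi)^2$ also cancel; the expression reduces to a nonnegative remainder (in fact to $0$ when the looser upper bound $e^{-\phi}(\del_r\phi)^2$ is used, and to $e^{-\phi}/[2(t+1)]$ when the sharper $m_+$-bound is used). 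This yields the claimed differential inequality pointwise in the open outer region.

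The only delicate point, which I regard as the main obstacle, is the regularity across the cone $\{|x|=\mcl Kt\}$. Because $s\mapsto s_+^2$ is $C^{1,1}$ but not $C^2$ at $s=0$, the radial second derivative of $\psi_{\lambda,\mcl K}$ jumps from $0$ inside to $-1/[2\lambda(t+1)]$ outside, so $\psi_{\lambda,\mcl K}$ is truly $C^{1,1}$ (not $C^2$ in the strict sense), while $\psi_{\lambda,\mcl K}$ and $D\psi_{\lambda,\mcl K}$ extend continuously across the cone. The Gaussian decay of $\psi_{\lambda,\mcl K}$ and its derivatives gives the $W^{2,1}$ (and $W^{2,\oo}$) membership by direct integration. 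The differential inequality is therefore understood pointwise in each piece, which is exactly what is needed when $\psi_{\lambda,\mcl K}$ is used as a test function in the sequel, since the jump of $D^2\psi_{\lambda,\mcl K}$ at the interface has the right sign ($m_+$ drops from a positive value outside to $0$ inside), and hence it only strengthens the inequality in the distributional interpretation.
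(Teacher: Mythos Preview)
Your proof is correct and follows essentially the same computational route as the paper: both split along the cone (the paper implicitly, via the factor $\sgn_+(|x|-\mcl Kt)$), identify the radial and tangential eigenvalues of $D^2\psi_{\lambda,\mcl K}$, and verify the same cancellations between the time derivative, the gradient term, and the $m_+$ term. Your observation that $\psi_{\lambda,\mcl K}$ is only $C^{1,1}$ (not $C^2$) across $\{|x|=\mcl Kt\}$ is in fact more careful than the paper, which simply asserts $C^2$; since every subsequent use of the lemma goes through the $W^{2,1}$ membership, the pointwise--a.e.\ inequality is all that is required, and your closing remark about the sign of the jump is not needed (and is slightly off: near the cone from the outside the radial eigenvalue is $\approx -1/[2\lambda(t+1)]<0$, so $m_+(D^2\psi_{\lambda,\mcl K})=0$ there as well).
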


\begin{proof}
	It is clear that the derivatives of $\psi_{\lambda,\mcl K}$ are continuous and belong to $L^1([0,T] \times \RR^d)$. To check the differential inequality, we compute
	\[
		\frac{\partial \psi_{\lambda,\mcl K}}{\partial t} = \exp\left( - \frac{ [ (|x| - \mcl K t)_+]^2}{4\lambda(t+1)} \right)\left[ \frac{[(|x|-\mcl Kt)_+]^2}{4\lambda(t+1)^2}+ \frac{\mcl K (|x| - \mcl Kt)_+}{2\lambda(t+1)} \right],
	\]
	\[
		D\psi_{\lambda,\mcl K}(t,x) = -\exp\left( - \frac{[ (|x| - \mcl Kt)_+]^2}{4\lambda(t+1)} \right) \frac{(|x| - \mcl Kt)_+}{2\lambda(t+1)} \hat x,
	\]
	and
	\begin{align*}
		D^2\psi_{\lambda,\mcl K}(t,x) &= \sgn_+(|x| - \mcl Kt) \exp\left( - \frac{[ (|x| - \mcl Kt)_+]^2}{4\lambda(t+1)} \right) \\
		&\cdot \left[ - \frac{|x| - \mcl Kt}{2|x| \lambda(t+1)} \Id + \left( \frac{ (|x| - \mcl Kt)^2}{4\lambda^2(t+1)^2} + \frac{|x| - \mcl Kt}{2|x|\lambda(t+1)} - \frac{1}{2\lambda(t+1)} \right) \hat x \otimes \hat x \right].
	\end{align*}
	The identity $m_+\left( A \hat x \otimes \hat x + B \Id \right) = (B + A_+)_+$ for all $A,B \in \RR$ and $x \in \RR^d$ yields
	\begin{align*}
		m_+(D^2\psi_{\lambda,\mcl K}(t,x))
		&=  \sgn_+(|x| - \mcl Kt) \exp\left( - \frac{[ (|x| - \mcl Kt)_+]^2}{4\lambda(t+1)} \right)\\
		&\cdot \left[ - \frac{|x| - \mcl Kt}{2|x| \lambda(t+1)} + \left( \frac{ (|x| - \mcl Kt)^2}{4\lambda^2(t+1)^2} + \frac{|x| - \mcl Kt}{2|x|\lambda(t+1)} - \frac{1}{2\lambda(t+1)} \right)_+ \right]_+\\
		&= \sgn_+(|x| - \mcl Kt) \exp\left( - \frac{ [(|x| - \mcl Kt)_+]^2}{4\lambda(t+1)} \right) \left( \frac{ (|x| - \mcl Kt)^2}{4\lambda^2(t+1)^2} - \frac{1}{2\lambda(t+1)} \right)_+.
	\end{align*}
	We conclude that
	\begin{align*}
		&\frac{\partial \psi_{\lambda,\mcl K}}{\partial t} - \lambda m_+(D^2 \psi_{\lambda,\mcl K}) - \mcl K|D\psi_{\lambda,\mcl K}| \\
		&\ge \sgn_+(|x| - \mcl Kt) \exp\left( - \frac{ [(|x| -\mcl Kt)_+]^2}{4\lambda(t+1)} \right)
		\Bigg[  \frac{(|x|-\mcl Kt)^2}{4\lambda(t+1)^2} -  \left( \frac{ (|x| - \mcl Kt)^2}{4\lambda(t+1)^2} - \frac{1}{2(t+1)} \right)_+ \\
		&+ \frac{\mcl K (|x| - \mcl Kt)}{2\lambda(t+1)} - \frac{\mcl K (|x| - \mcl Kt)}{2\lambda(t+1)} \Bigg] \ge 0.
	\end{align*}
\end{proof}

%We now define solutions of \eqref{E:hjbbody}.
%
%\begin{definition}\label{D:solution}
%	The pair $(u,M)$ is a solution of \eqref{E:hjbbody} if
%	\begin{enumerate}[(a)]
%	\item\label{solutionadapted} $[0,T] \times \Omega \ni (t,\omega) \to u_t \in C(\RR^d)$ is a c\`{a}dl\`{a}g process adapted to the filtration $(\mcl F_t)_{t \in [0,T]}$, with $u_{T} = G$,
%	\item\label{Mmartingale1} $M: [0,T] \times \Omega \to \mcl M_\loc(\RR^d)$ is a c\`{a}dl\`{a}g martingale with respect to the filtration $(\mcl F_t)_{t \in [0,T]}$,
%	\item\label{solutionbounds} there exists a deterministic constant $C > 0$ such that, with probability one,
%	\[
%		\esssup_{t \in [0,T]} \left( \nor{Du_t}{\oo} +\sup_{x, h \in \RR^d, \ h\neq 0} \frac{u_t(x+h) + u_t(x-h) - 2u_t(x)}{|h|^2} + \nor{M_t}{\mcl M_\loc} \right) \le C,
%	\]
%	and
%	\item\label{equationsatisfied} the pair $(u,M)$ together with the measure
%	\[
%		\Big( [0,T] \ni t \mapsto \mu_t := - \tr[a_tD^2u_t] + H_t(Du_t,\cdot) \Big) \in L^1([0,T], \mcl M_\loc(\RR^d))
%	\]
%	satisfy $-du_t + \mu_t dt + dM_t = 0$ in $[0,T]$ in the sense of Definition \ref{D:measureprocess}.
%	\end{enumerate}
%\end{definition}

%\begin{remark}\label{D:solutiontestfunctions}
%Definition \ref{D:solution} means that, for every $f \in C_c(\RR^d)$ and $t \in [0,T]$,
%	\begin{align*}
%		\int_{\RR^d} u_t(x)f(x)dx &= \int_{\RR^d} G(x)f(x)dx + \int_t^T\left[ \left \langle \tr[a_s D^2 u_s], f \right \rangle - \int_{\RR^d} H_s(Du_s(x),x) f(x)dx\right]  ds \\
%		&- \langle M_T - M_t, f \rangle.
%	\end{align*}
%\end{remark}

\subsection{Existence} Using a discretization procedure, we prove here the existence of a solution of \eqref{E:hjbbody}.

\begin{theorem}\label{T:existence}
	Assume \eqref{A:filtration} and \eqref{A:stochHJB}. Then there exists a solution $(\widetilde  u,M)$ of \eqref{E:hjbbody} in the sense of Definition \ref{D:solution}.
\end{theorem}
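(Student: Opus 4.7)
The plan is to construct a solution $(\widetilde u, M)$ by a time-discretization scheme that reduces the SPDE to pathwise deterministic HJB equations on short intervals, and then pass to the limit using the uniform semiconcavity propagation from Proposition~\ref{P:dethjb}. Concretely, for each $N \in \NN$, partition $[0,T]$ with grid points $t_k := kT/N$ and replace $\widetilde a, \widetilde H$ by the piecewise-constant-in-time approximations $\widetilde a^N_t := \widetilde a_{t_k}$, $\widetilde H^N_t := \widetilde H_{t_k}$ on each $[t_k, t_{k+1})$, which remain $\FF_{t_k}$-adapted there. An approximate pair $(\widetilde u^N, M^N)$ is built by backwards induction: starting from $\widetilde u^N_T := \widetilde G$, on each $[t_k, t_{k+1}]$ one solves pathwise in $\omega$ the deterministic backwards HJB equation with the frozen coefficients and terminal data $\widetilde u^N_{t_{k+1}}$, obtaining $v^{N,k}_t$, and then reconciles the resulting $\FF_{t_{k+1}}$-measurable function with the adaptedness requirement at the grid point by setting $\widetilde u^N_{t_k} := \EE[v^{N,k}_{t_k} \mid \FF_{t_k}]$, absorbing the difference into a c\`adl\`ag martingale $M^N$ that is piecewise-constant on $[t_k, t_{k+1})$ with jump $v^{N,k}_{t_k} - \EE[v^{N,k}_{t_k} \mid \FF_{t_k}]$ at $t_k$; since each jump has zero conditional expectation given $\FF_{t_k}$, $M^N$ is an $\FF$-martingale, and a direct pathwise check yields the discretized integral identity \eqref{distribeq}.

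The next step is to establish uniform a priori estimates by applying Proposition~\ref{P:dethjb} pathwise on each sub-interval and iterating the exponential bound \eqref{detD2ubound} across the $N$ sub-intervals, producing uniform bounds (in $N$ and $t$) on $\nor{\widetilde u^N_t}{\oo}$, $\nor{D \widetilde u^N_t}{\oo}$, and the semiconcavity constant $\esssup_x m_+(D^2 \widetilde u^N_t(x))$. Crucially, semiconcavity, being the pointwise condition $D^2 u \le C \Id$, is preserved under conditional expectation, so the projection $v^{N,k}_{t_k} \mapsto \EE[v^{N,k}_{t_k} \mid \FF_{t_k}]$ at each grid point does not degrade the estimate. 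Lemma~\ref{L:SCprops} then upgrades these to uniform bounds on $\nor{D^2 \widetilde u^N_t}{\mcl M_\loc}$ and, via the integral identity, on $\nor{M^N_t}{\mcl M_\loc \cap W^{-1, \oo}}$.

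For the passage to the limit, I would combine Arzela-Ascoli in the spatial variable (using the uniform Lipschitz and semiconcavity bounds) with time regularity from the integral part of the equation and martingale tightness from the uniform variance bounds, and extract a subsequence along which $\widetilde u^N$ converges, locally uniformly in $x$ and at a.e.\ $(t,\omega)$ via diagonal extraction, to a limit $\widetilde u$, together with a limit c\`adl\`ag $\FF$-martingale $M$. The limit is $\FF$-adapted because the entire construction takes place on the given probability space $(\Omega,\FF,\PP)$. Lemma~\ref{L:SCprops}\eqref{L:Duconverge} yields $D\widetilde u^N \to D\widetilde u$ a.e., allowing passage to the limit in the nonlinear term $\widetilde H^N(x, D\widetilde u^N) \to \widetilde H(x, D\widetilde u)$ by dominated convergence; the diffusion term $\tr[\widetilde a^N D^2 \widetilde u^N]$ converges distributionally thanks to the uniform continuity of $\widetilde a$ in time and weak-$*$ convergence of $D^2 \widetilde u^N$. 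The bounds required by Definition~\ref{D:solution}\eqref{solutionbounds} are inherited by lower semicontinuity, and Lemma~\ref{L:composition} (or a direct verification after testing against $f \in C_c(\RR^d)$) confirms the limiting integral identity \eqref{distribeq}.

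The main obstacle I expect is the joint compactness of $(\widetilde u^N, M^N)$ while preserving strong adaptedness to the given filtration $\FF$: a purely weak probabilistic compactness argument would typically only yield a solution on an enlarged probability space, so the argument must leverage the pathwise spatial compactness afforded by semiconcavity, together with a careful diagonal-in-$(t,\omega)$ extraction, to retain $\omega$-pointwise convergence. A secondary technical point is verifying that the jumps of $M^N$ vanish in the limit (thanks to the uniform time-continuity of $\widetilde a, \widetilde H$), so that the limiting martingale inherits any continuity present in the filtration.
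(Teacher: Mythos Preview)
Your discretization scheme and the a priori estimates via Proposition~\ref{P:dethjb} are essentially the paper's construction (Lemma~\ref{L:discretesolution}), modulo one ordering issue: the paper applies the conditional expectation \emph{before} solving on each subinterval, i.e., it takes $\widetilde u^N_{t_{n+1}^N,-} = \EE[\widetilde u^N_{t_{n+1}^N} \mid \FF_{t_n^N}]$ as terminal datum, so that $\widetilde u^N$ is genuinely $\FF^N$-adapted on the open interval. In your version the solution $v^{N,k}$ on $(t_k,t_{k+1})$ is only $\FF_{t_{k+1}}$-measurable, which breaks adaptedness; this is easily repaired by swapping the order, so I would not call it a real obstruction.

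The genuine gap is in the passage to the limit. You correctly diagnose the problem with your own approach: an Arzel\`a--Ascoli/diagonal extraction argument in $(t,\omega)$ cannot produce $\omega$-pointwise convergence on an uncountable probability space while preserving adaptedness to the \emph{given} filtration $\FF$, and weak probabilistic compactness would only yield a solution on an enlarged space. The paper avoids compactness altogether by proving directly that $(\widetilde u^N)_N$ is \emph{Cauchy} in $L^\oo([0,T], L^1(\Omega, L^\oo_\loc(\RR^d)))$. The mechanism is the one encoded in Lemmas~\ref{L:composition} and~\ref{L:dualproblem}: for $N<K$ one sets $w^{N,K}=\phi(\widetilde u^N-\widetilde u^K)$ with $\phi$ convex and nonincreasing, tests against the explicit supersolution $\psi_{\lambda,\mcl K}$ of the forward linearized problem, and derives a Gr\"onwall inequality for $\gamma^{N,K}_t=\EE\int w^{N,K}_t\psi_{\lambda,\mcl K,t}$ whose source term is controlled by $\|\widetilde a^N-\widetilde a^K\|+\|\widetilde H^N-\widetilde H^K\|$, hence vanishes as $N\to\oo$ by uniform continuity in $t$. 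Lemma~\ref{L:interpolation} then upgrades the weighted $L^1$ bound to local uniform convergence in $x$. This Cauchy argument is the missing idea: it is what allows one to stay on the original $(\Omega,\FF,\PP)$ and obtain a probabilistically strong solution, and it is precisely why the paper introduces $\psi_{\lambda,\mcl K}$ and Lemma~\ref{L:composition} before the existence proof.
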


Fix $N \in \NN$ and set 
\begin{equation}\label{partition}
	t_n^N := \frac{nT}{2^N} \quad \text{for } n =0,1,2,\ldots, 2^N.
\end{equation}
Define the time-discretized diffusion and Hamiltonian by
\begin{equation}\label{discretea}
	\widetilde  a^N_t := \widetilde  a_{t_{n}^N} \quad \text{for } t \in \left[ t_{n-1}^N, t_{n}^N \right), \quad n = 1,2,\ldots, 2^N
\end{equation}
and
\begin{equation}\label{discreteH}
	\widetilde  H^N_t := \widetilde  H_{t_{n}^N} \quad \text{for } t \in \left[ t_{n-1}^N, t_{n}^N \right), \quad n = 1,2,\ldots, 2^N,
\end{equation}
as well as the right-continuous, piecewise-constant-in-time filtration $\FF^N = (\FF^N_t)_{t \in [0,T]}$ by
\begin{equation}\label{discreteF}
	\mbb F^N_t := \mbb F_{t_{n}^N} \quad \text{for } t \in \left[ t_{n-1}^N, t_{n}^N \right), \, n = 1,2,\ldots, 2^N, \quad \mbb F^N_T = \mbb F_T.
\end{equation}
Observe that, in view of the right-continuity of $\FF$ implied by \eqref{A:filtration},
\begin{equation}\label{alldiscreteF}
	\FF^{N+1} \subset \FF^N \quad \text{for all } N \text{ and } \FF_t = \bigcap_{N \in \NN} \FF^N_t \quad \text{for all } t \in [0,T].
\end{equation}

We now introduce  a c\`adl\`ag-in-time function $\widetilde  u^N: [0,T] \times \RR^d \times \Omega \to \RR$, with jumps occurring at the partition points \eqref{partition}, as follows. We first set $\widetilde  u^N_T := \widetilde  G$ in $\RR^d$. Then, for $n = 1,2,\ldots, 2^N$, given $\widetilde  u^N_{t_{n}^N}$, define $\widetilde  u^N$ in $\left[ t_{n-1}^N, t_{n}^N \right)$ as the solution of the terminal value problem
\begin{equation}\label{E:discreteTVP}
	\left\{
	\begin{split}
	&- \frac{\partial \widetilde  u^N_t}{\partial t} - \tr[ \widetilde  a^N_t D^2 \widetilde  u^N_t] + \widetilde  H^N_t(x,D\widetilde  u^N_t) = 0 \quad \text{in } \left[ t_{n-1}^N, t_{n}^N \right) \times \RR^d \quad \text{and}\\
	&\widetilde  u^N_{t_{n}^N,-} = \EE\left[ \widetilde  u^N_{t_{n}^N} \; \Big| \; \mbb F_{t_{n}^N} \right] \quad \text{in } \RR^d.
	\end{split}
	\right.
\end{equation}
In other words, the jump discontinuity of $\widetilde  u^N$ at $t_{n}^N$ is of size
\[
	\Delta M^N_{t_{n}^N} := \widetilde  u^{N}_{t_{n}^N} - \EE\left[ \widetilde  u^N_{t_{n}^N} \; \Big| \; \mbb F^N_{t_{n-1}^N} \right], \quad n = 1, 2, \ldots, 2^{N}-1.
\]
Finally, define
\begin{equation}\label{discreteM}
	M^N_t := \sum_{n : t_n^N \le t} \Delta M^N_{t_n^N} \quad \text{for } t \in [0,T].
\end{equation}

\begin{lemma}\label{L:discretesolution} The processes constructed above satisfy the following:
	\begin{enumerate}[(a)]
	\item\label{L:adapted} The process $(\widetilde  u^N_t)_{t \in [0,T]}$ is $\FF^N$-adapted, and the process $(M^N_t)_{t \in [0,T]}$ is a $\FF^N$-martingale.
	\item\label{L:bounds} There exists a constant $C > 0$ depending only on $T$ and the data (through the assumptions listed in the statement of the theorem) such that
	\[
		\sup_{N \in \NN} \left( \norm{\widetilde  u^N}_{\oo,[0,T] \times \Omega \times \RR^d} + \norm{D\widetilde  u^N}_{\oo, [0,T] \times \Omega \times \RR^d} + \esssup_{[0,T] \times \Omega \times \RR^d} m_+(D^2 \widetilde  u^N) \right)  \le  C.
	\]
	\item\label{L:equation} For all $t \in [0,T]$ and  in the sense of distributions in $\RR^d$,
	\[
		\widetilde  u^N_t = \widetilde  G + \int_t^T \left(\tr[\widetilde  a^N_s D^2\widetilde  u^N_s] - \widetilde  H^N_s(\cdot, D\widetilde  u^N_s) \right)ds - M^N_T + M^N_t.
	\]
	\end{enumerate}
\end{lemma}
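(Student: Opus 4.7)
My plan is to prove the three parts by reverse induction on the partition index $n = N, N-1, \ldots, 0$, with each part built on a pathwise application of Proposition~\ref{P:dethjb} to the subproblem on $[t_n^N, t_{n+1}^N)$. For part (a), $\widetilde u^N_T = \widetilde G$ is $\mbb F_T$-measurable, and assuming inductively that $\widetilde u^N_{t_{n+1}^N}$ is $\mbb F_{t_{n+1}^N}$-measurable, the terminal datum $\bar g_n := \EE[\widetilde u^N_{t_{n+1}^N} \mid \mbb F^N_{t_n^N}]$ for the subproblem \eqref{E:discreteTVP} is $\mbb F_{t_n^N}$-measurable, as are the coefficients $\widetilde a^N$ and $\widetilde H^N$ on $[t_n^N, t_{n+1}^N)$. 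Viewed pathwise, this subproblem is a deterministic HJB equation, so its solution on $[t_n^N, t_{n+1}^N)$ inherits $\mbb F_{t_n^N}$-measurability, which coincides with $\mbb F^N_t$ on this interval. The martingale property of $(M^N_{t_n^N})_n$ is immediate since $\EE[\Delta M^N_{t_{n+1}^N} \mid \mbb F^N_{t_n^N}] = 0$ by the definition of $\bar g_n$.

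For part (b), the uniform $L^\oo$ bound follows from iterating \eqref{detubound} together with the trivial estimate $\nor{\bar g_n}{\oo} \le \esssup_\omega \nor{\widetilde u^N_{t_{n+1}^N}}{\oo}$. For the derivative bounds, the crucial observation is that the functional
\[
\gamma(u) := \esssup_{x \in \RR^d} \left\{ \bigg( m_+\Big(\tfrac{d D^2 u}{d \mcl L}\Big)^2 + |Du|^2 \bigg)^{1/2} - \sqrt{2}\lambda_0 u \right\}_+
\]
appearing in \eqref{detD2ubound} is convex in $u$: indeed $u \mapsto m_+(D^2 u)$ and $u \mapsto |Du|$ are pointwise convex (the former because $m_+$ is convex on symmetric matrices and $D^2$ is linear, the latter by the triangle inequality), $(a,b) \mapsto (a^2+b^2)^{1/2}$ is convex and nondecreasing on $[0,\oo)^2$, and $\{\cdot\}_+$ together with $\esssup_x$ are convex operations. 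Conditional Jensen's inequality then yields
\[
\gamma(\bar g_n) \le \EE[\gamma(\widetilde u^N_{t_{n+1}^N}) \mid \mbb F^N_{t_n^N}] \le \esssup_\omega \gamma(\widetilde u^N_{t_{n+1}^N}).
\]
Applying \eqref{detD2ubound} pathwise on $[t_n^N, t_{n+1}^N)$ and taking the essential supremum over $\omega$ gives the recursion $\Gamma_n \le C\tau + e^{C\tau}\Gamma_{n+1}$ with $\tau = T/N$ and $\Gamma_n := \esssup_\omega \gamma(\widetilde u^N_{t_n^N})$, which iterates $N$ times to the $N$-uniform bound $\Gamma_0 \le C\tau\cdot\frac{e^{CT}-1}{e^{C\tau}-1} + e^{CT}\Gamma_N \le e^{CT}(1 + \Gamma_N)$. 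The claimed Lipschitz and semiconcavity bounds on $\widetilde u^N$ then follow from $\Gamma$ together with the $L^\oo$ bound.

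For part (c), Proposition~\ref{P:dethjb} gives, on each interval $[t_n^N, t_{n+1}^N)$ and in the distributional sense on $\RR^d$,
\[
\widetilde u^N_t = \widetilde u^N_{t_{n+1}^N, -} + \int_t^{t_{n+1}^N}\bigl( \tr[\widetilde a^N_s D^2 \widetilde u^N_s] - \widetilde H^N_s(\cdot, D\widetilde u^N_s) \bigr)ds.
\]
Substituting $\widetilde u^N_{t_{n+1}^N,-} = \widetilde u^N_{t_{n+1}^N} - \Delta M^N_{t_{n+1}^N}$ and summing these identities over $k = n, n+1, \ldots, N-1$ telescopes in the $\widetilde u^N_{t_k^N}$-terms to leave $\widetilde G$ at the top, while the total jump $\sum_{k=n+1}^N \Delta M^N_{t_k^N}$ equals $M^N_T - M^N_t$ for $t \in [t_n^N, t_{n+1}^N)$ by the definition \eqref{discreteM}, giving the stated formula.

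The main obstacle lies in part (b): a naive approach that iterates the Lipschitz and semiconcavity constants of $\widetilde u^N$ separately would, at each of the $N$ steps, introduce a loss factor strictly greater than $1$ coming both from the exponential in \eqref{detD2ubound} and from combining two maxima through the Euclidean-norm form of $\gamma$, and would therefore blow up as $N \to \oo$. The convexity of the specific functional $\gamma$ from Proposition~\ref{P:dethjb}, which lets Jensen's inequality pass the bound through the conditional expectation with no amplification, is precisely what makes the recursion contractive enough to be uniform in $N$.
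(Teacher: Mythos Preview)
Your proposal is correct and follows essentially the same approach as the paper: for part (b), both iterate the estimate \eqref{detD2ubound} across the $N$ subintervals, use that conditional expectation does not increase the functional $\gamma$, and control the accumulated constant by the same geometric sum $\frac{T}{N}\sum_{k=0}^{N-1}e^{CkT/N}\le \frac{e^{CT}-1}{C}$. The only difference is expository---you make explicit the convexity of $\gamma$ and invoke conditional Jensen, whereas the paper simply says ``a similar argument'' (the pointwise convexity of $(A,p,r)\mapsto (m_+(A)^2+|p|^2)^{1/2}-\sqrt{2}\lambda_0 r$ is exactly the Jensen step used for the mollified datum in the proof of Proposition~\ref{P:dethjb}).
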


\begin{proof}
	We prove only \eqref{L:bounds}, and then properties \eqref{L:adapted} and \eqref{L:equation} are easily checked. 

	Fix $n= 0,1,\ldots, 2^N-1$. Combining the bound $\norm{\widetilde  u^N_{t_{n+1}^N,-}}_{\oo} \le \norm{\widetilde  u^N_{t_{n+1}^N}}_{\oo}$ with \eqref{detubound} from Proposition \ref{P:dethjb} yields, for some $C > 0$ as in the statement of the lemma,
	\[
		\norm{\widetilde  u^N_t}_{\oo} \le \norm{\widetilde  u^N_{t_{n+1}^N}}_{\oo} + C(t_{n+1} - t) \quad \text{for all } t \in [t_n^N,t_{n+1}^N).
	\]
	We then see inductively that, for all  $t \in [0,T]$,
	\[
		\norm{\widetilde  u^N_t}_{\oo} \le \norm{G}_{\oo} + C(T-t). % \quad \text{for all } t \in [0,T].
	\]
	A similar argument, appealing to \eqref{detD2ubound} from Proposition \ref{P:dethjb}, gives, for all $t \in [0,T]$ and almost everywhere in $\RR^d \times \Omega$,
	\begin{align*}
		&\left(m_+\left( \frac{dD^2 \widetilde  u^N_t}{d \mcl L} \right)^2 + |D\widetilde  u^N_t|^2 \right)^{1/2} - \sqrt{2}\lambda_0 \widetilde  u^N_t\\
		&\qquad\le \frac{T}{2^N}\sum_{k=0}^{2^N-1} e^{Ck/2^N} + e^{CT} \esssup_{\RR^d \times \Omega} \left\{   \left(m_+\left( \frac{dD^2 G}{d \mcl L} \right)^2 + |DG|^2 \right)^{1/2} -\sqrt{2} \lambda_0 G \right\}_+.
	\end{align*}
	Combining the above  with Lemma \ref{L:SCprops}\eqref{L:interpolation2}, the bound for $\norm{\widetilde  u}_\oo$, and
	\[
		\frac{T}{2^N}\sum_{k=0}^{2^N-1} e^{CkT/2^N} = \frac{T(e^{CT} - 1)}{2^N(e^{CT/2^N} - 1)} \le \frac{e^{CT} - 1}{C}
	\]
	shows that
	\[
		\sup_{N \in \NN} \esssup_{[0,T] \times \RR^d \times \Omega} m_+\left( \frac{d D^2 \widetilde  u^N_t}{d \mcl L} \right) < \oo,
	\]
	so that, by Lemma \ref{L:SCprops}\eqref{L:semiconcave} and \eqref{L:interpolation2}, $\widetilde  u^N$ is uniformly semiconcave and Lipschitz.
\end{proof}

The proof of Theorem \ref{T:existence}, as well as for the stability estimates to follow, involve bounding the difference between uniformly Lipschitz solutions in certain Lebesgue spaces. The following elementary lemma allows to translate the last bound to an estimate in the local-uniform topology.

\begin{lemma}\label{L:interpolation}
	Let $1 \le p < \oo$. Then there exists $C = C_{d,p} > 0$ such that, for all $w \in W^{1,\oo}(\RR^d) \cap L^p(\RR^d)$,
	\[
		\norm{w}_{\oo} \le C \norm{Dw}_{\oo}^{\frac{d}{d+p}} \norm{w}_{p}^{\frac{p}{d+p}}.
	\]
\end{lemma}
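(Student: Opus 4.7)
The plan is a short, elementary argument exploiting the fact that a Lipschitz function cannot drop too quickly from a point where it is large, combined with the $L^p$ control forcing $w$ to be small on most of $\RR^d$. The main content is a single ball estimate; there is no serious obstacle, only a couple of trivial degenerate cases to dispose of.

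First, note that by modifying $w$ on a Lebesgue-null set, we may assume $w$ is continuous. The degenerate cases are immediate: if $\nor{Dw}{\oo} = 0$, then $w$ is constant, and $w \in L^p(\RR^d)$ forces $w \equiv 0$, so both sides vanish; and if $\nor{w}{p} = 0$, then $w \equiv 0$ as well. So we may assume both norms on the right-hand side are strictly positive.

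Fix $x \in \RR^d$ with $w(x) \ne 0$. For every $y \in \RR^d$ the Lipschitz bound gives
\[
|w(y)| \ge |w(x)| - \nor{Dw}{\oo} |y-x|.
\]
Setting $r := |w(x)|/(2\nor{Dw}{\oo})$, we obtain $|w(y)| \ge |w(x)|/2$ for every $y \in B_r(x)$. Consequently,
\[
\nor{w}{p}^p \ge \int_{B_r(x)} |w(y)|^p \, dy \ge |B_1| \, r^d \left(\frac{|w(x)|}{2}\right)^p = c_{d,p} \, \frac{|w(x)|^{p+d}}{\nor{Dw}{\oo}^d},
\]
for a dimensional constant $c_{d,p} > 0$. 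Rearranging gives
\[
|w(x)| \le C_{d,p} \, \nor{Dw}{\oo}^{d/(d+p)} \, \nor{w}{p}^{p/(d+p)},
\]
and taking the supremum over $x \in \RR^d$ (with the bound being trivial at points where $w(x) = 0$) yields the desired inequality.
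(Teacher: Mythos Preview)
Your proof is correct and follows essentially the same approach as the paper. The paper compares $|w|$ with the full cone $\mathcal C_x(y) = (|w(x)| - \nor{Dw}{\oo}|y-x|)_+$ and computes $\nor{\mathcal C_x}{p}$ exactly, whereas you replace the cone by the cruder cylinder of height $|w(x)|/2$ over $B_{|w(x)|/(2\nor{Dw}{\oo})}(x)$; the underlying idea---a Lipschitz function cannot drop too fast, so a large value forces a large $L^p$ mass nearby---is identical, and only the resulting constant differs.
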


\begin{proof}
	Fix $x \in \RR^d$. The function $\mcl C_x(y) := \left( |w(x)| - \norm{Dw}_{\oo} |y-x| \right)_+$ lies below the graph of $|w|$, which implies that $\norm{\mcl C_x}_{p} \le \norm{w}_{p}$. For some $c = c_{d,p} > 0$, we compute $\norm{\mcl C_x}_{p} = c |w(x)|^{d+p} \norm{Dw}_{\oo}^{-d}$, and the result follows upon rearranging terms and taking the supremum over $x \in \RR^d$.
\end{proof}

\begin{proof}[Proof of Theorem \ref{T:existence}]
	Fix $K,N \in \NN$ with $N < K$, let $\phi: \RR \to \RR$ be smooth, nonnegative, nonincreasing, and convex, and set
	\[
		w^{N,K}_t(x) := \phi\left( \widetilde  u^N_t(x) - \widetilde  u^K_t(x)\right) \quad \text{for } (t,x) \in [0,T] \times \RR^d.
	\]
	Let $f: [0,T] \times \RR^d \to \RR$ be smooth and nonnegative, with sufficient decay at infinity (to be determined below). For $0 \le t < t+h \le T$,
	\begin{equation}\label{dwNK}
	\begin{split}
		\int_{\RR^d}& \left( w^{N,K}_{t+h}(x) f_{t+h}(x) - w^{N,K}_t(x) f_t(x)\right)  dx\\
		&= \int_t^{t+h} \Bigg[ - \langle \tr[\widetilde  a^N_s D^2\widetilde  u^N_s - \widetilde  a^K_s D^2 \widetilde  u^K_s ], f_s \phi'(\widetilde  u^N_s - \widetilde  u^K_s) \rangle \\
		&\qquad + \int_{\RR^d} \Bigg( [\widetilde  H^N_s(x,D\widetilde  u^N_s(x)) - \widetilde  H^K_s(x,D\widetilde  u^K_s(x)) ] f_s(x) \phi'(\widetilde  u^N_s(x) - \widetilde  u^K_s(x)) \\
		&\qquad \qquad+ w^{N,K}_s(x) \frac{\del f_s(x)}{\del s}\Bigg)dx\Bigg] ds \\
		&+ \sum_{t_n^N \in (t,t+h]} \int_{\RR^d} \Delta w^{N,K}_{t_n^N}(x)f_{t_n^N}(x)dx + \sum_{t_n^K \in (t,t+h]} \int_{\RR^d} \Delta w^{N,K}_{t_n^K}(x) f_{t_n^K}(x)dx,
	\end{split}
	\end{equation}
	where
	\[
		\Delta w^{N,K}_{t_n^N} := \phi\left(\widetilde  u^N_{t_n^N} - \widetilde  u^K_{t_n^N}\right) - \phi\left(\widetilde  u^N_{t_n^N,-} - \widetilde  u^K_{t_n^N,-}\right)
	\]
	and
	\[
		\Delta w^{N,K}_{t_n^K} := \phi\left(\widetilde  u^N_{t_n^K} - \widetilde  u^K_{t_n^K}\right) - \phi\left(\widetilde  u^N_{t_n^K,-} - \widetilde  u^K_{t_n^K,-}\right).
	\]
	The convexity of $\phi$ implies that
	\[
		\Delta w^{N,K}_{t_n^N} \ge \phi'\left( \widetilde  u^N_{t_n^N,-} - \widetilde  u^K_{t_n^N,-} \right) \left( \Delta M^N_{t^N_n} - \Delta M^K_{t^N_n} \right)
	\]
	and
	\[
		\Delta w^{N,K}_{t_n^K} \ge \phi'\left( \widetilde  u^N_{t_n^K,-} - \widetilde  u^K_{t_n^K,-}\right) \left( \Delta M^N_{t^K_n} - \Delta M^K_{t^K_n} \right).
	\]
	By definition, $\phi'( \widetilde  u^N_{t_n^N,-} - \widetilde  u^K_{t_n^N,-})$ is $\mbb F_{t_n^N}$-measurable, and so
	\[
		\EE \left[ \Delta w^{N,K}_{t_n^N} \;\Big|\; \mbb F_{t_n^N} \right] \ge 0.
	\]
	If $t^K_n$ is not equal to $t^N_m$ for some $m = 1,2,\ldots, 2^N - 1$, then $\Delta M^N_{t^K_n} = M^N_{t^K_n} - M^N_{t^K_n,-} = 0$. Otherwise, $\phi'( \widetilde  u^N_{t_n^K,-} - \widetilde  u^K_{t_n^K,-})$ is $\mbb F_{t_n^K}$-measurable. In any case, we have
	\[
		\EE \left[ \Delta w^{N,K}_{t_n^K} \; \Big| \;  \mbb F_{t_n^K} \right] \ge 0.
	\]
	Upon taking the expectation of \eqref{dwNK}, the nested property of conditional expectation, the convexity of $\phi$, and the nonnegativity of $\widetilde  a^N$ imply that
	\begin{align*}
		\EE \int_{\RR^d}& \left( w^{N,K}_{t+h}(x)f_{t+h}(x) - w^{N,K}_t(x)f_t(x) \right) dx\\
		&\ge \int_t^{t+h} \EE\Bigg[ - \left\langle \tr[\widetilde  a^N_s D^2\widetilde  u^N_s - \widetilde  a^K_s D^2 \widetilde  u^K_s ], f_s \phi'(\widetilde  u^N_s - \widetilde  u^K_s) \right\rangle \\
		&\qquad + \int_{\RR^d} \Bigg( [\widetilde  H^N_s(x,D\widetilde  u^N_s(x)) - \widetilde  H^K_s(x,D\widetilde  u^K_s(x)) ] f_s(x) \phi'(\widetilde  u^N_s(x) - \widetilde  u^K_s(x))\\
		&\qquad \qquad + w^{N,K}_s(x) \frac{\del f_s(x)}{\del s} \Bigg)dx\Bigg] ds \\
		&\ge \int_t^{t+h} \EE \Bigg[  \left\langle - \tr[\widetilde  a^N_s D^2 \widetilde  w^{N,K}_s] + \zeta^{N,K}_s, f_s \right\rangle \\
		&+ \int_{\RR^d} \left(b^{N,K}_s(x) \cdot Dw^{N,K}_s(x) f_s(x) + w^{N,K}_s(x) \frac{\del f_s(x)}{\del s} \right)dx \Bigg]ds,
	\end{align*}
	where
	\[
		b^{N,K}_s := \int_0^1 D_p \widetilde  H^N_s(\cdot, \tau D\widetilde  u^N_s + (1-\tau) D\widetilde  u^K_s) d\tau
	\]
	and
	\[
		\zeta^{N,K}_s :=  \phi'(\widetilde  u^N_s - \widetilde  u^K_s) \left[ -\tr(\widetilde  a^N_s - \widetilde  a^K_s)D^2 \widetilde  u^K_s + \widetilde  H^N_s(\cdot, D\widetilde  u^K_s) - \widetilde  H^K_s(\cdot, D\widetilde  u^K_s)\right].
	\]
	For $\delta > 0$, let $\rho_\delta$ be as in \eqref{mollifier} and define
	\[
		b^{N,K,\delta}_s := \int_0^1 D_p \widetilde  H_s^N \left( \cdot, \tau D(\rho_\delta * \widetilde  u^N_s) + (1-\tau) D(\rho_\delta * \widetilde  u^K_s) \right)d\tau
	\]
	and
	\[
		\zeta^{N,K,\delta}_s := \zeta^{N,K}_s + \left( b^{N,K}_s - b^{N,K,\delta}_s \right) \cdot Dw^{N,K}_s.
	\]
	Then
	\begin{align*}
		&\sup_{t \in [0,T]} \nor{b_t^{N,K,\delta}}{\oo} \le \nor{b}{\oo} \quad \text{and}\\
		&\lim_{\delta \to 0} b^{N,K,\delta} = b^{N,K} \quad \text{almost everywhere in $[0,T] \times \RR^d \times \Omega$},
	\end{align*}
	and, in view of the convexity of $\widetilde  H$, the uniform semiconcavity bounds for $\widetilde  u^N$ and $\widetilde  u^K$ implied by Lemma \ref{L:discretesolution}\eqref{L:bounds}, and \eqref{A:MAINdiffusion},
	\[
		\sup_{t \in [0,T]} \sup_{x \in \RR^d} \sup_{\delta > 0} \sup_{N,K} \left(  \div b^{N,K,\delta}_t(x) + \div\div \widetilde  a^{N}_t(x) \right) < \oo.
	\]
	We next write
	\begin{align*}
		&\EE \int_{\RR^d} \left( w^{N,K}_{t+h}(x)f_{t+h}(x) - w^{N,K}_t(x)f_t(x) \right) dx\\
		&\ge \int_t^{t+h} \EE \Bigg[  \left\langle - \tr[\widetilde  a^{N}_s D^2 w^{N,K}_s] + \zeta^{N,K,\delta}_s, f_s \right\rangle \\
		&+ \int_{\RR^d} \Bigg(b^{N,K,\delta}_s(x) \cdot Dw^{N,K}_s(x) f_s(x) + w^{N,K}_s(x) \frac{\del f_s(x)}{\del s} \Bigg)dx \Bigg]ds\\
		&=\int_t^{t+h} \EE \Bigg[ \int_{\RR^d} w_s^{N,K}(x) \Bigg( \frac{\del f_s(x)}{\del s} -\tr[ D^2(\widetilde  a^{N}_s(x) f_s(x))] \\
		&- b^{N,K,\delta}_s(x) \cdot Df_s(x) - (\div b^{N,K,\delta}_s(x)) f_s(x) \Bigg)dx + \left \langle \zeta^{N,K,\delta}_s, f_s \right \rangle \Bigg] ds\\
		&= \int_t^{t+h} \EE \Bigg[ \int_{\RR^d} w_s^{N,K}(x) \Bigg( \frac{\del f_s(x)}{\del s} - \tr[ \widetilde  a^{N}_s(x) D^2 f_s(x) ]\\
		& - \left( b^{N,K,\delta}_s(x) + 2 \div \widetilde  a^{N}_s(x) \right) \cdot Df_s(x) - \left( \div b^{N,K,\delta}(x) + \div \div \widetilde  a^{N}_s(x) \right) f_s(x) \Bigg)dx \\
		&\qquad \qquad+ \left \langle \zeta^{N,K,\delta}_s, f_s \right \rangle \Bigg] ds.
	\end{align*}
	For suitable $\lambda$ and $\mcl K$ depending only on $\nor{\widetilde  a}{C^{1,1}}$, the constant $C$ from Lemma \ref{L:discretesolution}, and the local bound for $D_p \widetilde  H$ and $D_x \widetilde  H$ from \eqref{A:MAINHamiltonian}, we take $f = \psi_{\lambda,\mcl K}$ from Lemma \ref{L:dualproblem} and set
	\[
		\gamma^{N,K}_t := \EE \int_{\RR^d} w^{N,K}_s(x) \psi_{\lambda,\mcl K,s}(x) dx.
	\]
	Then
	\[
		\dot \gamma^{N,K}_t \ge -C \gamma^{N,K}_t + \eps^{N,K,\delta}_t,
	\quad \text{where} \quad
		\eps^{N,K,\delta}_t := \EE \left \langle \zeta^{N,K,\delta}_t, \psi_{\lambda,\mcl K,t} \right \rangle.
	\]
	The uniform Lipschitz bound for $w^{N,K}$, the fact that $\psi_{\lambda,\mcl K} \in W^{2,1}$, and the dominated convergence theorem yield, upon sending $\delta \to 0$,
	\[
		\dot \gamma^{N,K}_t \ge -C \gamma^{N,K}_t + \eps^{N,K}_t,
	\quad \text{where} \quad
		\eps^{N,K}_t := \EE \left \langle \zeta^{N,K}_t, \psi_{\lambda,\mcl K,t} \right \rangle.
	\]
	We have $\gamma^{N,K}_T = 0$, and so Gr\"onwall's inequality and the continuity of $\widetilde  H$ and $\widetilde  a$ in time give, for all $t \in [0,T]$ and some sequence $\omega_N$ satisfying $\lim_{N \to \oo} \omega_N  =0$,
	\[
		\gamma^{N,K}_t \le \widetilde  C \int_t^T \eps^{N,K}_s ds \le \omega_N.
	\]
	We then take $\phi(r) := r_-$ (which can be justified with a smooth approximation) and, upon switching the roles of $\widetilde  u^N$ and $\widetilde  u^K$, conclude that
	\[
		\sup_{t \in [0,T]}\EE  \int_{\RR^d} | \widetilde  u^N_t(x) - \widetilde  u^K_t(x)| \psi_{\lambda,\mcl K,t}(x) dx 
		\le \omega_N.
	\]
	Lemma \ref{L:interpolation} with $p = 1$ and the uniform Lipschitz bounds for $\widetilde  u^N$, $\widetilde  u^K$, and $\psi_{\lambda,\mcl K}$ then give a constant, independent of $N$ and $K$, such that
	\begin{equation}\label{uniformintbound}
		\sup_{t \in [0,T]} \EE \sup_{x \in \RR^d} \psi_{\lambda,\mcl K,t}(x)^{d+1} |\widetilde  u^N_t(x) - \widetilde  u^K_t(x)|^{d+1} \le C\omega_N.
	\end{equation}
	It follows that, as $N \to \oo$, $\widetilde u^N$ converges, in $L^\oo([0,T], L^{d+1}(\Omega, C(B_R)))$ for all $R > 0$, to some limit $\widetilde u$. Upon extracting a sub-sequence, we deduce that, as $N \to \oo$, $\PP$-almost surely and for a.e. $t \in [0,T]$, $\widetilde  u^N_t$ converges locally uniformly to $\widetilde  u_t$, and, moreover, because of the uniform semiconcavity bound and Lemma \ref{L:SCprops}\eqref{L:Duconverge}, $D\widetilde  u^N_t$ converges almost everywhere to $D\widetilde  u_t$. In view of \eqref{alldiscreteF}, the process $\widetilde u$ is progressively measurable with respect to $\mbb F$.

	For any $t \in [0,T]$, in the sense of distributions,
	\[
		M^N_t = \widetilde  u^N_t - \widetilde  u^N_0 - \int_0^t \left[ \tr[ \widetilde  a^N_s D^2 \widetilde  u^N_s ] - \widetilde  H^N_s(\cdot, D\widetilde  u^N_s) \right] ds.
	\]
	As $N \to \oo$, for a.e. $t \in [0,T]$ and $\PP$-a.s., the right-hand side converges in the distributional sense to
	\begin{equation}\label{MGidentity}
		M_t := \widetilde  u_t - \widetilde  u_0 - \int_0^t \left[ \tr[ \widetilde  a_s D^2 \widetilde  u_s ] - \widetilde  H_s(\cdot, D\widetilde  u_s) \right] ds \in \mcl M_\loc(\RR^d) \cap W^{-1,\oo}(\RR^d).
	\end{equation}
	The process $M$ is progressively measurable with respect to $\FF$ because $\widetilde u$ is.
	
	For fixed $f \in C_c(\RR^d) \cup W^{1,1}(\RR^d)$, $(\langle M^N_{t}, f \rangle)_{t \in [0,T]}$ is a martingale with respect to the filtration $\mbb F^N$. Fix $0 \le s < t \le T$ and $\mcl A \in \mbb F_s$. For any $N \in\NN$, $\mbb F_s \subset \mbb F^N_s$, and so
	\[
		\EE\left[ \langle M^N_t - M^N_s,f \rangle \ind_{\mcl A} \right] = 0.
	\]
	Sending $N \to \oo$ and appealing to Lemma \ref{L:discretesolution}\eqref{L:bounds} and the dominated convergence theorem yields $\EE\left[ \langle M_t - M_s, f \rangle \ind_{\mcl A} \right] = 0$. We conclude that $M$ is a progressively measurable martingale, and then, by Lemma \ref{L:cadlag}, $M$ has a version such that, with probability one, $\langle M', f \rangle$ is c\`adl\`ag for all $f \in C_c \cup W^{1,1}$.
		
	For $t \in [0,T]$, define 
	\[
		\widetilde{u}'_t := \widetilde{G} + M'_t - M'_T - \int_t^T \left[ \tr[ \widetilde  a_s D^2 \widetilde  u_s ] - \widetilde  H_s(\cdot, D\widetilde  u_s) \right] ds.
	\]
	Then, for all $t \in [0,T]$, there exists $\Omega_t \in \mbb F_t$ with $\PP(\Omega_t) = 1$ such that $\widetilde{u}'_t(\omega) = \widetilde{u}_t(\omega)$ for all $\omega \in \Omega_t$, and so the same boundedness, Lipschitz regularity, and semiconcavity properties are satisfied by $\widetilde u'$. It follows from Fubini's theorem, that for $\PP$-a.e. $\omega \in \Omega$, $M'$ is right-continuous and
	\[
		|\{ t \in [0,T] : \omega \notin \Omega_t \} | = 0.
	\]
	Therefore, in the integration over $s \in [t,T]$ above, we may replace $D\widetilde u_s$ and $D^2 \widetilde u_s$ with respectively $D\widetilde u_s'$ and $D^2\widetilde u_s'$ without affecting the value of the integral. For the rest of the proof, we thus work with the versions of $M$ and $\widetilde u$ for which \eqref{MGidentity} holds and $\langle M, f \rangle$ c\`adl\`ag for all $f \in C_c \cup W^{1,1}$.
	
	We next note that, by the semiconcavity and Lipschitz bounds for $\widetilde u$,
	\[
		t \mapsto \int_0^t \left[ \tr[ \widetilde  a_s D^2 \widetilde  u_s ] - \widetilde  H_s(\cdot, D\widetilde  u_s) \right] ds
	\]
	is Lipschitz continuous with respect to the strong topology of $\mcl M_\loc \cap W^{-1,\oo}$, and therefore, for all $f \in W^{1,1} \cup C_c$, $t \mapsto \langle \widetilde{u}'_t, f \rangle$ is c\`adl\`ag. Exploiting the uniform Lipschitz continuity of $\widetilde u$ in space, it follows that, in fact, $t \mapsto \widetilde{u}_t$ is c\`adl\`ag with respect to local-uniform convergence in $C(\RR^d)$. We conclude that $t \mapsto M_t$ is c\`adl\`ag with respect to the strong topology of $\mcl M_\loc \cap W^{-1,\oo}$ induced by the quasinorm $\norm{\cdot}_{\mcl M_\loc \cap W^{-1,\oo}}$.
\end{proof}

\subsection{Sub/supersolutions and a comparison principle}

We relax the notion of solutions of \eqref{E:hjbbody} with the following definition.

\begin{definition}\label{D:subsupersolution}
	The pair $(\widetilde  u,M)$ is a sub- (resp. super-) solution of \eqref{E:hjbbody} if \eqref{solutionadapted}, \eqref{Mmartingale1}, and \eqref{solutionbounds} from Definition \ref{D:solution} are satisfied, and if, with probability one and for $0 \le t_1 \le t_2 \le T$, in the sense of distributions on $\RR^d$,
	\begin{equation}\label{subsolution}
		\widetilde  u_{t_1} \le \widetilde  u_{t_2} + \int_{t_1}^{t_2} \left[  \tr(\widetilde  a_s D^2 \widetilde  u_s) - \widetilde  H_s(\cdot, D\widetilde  u_s) \right]ds - M_{t_2} + M_{t_1}
	\end{equation}
	(resp.
	\begin{equation}\label{supersolution}
		\left. \widetilde  u_{t_1} \ge \widetilde  u_{t_2} + \int_{t_1}^{t_2} \left[  \tr(\widetilde  a_s D^2 \widetilde  u_s) - \widetilde  H_s(\cdot, D\widetilde  u_s) \right]ds - M_{t_2} + M_{t_1} \right).
	\end{equation}
\end{definition}

Uniqueness of solutions for \eqref{E:hjbbody} then follows from the following theorem and the fact that solutions are both sub- and super-solutions.

\begin{theorem}\label{T:comparison}
	Assume \eqref{A:filtration}, $(\widetilde  a,\widetilde  H^1,\widetilde  G^1)$ and $(\widetilde  a,\widetilde  H^2,\widetilde  G^2)$ satisfy \eqref{A:stochHJB}, $\widetilde  H^1 \le \widetilde  H^2$, and $\widetilde  G^1 \ge \widetilde  G^2$, and let $(\widetilde  u^1,M^1)$ and $(\widetilde  u^2,M^2)$ be respectively a super- and sub-solution of \eqref{E:hjbbody} corresponding to $(\widetilde  H^1,\widetilde  G^1)$ and $(\widetilde  H^2,\widetilde  G^2)$. Then $\widetilde  u^1 \ge \widetilde  u^2$.
\end{theorem}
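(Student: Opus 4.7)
The plan is to argue by duality, in the spirit of the contraction estimate driving the existence proof in Theorem \ref{T:existence}. Set $w_t := \widetilde u^2_t - \widetilde u^1_t$ and $\mcl M_t := M^2_t - M^1_t$. Subtracting \eqref{supersolution} for $\widetilde u^1$ from \eqref{subsolution} for $\widetilde u^2$ gives, in the distributional sense,
\[
	w_{t_2} - w_{t_1} \ge \int_{t_1}^{t_2} \mu_r \, dr + \mcl M_{t_2} - \mcl M_{t_1}, \qquad 0 \le t_1 \le t_2 \le T,
\]
where $\mu_r := -\tr(\widetilde a_r D^2 w_r) + \widetilde H^2_r(\cdot, D\widetilde u^2_r) - \widetilde H^1_r(\cdot, D\widetilde u^1_r)$. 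Interpolating in $p$ as in Theorem \ref{T:existence},
\[
	\widetilde H^2_r(\cdot, D\widetilde u^2_r) - \widetilde H^1_r(\cdot, D\widetilde u^1_r) = b_r \cdot Dw_r + g_r,
\]
with $b_r := \int_0^1 D_p \widetilde H^2_r(\cdot, \tau D\widetilde u^2_r + (1-\tau) D\widetilde u^1_r)\, d\tau$ (uniformly bounded by the Lipschitz bound on $\widetilde u^i$ and \eqref{A:MAINHamiltonian}) and $g_r := \widetilde H^2_r(\cdot, D\widetilde u^1_r) - \widetilde H^1_r(\cdot, D\widetilde u^1_r) \ge 0$ by the hypothesis $\widetilde H^1 \le \widetilde H^2$.

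Let $\phi: \RR \to [0,\infty)$ be smooth, convex, nondecreasing, and identically $0$ on $(-\infty,0]$ (approximating $r \mapsto r_+$, and truncated at a high level so that $\phi \in W^{2,\oo}$), and set $v_r := \phi(w_r)$. For $\lambda, \mcl K > 0$ to be determined and $f = \psi_{\lambda,\mcl K}$ from Lemma \ref{L:dualproblem}, apply Lemma \ref{L:composition} (in the variant for $\ge$ with nondecreasing $\phi$). The identity $v_T = \phi(\widetilde G^2 - \widetilde G^1) \equiv 0$ forced by $\widetilde G^1 \ge \widetilde G^2$, together with the nonnegativity of $g_r \phi'(w_r) \psi_{\lambda,\mcl K,r}$, gives
\[
	\EE \int_{\RR^d} v_t\, \psi_{\lambda,\mcl K,t}\, dx \le -\EE \int_t^T \Bigl[ \langle -\tr(\widetilde a_r D^2 w_r) + b_r \cdot Dw_r,\, \phi'(w_r) \psi_{\lambda,\mcl K,r}\rangle + \int v_r\, \del_r \psi_{\lambda, \mcl K, r}\, dx \Bigr] dr.
\]

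The crux is to transfer the $x$-derivatives from $w_r$ onto $\psi_{\lambda,\mcl K}$ by integration by parts. Regularize $b_r$ to $b^\delta_r$ by replacing $D\widetilde u^i_r$ with $D(\rho_\delta * \widetilde u^i_r)$ inside $D_p\widetilde H^2_r$. The semiconcavity estimates of Definition \ref{D:solution}\eqref{solutionbounds}, combined with the convexity of $\widetilde H^2$ in $p$, produce a uniform (in $\delta$) upper bound on $\div b^\delta_r$, while \eqref{A:MAINdiffusion} controls $\div\widetilde a_r$ and $\div\div\widetilde a_r$. Using the chain rule $Dv_r = \phi'(w_r) Dw_r$, the convexity of $\phi$ (so that $\phi''(w_r) \widetilde a_r Dw_r \cdot Dw_r\, \psi_{\lambda,\mcl K,r} \ge 0$), and double integration by parts, we obtain, modulo an error $\eps^\delta$ from $(b_r - b^\delta_r) \cdot Dv_r\, \psi_{\lambda,\mcl K,r}$ that vanishes as $\delta \to 0$ by dominated convergence,
\[
\begin{split}
	\langle -\tr(\widetilde a_r D^2 w_r) &+ b^\delta_r \cdot Dw_r,\, \phi'(w_r) \psi_{\lambda, \mcl K, r}\rangle \\
	&\ge -\int v_r \bigl[ \tr(\widetilde a_r D^2 \psi_{\lambda,\mcl K,r}) + (2\div\widetilde a_r + b^\delta_r) \cdot D\psi_{\lambda,\mcl K,r} + (\div\div\widetilde a_r + \div b^\delta_r) \psi_{\lambda,\mcl K,r}\bigr] dx.
\end{split}
\]
Choosing $\lambda$ and $\mcl K$ sufficiently large relative to the uniform bounds on $\nor{\widetilde a_r}{\oo}$ and $|2\div\widetilde a_r + b^\delta_r|$, Lemma \ref{L:dualproblem} delivers
\[
	-\del_r \psi_{\lambda,\mcl K,r} + \tr(\widetilde a_r D^2 \psi_{\lambda,\mcl K,r}) + (2\div\widetilde a_r + b^\delta_r)\cdot D\psi_{\lambda,\mcl K,r} + (\div\div\widetilde a_r + \div b^\delta_r)\psi_{\lambda,\mcl K,r} \le C \psi_{\lambda,\mcl K,r}.
\]
Hence $\gamma_t := \EE \int v_t\, \psi_{\lambda,\mcl K,t}\, dx$ satisfies $\gamma_t \le C\int_t^T \gamma_r\, dr + \eps^\delta$; sending $\delta \to 0$ and applying Gr\"onwall's inequality yields $\gamma_t \equiv 0$. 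The approximation $\phi \to (\cdot)_+$ then gives $\EE \int (w_t)_+\, \psi_{\lambda,\mcl K,t}\, dx = 0$, so $w_t \le 0$ almost everywhere, and the continuity of $w_t$ in $x$ together with right continuity in $t$ promote this to $\widetilde u^1 \ge \widetilde u^2$ everywhere, almost surely.

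The principal obstacle is the rigorous justification of the integration by parts, because $D^2 w_r$ is only a locally finite measure whereas $\psi_{\lambda, \mcl K}$ has merely exponential (not compact) support. This requires a truncation argument: multiply $\psi_{\lambda, \mcl K}$ by a smooth cutoff $\chi_R$ supported in $B_{2R}$ with $\chi_R \equiv 1$ on $B_R$, perform the above manipulations on this compactly supported test function, and send $R \to \infty$. The exponential decay of $\psi_{\lambda,\mcl K}$ dominates the at-most polynomial growth of $\nor{D^2 w_r}{TV(B_R)}$ provided by Lemma \ref{L:SCprops}\eqref{L:TVbound} together with the uniform Lipschitz and semiconcavity bounds on $\widetilde u^i$ from Definition \ref{D:solution}\eqref{solutionbounds}, so that the cutoff-induced boundary contributions vanish in the limit.
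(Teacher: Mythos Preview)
Your proof is correct and follows essentially the same approach as the paper's: the only cosmetic differences are that the paper works with $v = \widetilde u^1 - \widetilde u^2$ and a nonincreasing convex $\phi$ (rather than your $w = \widetilde u^2 - \widetilde u^1$ with nondecreasing $\phi$), and interpolates through $\widetilde H^1$ instead of $\widetilde H^2$, absorbing the ordering $\widetilde H^1 \le \widetilde H^2$ directly into the differential inequality rather than isolating your term $g_r \ge 0$. Your final paragraph on the cutoff $\chi_R$ is a valid way to justify the integration by parts against the non-compactly-supported $\psi_{\lambda,\mcl K}$; the paper instead relies on the fact (recorded in Lemma~\ref{L:dualproblem}) that $\psi_{\lambda,\mcl K} \in W^{2,1}([0,T]\times\RR^d)$, which together with $w_r \in L^\infty$ and Remark~\ref{R:testfunctions} makes the pairings and integrations by parts well defined without an explicit truncation.
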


\begin{proof}
Let $v = \widetilde  u^1 - \widetilde  u^2$ and $M = M^1 - M^2$, and, for $(t,x) \in [0,T] \times \RR^d$, define
\[
	b_t(x) := \int_0^1 D\widetilde  H^1_t\left( x,\tau D\widetilde  u^1_t(x) + (1-\tau) D\widetilde  u^2_t(x) \right)d\tau.
\]
Then
\[
	-dv_t \ge \left[ \tr(\widetilde  a_t D^2 v_t) - b_t \cdot Dv_t \right] ds - dM_t \quad \text{in }[0,T].
\]
Let $\phi: \RR \to \RR$ be $C^2$, non-negative, non-increasing, and convex, and set $w = \phi(v)$. Then, by Lemma \ref{L:composition}, for any nonnegative $f \in W^{2,1}([0,T] \times \RR^d) \cap C^2([0,T] \times \RR^d)$ and $0 \le s < t \le T$,
\begin{equation}\label{dgamma1}
\begin{split}
	\EE&\int_{\RR^d} w_t(x) f_t(x)dx - \EE\int_{\RR^d} w_s(x) f_s(x)dx\\
	&\ge \int_s^t \EE \Bigg[ -\left\langle \tr[\widetilde  a_r D^2v_r ], \phi'(v_r)f_r \right \rangle + \int_{\RR^d} \Bigg( \phi'(v_r(x)) b_r(x) \cdot Dv_r(x)f_r(x) \\
	&\qquad\qquad + w_r(x) \frac{\del f_r(x)}{\del r} \Bigg) dx \Bigg] dr\\
	&\ge \int_s^t \EE \Bigg[ -\left\langle \tr[\widetilde  a_r D^2 w_r ],f_r \right \rangle + \int_{\RR^d} \Bigg( b_r(x) \cdot Dw_r(x)f_r(x) \\
	&\qquad \qquad+ w_r(x) \frac{\del f_r(x)}{\del r} \Bigg) dx \Bigg] dr.
\end{split}
\end{equation}
Fix $\delta > 0$, let $\rho_\delta$ be as in \eqref{mollifier}, and, for $t \in [0,T]$, set
\[
	b^\delta_t := \int_0^1 D\widetilde  H^1_t\left( \cdot, \tau D(\rho_\delta *\widetilde  u^1_t) + (1-\tau) D(\rho_\delta * \widetilde  u^2_t)\right)d\tau.
\]
Then, as $\delta \to 0$, $b^\delta \to b$ almost everywhere in $[0,T] \times \RR^d \times \Omega$. Moreover, because $\widetilde  H^1$ is convex and $u^1$ and $u^2$ are globally Lipschitz and semiconcave, we have, for some $C > 0$ independent of $\delta$, 
\[
	\nor{b^\delta}{\oo} \le C \quad \text{and} \quad \div b^\delta \le C.
\]
For $r \in [0,T]$, set $\zeta^\delta_r :=  - (b_r - b^\delta_r) \cdot Dw_r$. Continuing \eqref{dgamma1}, we have
\begin{align*}
	&\EE\int_{\RR^d} w_t(x) f_t(x)dx - \EE\int_{\RR^d} w_s(x) f_s(x)dx\\
	&\ge \int_s^t \EE \Bigg[ -\left\langle \tr[\widetilde  a_r D^2 w_r ] + \zeta^\delta_r,f_r \right \rangle + \int_{\RR^d} \Bigg( \div (b^\delta_r(x) w_r(x) )f_r(x) \\
	&\qquad \qquad + w_r(x) \left[\frac{\del f_r(x)}{\del r} - \div b^\delta_r(x) f_r(x) \right] \Bigg) dx \Bigg] dr\\
	&= \int_s^t \EE \Bigg[ \int_{\RR^d} w_r(x) \bigg( \frac{\del f_r(x)}{\del r} -  \div \div(\widetilde  a_r(x) f_r(x) ) - b_r^\delta(x) \cdot Df_r(x)\\
	& \qquad \qquad - \div b^\delta_r(x) f_r(x) \bigg) dx - \left \langle \zeta^\delta_r, f_r \right \rangle \Bigg]dr\\
	&= \int_s^t \EE \Bigg[ \int_{\RR^d} w_r(x) \Bigg( \frac{\del f_r(x)}{\del r} - \tr[ \widetilde  a_r(x) D^2 f_r(x) ] - (b_r^\delta(x) + 2 \div \widetilde  a_r(x)) \cdot Df_r(x) \\
	& \qquad \qquad - ( \div b^\delta_r(x) + \div \div \widetilde  a_r(x) )f_r(x) \Bigg) dx - \left \langle \zeta^\delta_r, f_r \right \rangle \Bigg]dr.
\end{align*}
Then, if we choose $\lambda$, $\mcl K$, and $C > 0$ sufficiently large, depending only on $\norm{\widetilde  a}_{C^{1,1}}$, the local bounds for $D_p \widetilde  H^1$ and $D^2_{pp} \widetilde  H^1$, and the Lipschitz and semiconcavity bounds for $\widetilde  u^1$ and $\widetilde  u^2$, and if we choose $f = \psi_{\lambda,\mcl K}$ from Lemma \ref{L:dualproblem} and set
\[
	\gamma_t := \EE \int_{\RR^d} w_t(x)\psi_{\lambda,\mcl K}(t,x)dx,
\]
we conclude that, for $t \in [0,T]$,
\[
	\dot \gamma_t \ge - C \gamma_t - \left \langle \zeta^\delta_t ,\psi_{\lambda,\mcl K, t} \right \rangle.
\]
By the dominated convergence theorem, using the fact that $\psi_{\lambda,\mcl K}(t,\cdot) \in W^{1,1}$, upon sending $\delta \to 0$, we have $\dot \gamma_t \ge -C \gamma_t$. Gr\"onwall's inequality gives $\sup_{t \in [0,T]} \gamma_t \le e^{C(T-t)} \gamma_T$, or, in other words,
\[
	\sup_{t \in [0,T]} \EE \int_{\RR^d} \phi(\widetilde  u^1_t(x) - \widetilde  u^2_t(x)) \psi_{\lambda,\mcl K}(t,x)dx
	\le e^{C(T-t)} \EE \int_{\RR^d} \phi(\widetilde  G^1(x) - \widetilde  G^2(x)) \psi_{\lambda,\mcl K}(t,x)dx.
\]
We then take $\phi$ to be such that $\phi(r) > 0$ if and only if $r < 0$. Using the fact that $\widetilde  G^1 \ge \widetilde  G^2$ and $\psi_{\lambda,K} > 0$ in $[0,T] \times \RR^d$, we conclude that, with probability one, $\widetilde  u^1 \ge \widetilde  u^2$ in $[0,T] \times \RR^d$, as desired.

\end{proof}

\subsection{Stability} We now discuss stability for \eqref{E:hjbbody}. This result is used later to prove the existence of weak solutions to the mean field games system. It also immediately gives a vanishing viscosity type result relating solutions of \eqref{E:hjbbody} and solutions of the first order problem considered in \cite{CSmfg}.

\begin{theorem}\label{T:comparison2}
	Assume $(\widetilde  a^1, \widetilde  G^1, \widetilde  H^1)$ and $(\widetilde  a^2, \widetilde  G^2, \widetilde  H^2)$ satisfy \eqref{A:filtration} and \eqref{A:stochHJB}, and let $(\widetilde  u^1,M^1)$ and $(\widetilde  u^2,M^2)$ be the corresponding solutions of \eqref{E:hjbbody} in the sense of Definition \ref{D:solution}. Then, for all $R > 0$, there exists $C > 0$ depending on $R$, $d$, the Lipschitz and semiconcavity constants for $\widetilde  u^1$ and $\widetilde  u^2$, and constants in the given assumptions, such that
	\begin{align*}
		\sup_{t \in [0,T]} &\EE \norm{\widetilde  u^1_t - \widetilde  u^2_t}_{\oo, B_R}^{d+2}\\
	&\le C\EE \left( \sup_{t \in [0,T]}\norm{\widetilde  a^1_t - \widetilde  a^2_t}_{C^{0,1}} + \norm{\widetilde  H^1 - \widetilde  H^2}_{\oo, [0,T] \times \RR^d \times B_C} + \norm{\widetilde  G^1 - \widetilde  G^2}_\oo \right).
	\end{align*}
\end{theorem}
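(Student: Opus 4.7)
My plan mirrors the strategy of Theorem \ref{T:comparison}, with two changes: I replace the comparison function $\phi(r) = r_-$ by a $W^{2,\oo}$ truncation of $\phi(r) = r^2$ in order to produce a weighted $L^2$ estimate on $v := \widetilde u^1 - \widetilde u^2$, and I carry along an inhomogeneity encoding the data discrepancy. Setting $M := M^1 - M^2$, subtracting the equations from Definition \ref{D:solution} yields, in the distributional sense on $\RR^d$ and for all $0 \le s \le T$,
\begin{equation*}
	v_s = v_T + \int_s^T \bigl[\tr(\widetilde a^1_r D^2 v_r) - b_r \cdot Dv_r + E_r\bigr] dr + M_s - M_T,
\end{equation*}
with $b_r := \int_0^1 D_p \widetilde H^1_r(\cdot, \tau D\widetilde u^1_r + (1-\tau)D\widetilde u^2_r) d\tau$ uniformly bounded and $\div b_r$ bounded above (by the semiconcavity of $\widetilde u^i$ and convexity of $\widetilde H^1$), and source term
\[
	E_r := \tr\bigl((\widetilde a^1_r - \widetilde a^2_r)D^2\widetilde u^2_r\bigr) + \widetilde H^2_r(\cdot, D\widetilde u^2_r) - \widetilde H^1_r(\cdot, D\widetilde u^2_r).
\]

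Next I apply Lemma \ref{L:composition} with $\phi \in C^2 \cap W^{2,\oo}$ convex, equal to $r^2$ on $[-\nor{v}{\oo}, \nor{v}{\oo}]$ and affine outside. Using the pointwise identities $2v\,\tr(\widetilde a^1 D^2 v) = \tr(\widetilde a^1 D^2 v^2) - 2\widetilde a^1 Dv \cdot Dv \le \tr(\widetilde a^1 D^2 v^2)$ and $2v(b \cdot Dv) = b \cdot Dv^2$, and integrating by parts twice to move derivatives onto the test function, I arrive at
\begin{equation*}
	\EE \int v_s^2 f_s dx \le \EE \int v_T^2 f_T dx + \EE\int_s^T \int v_r^2 (\mathcal L^\ast f_r - \del_r f_r) dx dr + 2\EE\int_s^T \int v_r f_r E_r dx dr,
\end{equation*}
where $\mathcal L^\ast f := \tr(\widetilde a^1 D^2 f) + (2\div \widetilde a^1 + b) \cdot Df + (\div\div \widetilde a^1 + \div b) f$. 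As in Theorem \ref{T:comparison}, the integration by parts is justified by first mollifying $b$ and $D\widetilde u^i$; the resulting commutator vanishes as the mollification parameter tends to zero by dominated convergence.

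Choosing $f = \psi_{\lambda, \mathcal K}$ from Lemma \ref{L:dualproblem} with $\lambda$ and $\mathcal K$ large enough (in terms of $\nor{\widetilde a^1}{C^{1,1}}$, $\nor{b}{\oo}$, and the upper bound on $\div b$) forces $\mathcal L^\ast f_r - \del_r f_r \le Cf_r$, so $\gamma_s := \EE\int v_s^2 \psi_{\lambda, \mathcal K, s} dx$ satisfies, by Gr\"onwall,
\begin{equation*}
	\gamma_s \le e^{C(T-s)}\Bigl(\gamma_T + 2\EE\int_s^T \int v_r \psi_{\lambda,\mathcal K,r} E_r dx dr\Bigr).
\end{equation*}
The terminal contribution satisfies $\gamma_T \le C \EE \nor{\widetilde G^1 - \widetilde G^2}{\oo}$. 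The Hamiltonian part of $E_r$ yields a contribution bounded by $C\EE\nor{\widetilde H^1 - \widetilde H^2}{\oo, [0,T]\times \RR^d \times B_C}$; the diffusion part of $E_r$ is handled via a single integration by parts,
\[
	\int v\psi\,\tr\bigl((\widetilde a^1 - \widetilde a^2) D^2 \widetilde u^2\bigr) dx = -\int D\widetilde u^2 \cdot \bigl[ \psi(\widetilde a^1 - \widetilde a^2) Dv + v(\widetilde a^1 - \widetilde a^2) D\psi + v\psi\,\div(\widetilde a^1 - \widetilde a^2)\bigr] dx,
\]
giving $C\EE\sup_t \nor{\widetilde a^1_t - \widetilde a^2_t}{C^{0,1}}$ after using the uniform Lipschitz bound on $\widetilde u^2$ and the fact that $\int(\psi + |D\psi|)dx < \oo$.

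Finally, to pass from the weighted $L^2$ bound to the $L^\oo$ estimate on $B_R$, I apply Lemma \ref{L:interpolation} with $p = 2$ to $v_t \chi$, where $\chi \in C^\oo_c(B_{R+1})$ is a cutoff equal to $1$ on $B_R$. Since $\nor{D(v_t \chi)}{\oo}$ is deterministically bounded by the uniform Lipschitz estimates on $\widetilde u^1$ and $\widetilde u^2$, and $\nor{v_t \chi}{2}^2 \le C(R)\int v_t^2 \psi_{\lambda, \mathcal K, t}dx$, taking expectations yields $\EE\nor{v_t}{\oo, B_R}^{d+2} \le C \gamma_t$, which completes the proof. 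The main technical obstacle I anticipate is the rigorous justification of the integration by parts producing $\mathcal L^\ast$: although each $\widetilde u^i$ is semiconcave, $v = \widetilde u^1 - \widetilde u^2$ is only a difference of semiconcave functions, so $D^2 v$ is merely a locally finite signed measure. This is precisely why the mollification of $b$ and of $D\widetilde u^i$, carried out as in the proof of Theorem \ref{T:comparison}, is essential.
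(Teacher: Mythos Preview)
Your proposal is correct and follows essentially the same route as the paper's proof: both derive a differential inequality for the weighted $L^2$ energy $\gamma_t = \EE\int v_t^2\psi_{\lambda,\mcl K,t}\,dx$ via Lemma~\ref{L:composition}, control the source term $E_r$ by a single integration by parts against $D\widetilde u^2$ (the paper uses $D\widetilde u^1$), choose $\psi_{\lambda,\mcl K}$ from Lemma~\ref{L:dualproblem} to absorb the dual operator, and conclude via Lemma~\ref{L:interpolation} with $p=2$. The only cosmetic differences are that the paper takes $\phi(r)=(r_-)^2$ and then switches the roles of $\widetilde u^1,\widetilde u^2$, whereas you use a $W^{2,\infty}$ truncation of $r^2$ directly, and the paper invokes the interpolation on $v_t\psi_t^{1/2}$ rather than on $v_t\chi$ with a cutoff; neither changes the argument.
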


\begin{proof}
We argue as in the proof of Theorem \ref{T:comparison}, and let $v$, $M$, $b$, $b^\delta$, $\phi$, $w$, and $\zeta^\delta$ be defined in the same way. Defining also, for $(t,x) \in [0,T] \times \RR^d$,
\[
	\kappa_t(x) := \widetilde  H^2_t(x,D\widetilde  u^2_t(x)) - \widetilde  H^1_t(x,D\widetilde  u^2_t(x)),
\]
 we deduce, by Lemma \ref{L:composition}, for any $f \in W^{1,1}([0,T] \times \RR^d)$ and $0 \le s < t \le T$,
\begin{align*}
	&\EE\int_{\RR^d} w_t(x) f_t(x)dx - \EE\int_{\RR^d} w_s(x) f_s(x)dx\\
	&\ge \int_s^t \EE \bigg[ -\left\langle \tr[(\widetilde  a^1_r - \widetilde  a^2_r) D^2 u^1_r]\phi'(v_r) + \tr[\widetilde  a^2_r D^2 w_r ] + \zeta^\delta_r,f_r \right \rangle\\
	&\quad + \int_{\RR^d} \bigg( \bigg[- \kappa_r(x) +  \div (b^\delta_r(x) w_r(x) ) \bigg] f_r(x) \\
	&\qquad + w_r(x) \bigg[\frac{\del f_r(x)}{\del r} - \div b^\delta_r(x) f_r(x) \bigg] \bigg) dx \bigg] dr\\
	&= \int_s^t \EE \bigg[ \int_{\RR^d} w_r(x) \Bigg( \frac{\del f_r(x)}{\del r} - \tr[ D^2(\widetilde  a^2_r(x) f_r(x) )]\\
	&\quad - b_r^\delta(x) \cdot Df_r(x) - \div b^\delta_r(x) f_r(x) \Bigg) dx - \left \langle \tr[(\widetilde  a^1_r - \widetilde  a^2_r) D^2 \widetilde  u^1_r]\phi'(v_r) + \zeta^\delta_r + \kappa_r, f_r \right \rangle \bigg]dr\\
	&= \int_s^t \EE \Bigg[ \int_{\RR^d} w_r(x) \Bigg( \frac{\del f_r(x)}{\del r} - \tr[ \widetilde  a^2_r(x) D^2 f_r(x) ] - (b_r^\delta(x) + 2 \div\widetilde  a^2_r(x)) \cdot Df_r(x) \\
	& \qquad \qquad - ( \div b^\delta_r(x) + \div \div \widetilde  a^2_r(x) )f_r(x) \Bigg) dx\\
	&\qquad \qquad  - \left \langle  \tr[(\widetilde  a^1_r - \widetilde  a^2_r) D^2 u^1_r]\phi'(v_r) +\zeta^\delta_r + \kappa_r, f_r \right \rangle \Bigg]dr.
\end{align*}
We now take $f = \psi_{\lambda,\mcl K}$ as in Lemma \ref{L:composition} for $\lambda,\mcl K$ sufficiently large, depending only on the data specified in the statement of the theorem, and set
\[
	\gamma_t := \EE \int_{\RR^d} w_t(x)\psi_{\lambda,\mcl K}(t,x)dx.
\]
Then, upon sending $\delta \to 0$ and using the dominated convergence theorem, we have, for $t \in [0,T]$,
\[
	\dot \gamma_t \ge - C \gamma_t - \EE \left \langle \tr[(\widetilde  a^1_t -  \widetilde  a^2_t) D^2 \widetilde  u^1_t]\phi'(v_t) + \kappa_t,\psi_{\lambda,\mcl K, t} \right \rangle.
\]
We further estimate, for some $C > 0$ as in the statement of the theorem,
\[
	\left| \left\langle \kappa_t, \psi_{\lambda,\mcl K, t} \right\rangle \right| \le C \sup_{(x,p) \in \RR^d \times B_C} |\widetilde  H^1_t(x,p) - \widetilde  H^2_t(x,p)|,
\]
and
\begin{align*}
	\Big \langle &\tr[(\widetilde  a^1_t - \widetilde  a^2_t) D^2 \widetilde  u^1_t]\phi'(v_t),\psi_{\lambda,\mcl K, t} \Big \rangle\\
	&= - \int_{\RR^d} D\widetilde  u^1_t(x) \cdot \Big[ \div(\widetilde  a^1_t(x) - \widetilde  a^2_t(x)) \phi'(v_t(x)) \psi_{\lambda,\mcl K}(t,x)\\
	& \quad + (\widetilde  a^1_t(x) - \widetilde  a^2_t(x))Dv_t(x) \phi''(v_t(x))\psi_{\lambda,\mcl K}(t,x)\\
	&\quad + (\widetilde  a^1_t(x) - \widetilde  a^2_t(x)) D\psi_{\lambda,\mcl K}(t,x) \phi'(v_t(x)) \Big]dx.
\end{align*}
We then take $\phi(r) = (r_-)^2$. An application of Young's inequality yields, for a different $C > 0$,
\[
	\dot \gamma_t \ge -C \gamma_t - C\EE\left ( \norm{\widetilde  a^1_t - \widetilde  a^2_t}_{C^{0,1}} + \norm{\widetilde  H^1 - \widetilde  H^2}_{\oo, [0,T] \times \RR^d \times B_C} \right).
\]
Upon applying Gr\"onwall's inequality and repeating the argument after the switching the roles of $\widetilde  u^1$ and $\widetilde  u^2$, we obtain
\begin{align*}
	\sup_{t \in [0,T]} & \EE \int_{\RR^d} |\widetilde  u^1_t(x) - \widetilde  u^2_t(x)|^2 \psi_{\lambda,\mcl K}(t,x) dx\\
	& \le C\EE\left(\norm{\widetilde  G_1 - \widetilde  G_2}_\oo^2 + \sup_{t \in [0,T]}\norm{\widetilde  a^1_t - \widetilde  a^2_t}_{C^{0,1}} + \norm{\widetilde  H^1 - \widetilde  H^2}_{\oo, [0,T] \times \RR^d \times B_C} \right).
\end{align*}
The desired estimate now follows from Lemma \ref{L:interpolation} with $p = 2$, and the boundedness of $\widetilde  G^1$ and $\widetilde  G^2$.
\end{proof}

\subsection{Optimal control representation} We finish this section by stating an optimal control representation of the solution. The result, and proof, are similar to the first-order case in \cite[Proposition 2.7]{CSmfg}. 

We assume without loss of generality that the probability space $(\Omega, \mbb P)$ supports an $m$-dimensional Brownian motion $(B_t)_{t \in [0,T]}$ independent of $\mbb F$. We then define the set of admissible controls
\begin{align*}
	\mcl C &:= \{ \alpha \in L^\oo( [0,T] \times \Omega; \RR^d) : \alpha \text{ is progressively measurable} \\
	& \qquad \text{with respect to the filtration generated by $B$}\}.
\end{align*}
Given $\alpha \in \mcl C$, we denote by $X^{\alpha,t,x} :[t,T] \times \Omega^* \to \RR^d$ the unique solution of the stochastic differential equation
\begin{equation}\label{controlsde}
	\begin{cases}
	dX^{\alpha,t,x}_s = \alpha_s ds + \widetilde  \sigma_s(X^{\alpha,t,x}_s) dB_s, & t \le s \le T,\\
	X^{\alpha,t,x}_t = x.
	\end{cases}
\end{equation}
Observe that $X^{\alpha,t,x}$ is adapted to the filtration generated by $\mbb F$ and $B$.

\begin{proposition}
	The unique solution $\widetilde  u$ of \eqref{E:hjbbody} has the representation
	\[
		\widetilde  u_t(x) = \essinf_{\alpha \in \mcl C} \EE \left[ \int_t^T \widetilde  H^*_s(X^{t,x,\alpha}_s, \alpha_s)ds + \widetilde  G(X^{t,x,\alpha}_T) \, \Big| \, \mbb F_t \right].
	\]
\end{proposition}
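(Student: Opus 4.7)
The plan is to prove the representation by a verification argument, adapted from \cite[Proposition 2.7]{CSmfg} to the second-order setting. Since the unique solution $\widetilde u$ is only Lipschitz and semiconcave in space, with $D^2 \widetilde u$ a locally finite measure, It\^o's formula cannot be applied directly to $\widetilde u_s(X^{\alpha,t,x}_s)$, so the main technical device is mollification: set $\widetilde u^\delta := \widetilde u *\rho_\delta$ and $M^\delta := M * \rho_\delta$, which are smooth in space, uniformly bounded in $W^{1,\oo}$ with $D^2 \widetilde u^\delta$ bounded above by the semiconcavity constant of Definition \ref{D:solution}\eqref{solutionbounds}, and which converge to $\widetilde u$ and $M$ in appropriate senses. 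Spatially mollifying \eqref{distribeq} shows that $\widetilde u^\delta$ is a c\`adl\`ag semimartingale whose drift is a regularized version of $-\tr(\widetilde a D^2 \widetilde u) + \widetilde H(\cdot, D\widetilde u)$, with a commutator error that vanishes as $\delta \to 0$ thanks to Lemma \ref{L:SCprops}\eqref{L:Duconverge}.

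For the upper bound, I would fix an arbitrary $\alpha \in \mcl C$, let $X = X^{\alpha,t,x}$ solve \eqref{controlsde}, and apply It\^o's formula to $s \mapsto \widetilde u^\delta_s(X_s)$ on $[t,T]$. After substituting the mollified PDE and matching the It\^o correction against $\tr(\widetilde a_s D^2 \widetilde u_s)$ using the stated consistency between $\widetilde a$ and the diffusion in \eqref{controlsde}, the drift of $d[\widetilde u^\delta_s(X_s)]$ reduces (up to mollification errors and genuine martingale increments from $dB$, $dW$, and $dM^\delta_s(X_s)$) to
\[
\bigl[\widetilde H_s(X_s, D\widetilde u_s(X_s)) + D\widetilde u^\delta_s(X_s)\cdot \alpha_s\bigr] ds.
\]
The Fenchel--Young inequality $p\cdot \alpha - \widetilde H_s(x,p) \le \widetilde H^*_s(x,\alpha)$, taking conditional expectation with respect to $\mbb F_t$ (which annihilates every martingale contribution, since $M$ is $\mbb F$-adapted and $B,W$ are either independent of or the source of $\mbb F^*$), using $\widetilde u_T = \widetilde G$, and sending $\delta \to 0$, yields
\[
\widetilde u_t(x) \le \EE\Bigl[\int_t^T \widetilde H^*_s(X_s,\alpha_s)\,ds + \widetilde G(X_T)\,\Big|\,\mbb F_t\Bigr].
\]

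For the reverse inequality, the natural optimal feedback is $\alpha^*_s = -D_p \widetilde H_s(X_s, D\widetilde u_s(X_s))$, which achieves equality in Fenchel--Young, but cannot be used directly because $D\widetilde u$ is only defined almost everywhere and the resulting SDE has non-Lipschitz drift. I would instead work with the regularized feedback $\alpha^\delta_s := -D_p \widetilde H_s(X^\delta_s, D\widetilde u^\delta_s(X^\delta_s))$, where $X^\delta$ solves \eqref{controlsde} with this drift; the uniform Lipschitz bound on $\widetilde u^\delta$ and the regularity of $\widetilde H$ from \eqref{A:MAINHamiltonian} make this SDE well-posed and force $(\alpha^\delta)_\delta$ to lie in a bounded subset of $\mcl C$. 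Repeating the It\^o computation above with $\alpha = \alpha^\delta$ turns the Fenchel--Young inequality into an equality up to a term vanishing with $\delta$, so that one obtains the reverse inequality in the limit. The main obstacle is the passage $\delta \to 0$: one needs tightness of $(X^\delta)_\delta$, and one must transfer the almost-everywhere convergence $D\widetilde u^\delta \to D\widetilde u$ (from Lemma \ref{L:SCprops}\eqref{L:Duconverge}) along the random trajectories, which relies on absolute continuity of the law of $X^\delta_s$ with respect to Lebesgue measure, uniform in $\delta$, together with the uniform semiconcavity bound to dominate and close the error. Combining both inequalities gives the claimed representation.
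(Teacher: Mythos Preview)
The paper omits this proof entirely, pointing to \cite[Proposition 2.7]{CSmfg}. The argument there (for $\widetilde a\equiv 0$) does not proceed by It\^o--verification on a mollified solution; the natural extension to the present setting is rather through the discretization scheme of Theorem~\ref{T:existence}. For each $\widetilde u^N$, the representation is explicit: on every sub-interval $[t_n^N,t_{n+1}^N)$ the equation \eqref{E:discreteTVP} is a standard deterministic HJB problem with a classical control formula, and the conditional-expectation jump at the partition points is exactly the dynamic-programming step. One then passes to the limit $N\to\infty$ using the convergence already established in the existence proof. Your direct verification strategy is a genuinely different route, and the upper-bound half is essentially sound once the It\^o--Kunita bookkeeping is done carefully (the $dB$-integral vanishes under $\EE[\,\cdot\mid\mbb F_t]$ since $B$ is independent of $\mbb F$, and $M$ remains an $\mbb F^*$-martingale by that same independence).

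The lower bound, however, has a real gap. You rely on absolute continuity of the law of $X^\delta_s$ with respect to Lebesgue measure in order to transfer the a.e.\ convergence $D\widetilde u^\delta\to D\widetilde u$ from Lemma~\ref{L:SCprops}\eqref{L:Duconverge} to convergence along trajectories. But the whole point of the paper is that $\widetilde a=\widetilde\sigma\widetilde\sigma^\tau$ is allowed to be degenerate, and the idiosyncratic noise $\widetilde\sigma_s(X_s)\,dB_s$ alone gives no density for the law of $X^\delta_s$. If instead you read the $\sqrt{2\beta}\,dW_s$ term in \eqref{controlsde} as supplying uniform ellipticity, then the It\^o correction picks up an extra $\beta\Delta\widetilde u$ which is absent from the drift of \eqref{E:hjbbody}, and a cross-variation between $M$ and $X$ through $W$ appears via It\^o--Kunita; you have not tracked this, and the required cancellation is not automatic in the generality of Section~\ref{sec.HJ}, where $M$ is merely a c\`adl\`ag $\mbb F$-martingale with no assumed $W$-integral representation. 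The discretization route sidesteps both issues: the control representation for each deterministic piece of $\widetilde u^N$ holds at the viscosity level without any density argument, and no stochastic calculus on the nonsmooth limit $\widetilde u$ is required.
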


\begin{proof}
	Fix $N \in \NN$ and let $\widetilde u^N$ be as in \eqref{E:discreteTVP}. Then, for any $n = 1,2,\ldots, 2^N$ and $t \in [t_{n-1}^N, t_n^N)$, the standard stochastic control formula (see \cite{lionsHJB2}) gives
	\[
		\widetilde u^N_t(x) = \essinf_{\alpha \in \mcl C} \EE \left[ \int_t^{t^N_n} \widetilde H^{N,*}_s(X^{t,x,\alpha,N}_s, \alpha_s)ds + \widetilde u^N_{t^N_n,-}(X^{t,x,\alpha,N}_{t^N_n}) \right],
	\]
	where $X^{t,x,\alpha,N}$ solves
	\[
		\begin{cases}
			dX^{\alpha,t,x,N}_s = \alpha_s ds + \widetilde  \sigma^N_s(X^{\alpha,t,x,N}_s) dB_s, & t \le s \le T,\\
			X^{\alpha,t,x,N}_t = x.
		\end{cases}
	\]
	The $\mbb F_{t^N_n}$-measurability of $H^N$ and $\sigma^N$ (and therefore $X^{\alpha,t,x,N}$) on $[t, t_n^N)$ yields, in view of the definition of $u^N_{t^N_n,-}$,
	\[
		\widetilde u^N_t(x) = \essinf_{\alpha \in \mcl C} \EE \left[ \int_t^{t_n^N} \widetilde H^{N,*}_s(X^{t,x,\alpha,N}_s, \alpha_s)ds + \widetilde u^N_{t^N_n}(X^{t,x,\alpha,N}_{t^N_n}) \, \Big| \, \mbb F^N_t \right],
	\]
	and an inductive argument leads to 
	\begin{equation}\label{Ncontrol}
		\widetilde u^N_t(x) = \essinf_{\alpha \in \mcl C} \EE \left[ \int_t^T \widetilde H^{N,*}_s(X^{t,x,\alpha,N}_s, \alpha_s)ds + \widetilde G(X^{t,x,\alpha,N}_T) \, \Big| \, \mbb F^N_t \right].
	\end{equation}
	There exists $C > 0$ depending only on $T$ and the Lipschitz constant of $\widetilde\sigma$ in the $\RR^d$-variable such that, uniformly in $(t,x) \in [0,T] \times \RR^d$, $\alpha \in \mcl C$, $N \in \NN$, and $s \in [t,T]$,
	\[
		\EE |X^{t,x,\alpha,N}_s - X^{t,x,\alpha}_s|^2 \le C \EE \int_0^T \norm{\widetilde\sigma^N_s - \widetilde \sigma_s}_\oo^2 ds \xrightarrow{N \to \oo} 0.
	\]
	By Lemma \ref{L:discretesolution}, $\norm{D \widetilde u^N}_{\oo, \Omega \times [0,T] \times \RR^d}$ is bounded uniformly in $N$. It follows that there exists a deterministic $M > 0$ independent of $N$ such that the essential infimum in \eqref{Ncontrol} can be restricted to $\alpha \in \mcl C$ satisfying $\norm{\alpha}_\oo \le M$. Using the uniform continuity of $\widetilde \sigma$ on $[0,T] \times \RR^d$ and $\widetilde H^*$ on $[0,T] \times \RR^d \times B_M$, we may take the limit as $N \to \oo$ in \eqref{Ncontrol}, which yields the result.
\end{proof}

\section{The MFG system}\label{sec:mfg}

We now consider the MFG system \eqref{e.MFGstoch}. As discussed in the introduction, this is done by studying the transformed system
\begin{equation}\label{e.MFGstochBisbody}
\left\{
	\begin{split} 
	&d\widetilde  u_{t} = \bigl[ -\tr(\widetilde  a_t(x, \widetilde  m_t) D^2 \widetilde  u_t)+ \widetilde  H_t(x,D\widetilde  u_{t},\widetilde  m_{t})  \bigr] dt + dM_{t}
\ \  \text{in} \ \ (0,T)\times \R^d, \\
	&d \widetilde  m_{t} = \div \bigl[  \div(\widetilde  a_{t}(x,\widetilde  m_t) \widetilde  m_{t})+ \widetilde  m_{t} D_{p} \widetilde  H_t(x,D\widetilde  u_{t} , \widetilde  m_{t})
 \bigr] dt \ \ \text{in} \ \   (0,T)\times \R^d, \\
	&\widetilde  u_T(x)= \widetilde  G(x,\widetilde  m_T), \ \ \   \widetilde  m_{0}=\bar m_0 \ \  \text{in} \ \  \R^d.
	\end{split}
	\right.
\end{equation}
We begin by proving a general existence result, Theorem \ref{T.ExistenceMFG}, under the assumption that $\widetilde  a$, $\widetilde  G$, and $\widetilde  H$ satisfy the same conditions as in Section \ref{sec.HJ}, uniformly in the $m$-variable and Lipschitz continuous with respect to the Wasserstein distance. Uniqueness of probabilistically strong solutions is then established under an additional monotonicity condition in Theorem \ref{T:MFGuniqueness}.

We remind the reader that \eqref{e.MFGstochBisbody} is formally derived from the original system \eqref{e.MFGstoch} through the relations given, for $(t,x,p,m) \in [0,T] \times \RR^d \times \RR^d \times \mcl P(\RR^d)$, by
\begin{equation}\label{changevariablebody}
	\widetilde  u_t(x)= u_t(x+\sqrt{2\beta}W_{t}), \qquad \widetilde  m_t= (\Id- \sqrt{2\beta}W_{t})_\sharp  m_t,
\end{equation}
\begin{equation}\label{pierre0body}
	\widetilde  a_t(x,m)=  a_t(x+\sqrt{2\beta} W_t,(\Id+\sqrt{2\beta}W_t)_\sharp m), 
\end{equation}
\begin{equation}\label{takis0body} 
	\widetilde  H_t(x,p,m)=  H_t( x+\sqrt{2\beta} W_t,p,(\Id+\sqrt{2\beta}W_t)_\sharp m) ,  
 \end{equation}
\begin{equation}\label{pierre02body} 
	\widetilde  G(x,m)= G(x+\sqrt{2\beta} W_T, (\Id+\sqrt{2\beta}W_T)_\sharp m),
\end{equation}
and
\begin{equation}\label{martingalebody}
	\widetilde  v_t(x) = v_t(x-\sqrt{2\beta} W_t) - \sqrt{2\beta} Du_t(x - \sqrt{2\beta} W_t), \quad \text{and} \quad
	M_t = \int_0^t \widetilde  v_s dW_s.
\end{equation}

Motivated by the theory of weak and strong solutions to stochastic ordinary differential equations, we define a notion of probabilistically weak solutions of \eqref{e.MFGstochBisbody}. More precisely, a weak solution is characterized not only by the unknowns $\widetilde  u$, $M$, and $\widetilde  m$, but also the filtered probability space and Wiener process $W$.

\begin{definition}\label{def.weaksolMGF} We say that $(\Omega, \FF, \PP, W, \widetilde  u,M, \widetilde  m)$ is a weak solution of \eqref{e.MFGstochBisbody} if 
\begin{enumerate}[(a)]
\item\label{def.Pspace} $(\Omega, \FF, \PP)$ is a filtered probability space, with a complete right-continuous filtration $\FF$, 
\item\label{def.mw} The random variable $\widetilde  m$ takes values in $C([0,T],\mathcal P_2(\R^d))$ with $\mbb P$-probability one, $\widetilde m$ is adapted to the filtration $\FF$, and $W$ is a $d$-dimensional $\FF$-Brownian motion,
\item\label{def.hjb} $(\widetilde  u,M)$ satisfies, in the sense of Definition \ref{D:solution}, the Hamilton-Jacobi equation 
\begin{align*}
&d \widetilde  u_{t} = \bigl[ -\tr(\widetilde  a_t(x, \widetilde  m_t) D^2 \widetilde  u_t)+ \widetilde  H_t(x,D \widetilde  u_{t},\widetilde  m_{t})  \bigr] dt
%\\[2mm]
%& \hskip2.5in \ds  
+ dM_{t}
\ \  {\rm in } \ \ (0,T) \times \R^d,
\end{align*}
\item\label{def.FP} $\PP$-a.s. and in the sense of distributions, $\widetilde  m$ solves the Fokker-Planck equation 
\begin{align*}
& d \widetilde  m_{t} =  \div\bigl[ \div(\widetilde  a_{t}(x,\widetilde  m_t) \widetilde  m_{t})+   \widetilde  m_{t} D_{p} \widetilde  H_t(x,D\widetilde u_{t},\widetilde  m_t) 
 \bigr] dt 
%\\
%& \hskip2.5in  
\ \  {\rm in } \ \   (0,T) \times \R^d,
\end{align*}
and
\item\label{def.boundary} $\widetilde  m_0=\bar m_0$ and $\widetilde  u_T= \widetilde  G(\cdot,\widetilde  m_T)$ $\PP$-a.s. 
\end{enumerate}
We say that the solution is strong if $(\widetilde  u,M,\widetilde  m)$ is adapted to the complete filtration generated by $W$. 
\end{definition}

\begin{remark}\label{R:originalsolution?}
In this paper, we give meaning to the original system \eqref{e.MFGstoch} by solving \eqref{e.MFGstochBisbody}. We do not explore whether the weak solution of the transformed system leads to some notion of solution $(u,v,m)$, and indeed, it is not clear whether a process $\widetilde v$ as in \eqref{martingalebody} can be extracted from a probabilistically weak solution $(\widetilde u, M, \widetilde m)$. On the other hand, in the theory of mean field games, one is typically interested in the value function $u$ and the density of agents $m$, which can be recovered as follows:
\[
	u_t(x) = \widetilde u_t(x + \sqrt{2\beta} W_t), \quad m_t = (\Id - \sqrt{2\beta} W_t)_\# \widetilde m_t.
\]
\end{remark}

\begin{remark}\label{R:martginaledist}
In view of \eqref{martingalebody} and Definition \ref{def.weaksolMGF}, the term $\div_x v$ appearing in \eqref{e.MFGstoch} would be a distribution which is the derivative of a measure in $\mcl M_\loc$. The study of the transformed system \eqref{e.MFGstochBisbody} then has another advantage: not only is the forward Fokker-Planck equation nonstochastic, but also, all terms in the Hamilton-Jacobi-Bellman are, at worst, distributions belonging to $\mcl M_\loc \cap W^{-1,\oo}$.
\end{remark}

\begin{remark}
	It does not appear to be the case that the transformations \eqref{changevariablebody} - \eqref{martingalebody} have a regularizing or averaging effect, a phenomenon which has been exploited for ODEs with rough or stochastic forcing \cite{D_07,FGP_10,CG_16}. This is because $u$ and $m$ are themselves stochastic processes adapted to the same filtration as $W$. 
\end{remark}

\subsection{Existence of a weak solution} We are able to prove the existence of a weak solution, that is, the tuple $(\Omega, \mbb F, \mbb P, W, \widetilde  u, M, \widetilde  m)$ satisfying Definition \ref{def.weaksolMGF}, under
general assumptions on the deterministic functions $a$, $H$, and $G$, recalling that $\widetilde  a$, $\widetilde  H$, and $\widetilde  G$ are then defined with the constructed Wiener process $W$ through \eqref{pierre0body}, \eqref{takis0body}, and \eqref{pierre02body}. 

In the case where $a \equiv 0$, treated in \cite{CSmfg}, or in the nondegenerate setting of \cite{CDLL}, existence and uniqueness of a strong solution is established by assuming monotonicity and a separated structure from the beginning. Our existence result is more general than these even when $a$ is either zero or nondegenerate, since we generalize to nonseparated dependence, and assume nothing more about the dependence on the measure variable $m$ except mild regularity.

More precisely, we will assume that
\begin{equation}\label{A:stochMFG}
	\left\{
	\begin{split}
		&a_\cdot(\cdot,m): [0,T] \times \RR^d \to \mbb S^d, \; G(\cdot,m) : \RR^d \to \RR, \\
		&\qquad \text{and } H_\cdot(\cdot,\cdot,m): [0,T] \times \RR^d \times \RR^d \to \RR\\
		&\text{satisfy \eqref{A:MAINdiffusion}, \eqref{A:MAINterminal}, \eqref{A:MAINHamiltonian}, and \eqref{A:MAINHcondition} uniformly over $m \in \mcl P_2(\RR^d)$,}\\
		&\text{and, for all $R > 0$, uniformly over $(t,x,p) \in [0,T] \times \RR^d \times B_R$},\\
		&m \mapsto (\sigma_t(x,m), G(x,m), H_t(x,p,m)) \text{ is Lipschitz in the $\mcl P_2(\RR^d)$ distance.}
	\end{split}
	\right.
\end{equation}
%
%
%\begin{equation}\label{A:diffusionMFG}
%	\left\{
%	\begin{split}
%	&a_t(x,m) =  \sigma_t(x,m) \sigma_t(x,m)^\tau, \text{ where, for some $d_0 \le d$,}\\
%	& \sigma :[0,T]\times \R^d\times \mathcal P_2(\R^d)\to  \RR^{d \times d_0}\text{ is Lipschitz continuous in all variables, and}\\
%	&\sup_{t,m \in [0,T] \times \mcl P_2(\RR^d)} \| \sigma_t(\cdot,m)\|_{C^{1,1}(\RR^d)}<\infty,
%	\end{split}
%	\right.
%\end{equation}
%\begin{equation}\label{A:HamiltonianMFG}
%	\left\{
%	\begin{split}
%	&\text{for all $R> 0$, } \RR^d \times B_R \times \mathcal P_2(\R^d) \ni (p,x,m) \mapsto \widetilde  H_t(x,p,m) \quad \text{is Lipschitz uniformly over $t \in [0,T]$,}\\
%	&H_\cdot(\cdot,\cdot,m) \text{ satisfies \eqref{A:Hamiltonian} and \eqref{A:Hcondition} uniformly for } m \in \mcl P_2(\RR^d),
%	\end{split}
%	\right.
%\end{equation}
%%\begin{equation}\label{A:LagrangianMFG}
%%	\left\{
%%	\begin{split}
%%	&\text{the Legendre transform } \widetilde  H_t^*(x,\alpha,m) := \sup_{p \in \RR^d} \left(p \cdot \alpha - \widetilde  H_t(x,p,m)\right) \text{ satisfies, for some $\lambda_0, C_0 > 0$,}\\
%%	&\sup_{|z| = 1} D^2_{xx} \widetilde  H_t^*(x,\alpha,m)z \cdot z \le C_0 + \lambda_0 \widetilde  H_t^*(x,\alpha,m) \quad \text{for all }x,\alpha \in \RR^d,\; m\in \mathcal P_2(\R^d) \text{ and } t \in [0,T],
%%	\end{split}
%%	\right.
%%\end{equation}
%\begin{equation}\label{A:terminalMFG}
%	G: \RR^d \times \mathcal P_2(\R^d) \to \RR \quad \text{is bounded, Lipschitz in both variables and semiconcave in $x$ uniformly in $m$}, 
%\end{equation}
%and 
In order to exploit compactness in $\mcl P_2(\RR^d)$, we also assume that the initial distribution satisfies
\begin{equation}\label{A:barm0MFG}
\bar m_0 \in \mathcal P_p(\R^d) \text{ for some $p>2$ and has a bounded density.}
\end{equation}

\begin{theorem}\label{T.ExistenceMFG} 
Assume \eqref{A:stochMFG} and \eqref{A:barm0MFG}. Then there exists a weak solution of \eqref{e.MFGstochBisbody}. 
\end{theorem}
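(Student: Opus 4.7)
The plan is to cast \eqref{e.MFGstochBisbody} as a fixed point for the measure argument $\widetilde m$, regularize the probability space by discretizing the common noise so that compactness becomes available, apply a standard fixed point theorem at the discrete level, and finally pass to the limit using the HJB stability result Theorem \ref{T:comparison2}. For a fixed $\FF$-adapted process $\widetilde m \in L^\oo(\Omega, C([0,T], \mcl P_2(\RR^d) \cap L^\oo(\RR^d)))$, the frozen coefficients $\widetilde a_t(\cdot, \widetilde m_t)$, $\widetilde H_t(\cdot, \cdot, \widetilde m_t)$ and $\widetilde G(\cdot, \widetilde m_T)$ inherit \eqref{A:MAINdiffusion}--\eqref{A:MAINHcondition} uniformly in $\omega$ thanks to Remark \ref{R:transform} and \eqref{A:stochMFG}. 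Theorems \ref{T:existence} and \ref{T:comparison} then produce a unique adapted pair $(\widetilde u, M)$ solving the backwards stochastic HJB equation, and the optimal drift $b_t(x) := D_p \widetilde H_t(x, D\widetilde u_t(x), \widetilde m_t)$ is uniformly bounded, $\omega$-by-$\omega$ with divergence bounded above (by the uniform semiconcavity of $\widetilde u$). Solving the deterministic forward Fokker--Planck equation
\[
\del_t \widetilde n_t = \div\bigl[\div(\widetilde a_t(x, \widetilde m_t) \widetilde n_t) + \widetilde n_t b_t\bigr], \qquad \widetilde n_0 = \bar m_0,
\]
pathwise produces a unique $\widetilde n \in L^\oo(\Omega, C([0,T], \mcl P_2 \cap L^\oo))$, with moment and $L^\oo$-bounds depending only on $\bar m_0$ and the data (via \eqref{A:barm0MFG} and the divergence bound on $b$). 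Defining $\Phi(\widetilde m) := \widetilde n$, a fixed point of $\Phi$ is precisely a weak solution in the sense of Definition \ref{def.weaksolMGF}.

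The obstacle is that the target space of $\Phi$ is too large to be compact: the a priori estimates control $\widetilde m$ uniformly, but the underlying $\omega$-dependence lives on an infinite-dimensional space. Following the discretization strategy of Carmona--Delarue--Lacker \cite{carmona2016mean}, I fix integers $n, N$ and replace $W$ by a process $W^{n,N}$ whose increments on the time mesh $\{kT/N\}_{k=0}^N$ are projected onto a finite grid of mesh $1/n$ in $\RR^d$. The associated filtration $\FF^{n,N}$ has finitely many atoms on each interval, so the subset $\mcl K^{n,N}$ of $\FF^{n,N}$-adapted processes in $L^\oo(\Omega, C([0,T], \mcl P_2 \cap L^\oo))$ obeying the a priori bounds is finite-dimensional, convex, and, after equipping it with the topology of $\PP$-a.s. convergence in $C([0,T], \mcl P_2(\RR^d))$, is compact by Arzel\`a--Ascoli together with the moment bound from \eqref{A:barm0MFG}. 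The continuity of the corresponding $\Phi^{n,N}$ on $\mcl K^{n,N}$ follows from Theorem \ref{T:comparison2} (continuity of $\widetilde m \mapsto \widetilde u$ in the coefficients) and the stability of linear Fokker--Planck equations in bounded, locally Lipschitz drifts. Schauder's theorem thus yields a solution $(\widetilde u^{n,N}, M^{n,N}, \widetilde m^{n,N})$ of the discretized system driven by $W^{n,N}$.

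To pass to the limit as $n, N \to \oo$, I record that $\widetilde u^{n,N}$ is uniformly bounded, Lipschitz, and semiconcave on $[0,T] \times \RR^d$ (Lemma \ref{L:SCprops} together with the discrete version of Proposition \ref{P:dethjb}), that $\widetilde m^{n,N}$ is uniformly H\"older continuous in time with values in $\mcl P_2(\RR^d)$ and uniformly bounded in $L^\oo$, and that $M^{n,N}$ is a bounded martingale in $\mcl M_\loc \cap W^{-1,\oo}$. The laws of $(W^{n,N}, \widetilde u^{n,N}, M^{n,N}, \widetilde m^{n,N})$ are therefore tight on a suitable Polish product space, and Skorokhod's representation yields, on a new filtered space $(\Omega, \FF, \PP)$ and along a subsequence, almost-sure convergence to a limit $(W, \widetilde u, M, \widetilde m)$, where $W$ is a standard $\FF$-Brownian motion. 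Lemma \ref{L:SCprops}\eqref{L:Duconverge} gives a.e.\ convergence of $D\widetilde u^{n,N}$, Theorem \ref{T:comparison2} identifies $\widetilde u$ as the unique HJB solution attached to the limiting $\widetilde m$ and the limiting coefficients defined from $W$ through \eqref{pierre0body}--\eqref{pierre02body}, and the linearity of the forward equation lets $\widetilde m^{n,N} \to \widetilde m$ yield the FP equation in the limit.

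The main obstacle in executing this plan is the passage to the limit in the martingale $M^{n,N}$: the filtrations $\FF^{n,N}$ depend on the discretization and $M^{n,N}$ takes values in a space of distributions, so after Skorokhod extraction one has to identify the correct ``limit filtration'' (namely the completion of the one generated jointly by $W$ and $\widetilde m$) and verify that $M$ is a martingale with respect to it. This step exploits the $\mcl M_\loc \cap W^{-1,\oo}$ bounds on $M^{n,N}$, the uniform integrability of $\langle M^{n,N}_t, \varphi\rangle$ for $\varphi \in C_c(\RR^d)$, and the fact that the approximating coefficients converge in the sense required by Theorem \ref{T:comparison2}; it is also the reason why only a weak solution, in the sense of Definition \ref{def.weaksolMGF}, is produced by this argument.
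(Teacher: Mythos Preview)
Your overall architecture---discretize the common noise, apply a fixed point at the discrete level, pass to the limit via tightness---matches the paper's. But there is a genuine gap at the fixed-point step, and a difference in how the limit is taken that is worth noting.

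\textbf{The fixed-point map is not single-valued.} You define $\Phi(\widetilde m) = \widetilde n$ by solving the forward Fokker--Planck equation and assert that this ``produces a unique $\widetilde n$''. This is not justified, and is precisely the reason the paper uses Kakutani's theorem for set-valued maps rather than Schauder. The drift $b_t = D_p\widetilde H_t(\cdot, D\widetilde u_t, \widetilde m_t)$ is merely bounded with $\div b$ bounded from above; it is \emph{not} locally Lipschitz (as your continuity argument for $\Phi^{n,N}$ presumes), since $D\widetilde u$ is only the gradient of a semiconcave function and hence at best $BV_\loc$. With degenerate diffusion and such a drift, uniqueness for the linear FP equation is not available in general, and neither is continuous dependence of the solution on the drift. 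The paper instead constructs \emph{some} solution $\mu$ by vanishing viscosity (Lemma~\ref{L:mumu2}), defines $\mathcal E(\widetilde m)$ to be the (convex) set of all such solutions satisfying the a priori bounds, checks that the graph of $\mathcal E$ is closed, and applies Kakutani. Your argument can be repaired this way, but as written Schauder does not apply.

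\textbf{The limit in $M^{n,N}$.} You correctly flag this as the delicate point, but the paper sidesteps it entirely: rather than extracting a limit of $M^{n}$, the paper passes to the limit only in the \emph{law} of $(\widetilde m^n, W)$ on the path space $\Omega^*$, obtains a limiting measure $P$ and filtration $\FF^P$, and then \emph{solves the backward HJB equation afresh} on $(\Omega^*, \FF^P, P)$ via Theorems~\ref{T:existence}--\ref{T:comparison}. The pair $(\widetilde u, M)$ is thus constructed directly on the limit space, not obtained as a limit; what must be passed to the limit is only $D\widetilde u^n \to D\widetilde u$ (handled via Theorem~\ref{T:comparison2} and optimal couplings, Lemma~\ref{lem.stabilo}) in order to verify the limiting FP equation. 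This avoids the filtration-identification problem you describe. Your Skorokhod route is viable in principle, but you would need to carry out the martingale identification you only allude to.
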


\subsubsection{The discretized system} As a first step, we construct a (strong) solution of a discretized version of \eqref{e.MFGstochBisbody}. More precisely, rather than discretizing the time and state spaces, we directly discretize the canonical probability space of paths. This leads to a fixed point problem on a metric space which, due to the finiteness of the probability space, is now compact, for which Kakutani's fixed point theorem is well-suited.

Let $\Omega^0=C([0,T], \R^d)$, $\FF^0 = (\FF^0_t)_{t \in [0,T]}$ be the filtration generated by the canonical process $W \in \Omega^0$, and $\PP^0$ be the Wiener measure on $\Omega^0$, so that $W$ is a Brownian motion under $\PP^0$. 

The discretization of $\Omega^0$ is done through a certain projection map onto piecewise affine paths, whose slopes lie in a finite but possibly large set. The properties of this construction are outlined in the next lemma, whose proof we omit. A similar procedure is discussed in detail in the work of Carmona, Delarue, and Lacker \cite[2.4.2]{carmona2016mean}.

\begin{lemma}\label{L:discproj}
For $n \in \NN$, there exists a map $\pi^n:\Omega^0\to \Omega^0$ such that the following hold:
\begin{enumerate}[(i)]
\item $\pi^n$ is nonanticipative and Borel measurable,
\item for any $x\in \Omega^0$ and $i = 0,1,2,\ldots, 2^n - 1$, $\pi^n(x)$ is affine on
\[
	[t^n_i, t^n_{i+1}] := \left[\frac{Ti}{2^n}, \frac{T(i+1)}{2^{n}} \right],
\] 
\item the increments $(\pi^n(x)(t^n_{i+1})-\pi^n(x)(t^n_{i}))_{i = 0,1,2,\ldots, 2^n - 1}$ belong to a finite set depending only on $n$, independent of $x$, and,
\item if $\FF^n = (\FF^n_t)_{t \in [0,T]}$ denotes the piecewise-constant, right-continuous filtration generated by $\pi^n(W)$, then 
\begin{equation}\label{piecewiseWn}
	\begin{split}
	&\lim_{n \to \oo} \E_{\PP^0}\left[\sup_{t \in [0,T]} |\pi^n(W)_t-W_t|\right]=0 \quad \text{and}\\
	&\bigcup_{n \in \NN} \FF^n_t = \FF^0_t \quad \text{for all } t \in [0,T].
	\end{split}
\end{equation}
\end{enumerate}
\end{lemma}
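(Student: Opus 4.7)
My plan is to build $\pi^n$ by quantizing the dyadic increments of each path into a finite set and then piecewise-affinely interpolating the partial sums of the quantized increments.

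Fix $n \in \NN$, set $h_n := T/2^n$, and choose parameters $R_n \uparrow \oo$ and $\eps_n \downarrow 0$ (for instance, $R_n = n$ and $\eps_n = 2^{-2n}$). I partition $\oline{B_{R_n}} \subset \RR^d$ into finitely many disjoint Borel cells $A^n_1,\ldots,A^n_{J_n}$ of diameter at most $\eps_n$, and let $A^n_0 := \RR^d \setminus \oline{B_{R_n}}$; fix representatives $y^n_j \in A^n_j$ for $j \ge 1$ and set $y^n_0 := 0$. Given $x \in \Omega^0$, denote $\Delta^n_i(x) := x(t^n_{i+1}) - x(t^n_i)$, let $j^n_i(x) \in \{0,\ldots,J_n\}$ be the unique index with $\Delta^n_i(x) \in A^n_{j^n_i(x)}$, and set $\widehat\Delta^n_i(x) := y^n_{j^n_i(x)}$. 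Finally, take $\pi^n(x)$ to be the continuous function that is affine on each $[t^n_i, t^n_{i+1}]$ and satisfies $\pi^n(x)(t^n_i) = \sum_{k < i} \widehat\Delta^n_k(x)$. Property (ii) is immediate, (iii) holds because the increments lie in the finite set $\{y^n_0,\ldots,y^n_{J_n}\}$, (i) holds since each $j^n_i$ is Borel, and the nonanticipative property follows because $\pi^n(x)(t^n_i)$ depends only on $(x(t^n_k))_{0 \le k \le i}$.

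The core of the argument is (iv). I will bound
\begin{equation*}
\sup_{t \in [0,T]} |\pi^n(W)_t - W_t| \le \max_{i} |S^n_i| + \max_{i} |\widehat\Delta^n_i(W)| + \max_{i} \sup_{t \in [t^n_i, t^n_{i+1}]} |W_t - W_{t^n_i}|,
\end{equation*}
where $S^n_i := \sum_{k < i} (\widehat\Delta^n_k(W) - \Delta^n_k(W))$. Each summand in $S^n_i$ has magnitude at most $\eps_n$ on the event $\{|\Delta^n_k(W)| \le R_n\}$ and equals $|\Delta^n_k(W)|$ otherwise (since $y^n_0 = 0$); Gaussian tail bounds at scale $\sqrt{h_n}$ then give $\E_{\PP^0}|\Delta^n_k(W)| \ind_{\{|\Delta^n_k(W)|>R_n\}} \le C e^{-cR_n^2/h_n}$, and summing over $k$ yields $\E_{\PP^0}\max_i |S^n_i| \le 2^n(\eps_n + Ce^{-cR_n^2/h_n})$. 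The classical Brownian modulus controls the last maximum by $O(\sqrt{h_n \log 2^n})$, and the middle maximum is handled by the triangle inequality $|\widehat\Delta^n_i(W)| \le |\Delta^n_i(W)| + \eps_n$ together with another Gaussian maximum estimate. With the chosen parameters all three contributions tend to zero, which gives the first part of \eqref{piecewiseWn}. The filtration identity then follows because a subsequence of $\pi^n(W)$ converges $\PP^0$-almost surely to $W$ uniformly on $[0,T]$, so each $W_s$ is $\bigvee_n \FF^n_s$-measurable, while the reverse inclusion $\FF^n_t \subset \FF^0_t$ holds by the Borel construction.

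The main obstacle is the simultaneous calibration of the two parameters: $\eps_n$ must be small enough that the cumulative quantization error $2^n \eps_n$ vanishes, $R_n$ must be large enough that the tail factor $2^n e^{-cR_n^2/h_n}$ vanishes, and yet the partition size $J_n \lesssim (R_n/\eps_n)^d$ must remain finite at each stage. The choice $\eps_n = 2^{-2n}$, $R_n = n$ sits comfortably inside this feasible region, so only the bookkeeping of constants is delicate.
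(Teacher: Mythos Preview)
The paper does not give its own proof of this lemma---it explicitly omits it and refers to Carmona--Delarue--Lacker---so your general strategy of quantizing the dyadic increments and interpolating affinely is exactly in the spirit of what is intended, and your analysis of the three error terms in (iv) is clean and correct.

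There is, however, a genuine gap in the construction as you have written it. You set the slope of $\pi^n(x)$ on $[t^n_i,t^n_{i+1}]$ equal to $\widehat\Delta^n_i(x)$, which is a Borel function of $x(t^n_{i+1})-x(t^n_i)$. Consequently, for any $t\in(t^n_i,t^n_{i+1})$, the value $\pi^n(x)_t$ depends on $x(t^n_{i+1})$, so $\pi^n$ is \emph{not} nonanticipative at intermediate times; your justification only checks the grid points. This is not cosmetic: the right-continuous filtration generated by $\pi^n(W)$ then satisfies $\FF^n_t\supset\sigma(\widehat\Delta^n_i(W))$ for every $t>t^n_i$, which is not contained in $\FF^0_t$ when $t<t^n_{i+1}$. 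Thus the inclusion $\FF^n_t\subset\FF^0_t$ that you invoke at the end of the argument fails, and with it the filtration identity in (iv). This inclusion is also used elsewhere in the paper (for instance in the independence step of Lemma~\ref{L:Pnlimit}), so it is essential.

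The fix is minor but must be made explicit: shift the interpolation by one step, using $\widehat\Delta^n_{i-1}(x)$ (with $\widehat\Delta^n_{-1}:=0$) as the slope on $[t^n_i,t^n_{i+1}]$. Then $\pi^n(x)|_{[0,t]}$ depends only on $x|_{[0,t^n_i]}$ for $t\in[t^n_i,t^n_{i+1}]$, so (i) holds and $\FF^n_t\subset\FF^0_{t^n_i}\subset\FF^0_t$. The lag introduces one extra error term, $\max_i|\Delta^n_i(W)|$, in your uniform bound, but this is already controlled by the same $O(\sqrt{h_n\log 2^n})$ estimate you use for the third term, so the convergence in (iv) is unaffected.
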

%
%Given $n\in \N$ large we define the time grid $(t^n_i=\frac{Ti}{2^n})_{i=0,\dots,2^n}$ and a finite space grid $E^n$; we assume that $E^{n+1}\subset E^n$ and that {\color{red} (IL FAUT PLUS DE CONDITIONS)}. We denote by $\pi^n$ a (Borel measurable) projection from $\R^d$ into $E^n$. Given a continuous map $x\in \Omega^0$, we denote by $\hat x^n$ the continuous map such that $\hat x^n_t=x_0$ on $[0,t^n_1]$, $\hat x^n_{t^n_{i+1}}-\hat x^n_{t^n_i}= \pi^n(x_{t^n_{i}}-x_{t^n_{i-1}})$ for $i=1, \dots, n$ and $\hat x^n$ is piecewise linear on the intervals $[t^n_i, t^n_{i+1}]$. We consider $\hat W^n$ the process obtained this way and denote by $\FF^n$ the finite, right-continuous filtration generated by $\hat W^n$. We note that $\FF^n\subset \FF^0$ and that $\bigcup_n \FF^n= \FF^0$.  

We now construct the discretized variant of \eqref{e.MFGstochBisbody} by defining $\widetilde  a^n$, $\widetilde  H^n$, and $\widetilde  G^n$ from $a$, $H$ and $G$ just as in \eqref{pierre0body}, \eqref{takis0body} and \eqref{pierre02body}, replacing $W$ in each with the discrete projection $\pi^n(W)$. We then seek a solution $(\widetilde  u^n, M^n, \widetilde  m^n)$ of 
\begin{equation}\label{eq.discMFG}
\left\{
	\begin{split}
	&d \widetilde  u^n_{t} = \bigl[ -\tr(\widetilde  a^n_t(x,m^n_t) D^2 \widetilde  u^n_t)+ \widetilde  H^n_t(x,D\widetilde  u^n_{t},\widetilde  m^n_t)  \bigr] dt + dM^n_{t}
\ \ \text{in} \ \ (0,T) \times \R^d,\\
	&d \widetilde  m^n_{t} = \div\bigl[  \div(\widetilde  a^n_{t}(x,\widetilde  m^n_t) \widetilde  m^n_{t})+ \widetilde  m^n_{t} D_{p} \widetilde  H^n_t(x,D\widetilde  u^n_{t},\widetilde  m^n_t) 
\bigr] dt  \ \  \text{in} \ \   (0,T) \times \R^d, \\
	&\widetilde   m^n_0=\bar m_0,\ \ \widetilde  u^n_T(x)= \widetilde  G^n(x,\widetilde  m^n_T) \ \  \text{in} \ \   \R^d.
	\end{split}
	\right.
\end{equation}

\begin{proposition}\label{P:discreteMFG}
There exists a solution $(\widetilde  u^n,M^n,\widetilde  m^n)$ of \eqref{eq.discMFG} adapted to the discrete filtration $\FF^n$.
\end{proposition}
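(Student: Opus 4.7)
The strategy is to apply a fixed point theorem to the map that sends a candidate measure flow $\widetilde m^n$ (adapted to $\mathbb F^n$) to the measure flow obtained by first solving the backward stochastic HJB equation with data depending on $\widetilde m^n$, and then propagating the Fokker--Planck equation forward with the resulting drift. The key feature exploited throughout is that, because $\pi^n(W)$ has only finitely many possible paths and $\mathbb F^n$ is piecewise constant, an $\mathbb F^n$-adapted measure flow is described by finitely many deterministic trajectories in $C([0,T], \mathcal P_2(\RR^d) \cap L^\infty(\RR^d))$ indexed by the finite tree of possible histories of $\pi^n(W)$. This reduces the fixed point problem to a finite-dimensional cartesian product of deterministic path spaces, in which compactness becomes available.

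\textbf{The candidate set.} I would introduce the set $\mathcal K^n$ of $\mathbb F^n$-adapted maps $\widetilde m: \Omega^0 \times [0,T] \to \mathcal P_2(\RR^d) \cap L^\infty(\RR^d)$ satisfying, for constants $K_1, K_2, K_3$ to be chosen, $\widetilde m_0 = \bar m_0$, $\|\widetilde m_t\|_{\infty,\RR^d} \le K_1$, $\int_{\RR^d} |x|^p \widetilde m_t(dx) \le K_2$ with $p$ as in \eqref{A:barm0MFG}, and $\mathbf d_2(\widetilde m_s, \widetilde m_t) \le K_3 |t-s|^{1/2}$. Each of these conditions is convex in $\widetilde m$, and via the finite tree structure of $\mathbb F^n$, Arzel\`a--Ascoli together with tightness (guaranteed by the $p$-moment bound with $p > 2$) make $\mathcal K^n$ compact in $L^\infty(\Omega^0, C([0,T], \mathcal P_2(\RR^d)))$ endowed with the product topology coming from each leaf of the tree. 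The constants $K_1, K_2, K_3$ will be fixed at the end so that $\mathcal K^n$ is stable under the map below.

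\textbf{The map and its fixed point.} Given $\widetilde m \in \mathcal K^n$, the data $(\widetilde a^n_t(x,\widetilde m_t), \widetilde H^n_t(x,p,\widetilde m_t), \widetilde G^n(x,\widetilde m_T))$ satisfy \eqref{A:stochHJB} uniformly, so Theorem \ref{T:existence} produces a unique solution $(\widetilde u, M)$ of the backward HJB, $\mathbb F^n$-adapted by construction. The velocity field $b_t(x) := D_p \widetilde H^n_t(x, D\widetilde u_t(x), \widetilde m_t)$ is then a bounded, $\mathbb F^n$-adapted vector field whose distributional divergence is bounded above, by convexity of $\widetilde H^n$ in $p$ combined with the semiconcavity of $\widetilde u$ provided by Definition \ref{D:solution}\eqref{solutionbounds}; combined with the $C^{1,1}$ regularity of $\widetilde \Sigma^n$ in \eqref{A:MAINdiffusion}, this is enough to run the standard duality/existence theory for the forward Fokker--Planck equation
\[
  \partial_t \widetilde m - \mathrm{div}\bigl(\mathrm{div}(\widetilde a^n_t(x,\widetilde m)\widetilde m) + \widetilde m \, b_t\bigr) = 0, \qquad \widetilde m_0 = \bar m_0,
\]
pathwise and on each leaf of the tree, and to obtain bounds on $\|\widetilde m'_t\|_\infty$, on $p$-moments of $\widetilde m'_t$, and on $\mathbf d_2(\widetilde m'_s, \widetilde m'_t)$ that depend only on the data. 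Defining $\Phi(\widetilde m)$ as the set of all $\widetilde m'$ produced this way, a suitable choice of $K_1, K_2, K_3$ makes $\Phi(\mathcal K^n) \subset \mathcal K^n$, and $\Phi(\widetilde m)$ is convex by linearity of the Fokker--Planck equation in the unknown.

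\textbf{Closed graph and conclusion.} The remaining point, which I expect to be the main difficulty, is verifying that $\Phi$ has a closed graph. For a sequence $\widetilde m^{(k)} \to \widetilde m$ in $\mathcal K^n$ with $\widetilde m'^{(k)} \in \Phi(\widetilde m^{(k)})$ converging to some $\widetilde m'$, one first uses the Lipschitz dependence of $(\widetilde a^n, \widetilde H^n, \widetilde G^n)$ on the measure variable and the stability Theorem \ref{T:comparison2} to deduce local uniform convergence of the associated $\widetilde u^{(k)}$; then, thanks to the uniform semiconcavity supplied by Definition \ref{D:solution}\eqref{solutionbounds}, Lemma \ref{L:SCprops}\eqref{L:Duconverge} gives almost-everywhere convergence of $D\widetilde u^{(k)}$, hence of the drifts $b^{(k)}$. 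Passing to the limit in the Fokker--Planck equation (weak convergence of $\widetilde m'^{(k)}_t$ against test functions, combined with the bounded convergence of $b^{(k)}$) yields $\widetilde m' \in \Phi(\widetilde m)$. Kakutani's fixed point theorem then produces $\widetilde m^n \in \Phi(\widetilde m^n)$, and the associated $(\widetilde u^n, M^n)$ from the HJB step delivers the triple required by the proposition. The hard part, as indicated, is handling the passage to the limit in the Fokker--Planck equation with the nonlinear, merely a.e. convergent drift; this is precisely where both the semiconcavity framework of Section \ref{sec.HJ} and the stability Theorem \ref{T:comparison2} are essential.
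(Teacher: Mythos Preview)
Your proposal is correct and follows essentially the same route as the paper: the paper defines the identical compact convex set $\mathscr M^n_R$ (your $\mathcal K^n$), builds the set-valued map $\mathcal E$ (your $\Phi$) by solving the backward HJB via Lemma \ref{L:umMm} and then the Fokker--Planck equation via Lemma \ref{L:mumu2}, verifies closed graph using exactly the stability and semiconcavity tools you name, and concludes with Kakutani. The only cosmetic difference is that the paper packages the HJB and Fokker--Planck steps into the two preparatory lemmas rather than describing them inline.
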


Proposition \ref{P:discreteMFG} is proved by setting up an appropriate fixed point problem. We define, for $R > 0$ (recall that $p > 2$ is as in \eqref{A:barm0MFG}),
\begin{align*}
	\mcl M^n_R := 
	\Bigg\{ &m \in L^\oo\big(\Omega^0; C([0,T], \mcl P_2(\RR^d)) \cap L^\oo([0,T] \times \RR^d) \big), \; m \text{ is $\FF^n$-adapted, } m_0 = \bar m_0,\\
	&\text{and } \norm{m}_{\oo,\Omega^0 \times [0,T] \times \RR^d}  +\esssup_{\omega \in \Omega^0} \sup_{t \in [0,T]} \int_{\RR^d} |x|^p m_t(x)dx\\
	&\qquad +\esssup_{\omega \in \Omega^0} \sup_{s,t \in [0,T], \, s \ne t} \frac{{\bf d}_2(m_s,m_t)}{|s-t|^{1/2}} \le R \Bigg\},
\end{align*}
along with the metric
\[
	{\bf d}_{\mcl M^n_R}(m,m') =  \E\left[\sup_{t\in [0,T]} {\bf d}_2(m_t,m'_t)\right].
\]
The space $\mcl M^n_R$ contains the constant process $\bar m_0$, and is thus nonempty, as long as
\[
	R \ge \norm{\bar m_0}_\oo + \int_{\RR^d} |x|^p \bar m_0(x) dx.
\]
Moreover, $\left( \mcl M^n_R, {\bf d}_{\mcl M^n}  \right)$ is a convex and compact metric space. Indeed, because the processes are $\FF^n$-adapted, they may be identified with random variables defined over a finite probability space, and they take values in a compact subset of $C([0,T], \mathcal P_2(\RR^d))$.

%Let $\mathscr M^n$  denote the set of processes $m$ with values in $C([0,T], \mathcal P_2(\R^d))$, that are $(\FF^n_t)_{t\in [0,T]}-$adapted and satisfying the following conditions (for a constant $C_0$ to be chosen below and independent of $m$): 
%\begin{itemize}
%\item[(i)] $\int_{\R^d} |x|^p m_t(dx)\leq C_0$ for any $t\in [0,T]$, 
%\item[(ii)] ${\bf d}_2(m_t,m_s)\leq C_0|t-s|^{1/2}$ for any $s,t\in [0,T]$, 
%\item[(iii)] for any $t\in [0,T]$, $m_t$ is absolutely continuous with respect to the Lebesgue measure with a density (denoted in the same way) bounded by $C_0$. 
%\end{itemize}
%The set $\mathcal M^n$ is compact for the distance 
%$$
%(m,m')\to \E\left[\sup_{t\in [0,T]} {\bf d}_2(m_t,m'_t)\right]
%$$ 
%because the elements of $\mathcal M^n$ are random variables with values in a compact subset of $C([0,T], \mathcal P_2(\R^d))$ and the source of randomness is finite. 

For a fixed $\widetilde  m \in \mcl M^n_R$, we consider the backward stochastic Hamilton-Jacobi equation  
\begin{equation}\label{E:discreteHJB}
\left\{
	\begin{split}
	&d \widetilde  u^{(\widetilde  m)}_{t} = \bigl[  -\tr(\widetilde  a^n_t(x,\widetilde  m_t) D^2 \widetilde  u^{(\widetilde  m)}_t)+ \widetilde  H^n_t(x, D\widetilde  u^{(\widetilde  m)}_{t},\widetilde  m_t)  \bigr] dt\\
	&\hspace{2.5cm} + dM^{(\widetilde  m)}_{t}
\ \  \text{in} \ \ (0,T) \times \R^d,\\
	&\widetilde  u^{(\widetilde  m)}_T(x)= \widetilde  G^n(x,\widetilde  m_T) \ \  \text{in} \ \   \R^d.
	\end{split}
\right.
\end{equation}

%We note that the processes $(u^m,M^m)$ are only c\`{a}dl\`{a}g since the filtration is discontinuous (piecewise constant).  

\begin{lemma}\label{L:umMm} For any $\widetilde  m \in \mcl M^n_R$, there exists a unique solution $(\widetilde  u^{(\widetilde  m)}, M^{(\widetilde  m)})$ of \eqref{E:discreteHJB}, and there exists a constant $C > 0$ that is independent of $R$ and $\widetilde  m \in \mcl M^n_R$ such that, with probability one,
	\[
		\nor{\widetilde  u^{(\widetilde  m)}}{\oo} + \nor{D\widetilde  u^{(\widetilde  m)}}{\oo} + \esssup m_+(D^2 \widetilde  u^{(\widetilde  m)} ) \le C.
	\]
	Moreover, the map
	\[
		\left( \mcl M^n_R, {\bf d}_{\mcl M^n} \right) \ni \widetilde  m \mapsto D\widetilde  u^{(\widetilde  m)} \in L^1(\Omega^0, L^1_\loc([0,T] \times\RR^d))
	\]
	is continuous.
\end{lemma}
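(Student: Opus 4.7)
The plan is to specialize the well-posedness theory from Section \ref{sec.HJ} to the backward HJB equation \eqref{E:discreteHJB} with the measure argument frozen at $\widetilde m$, and to combine the stability estimate of Theorem \ref{T:comparison2} with the semiconcavity-based gradient convergence in Lemma \ref{L:SCprops}\eqref{L:Duconverge}.

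For existence, uniqueness, and the uniform bound, I would first check that the frozen coefficients $(\omega,t,x,p)\mapsto \widetilde a^n_t(x,\widetilde m_t)$, $\widetilde H^n_t(x,p,\widetilde m_t)$, and $\widetilde G^n(x,\widetilde m_T)$ are $\mbb F^n$-adapted and satisfy assumption \eqref{A:stochHJB}, with constants independent of $\widetilde m\in\mathscr M^n_R$. This relies on two simple facts: the translation $y\mapsto y+\sqrt{2\beta}\pi^n(W)_t$ is an isometry of $(\mcl P_2(\RR^d),\mathbf d_2)$, and \eqref{A:stochMFG} makes \eqref{A:MAINdiffusion}--\eqref{A:MAINHcondition} hold uniformly in $m\in\mcl P_2(\RR^d)$. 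Theorems \ref{T:existence} and \ref{T:comparison} then provide the unique pair $(\widetilde u^{(\widetilde m)},M^{(\widetilde m)})$, and the uniform bound on $\|\widetilde u^{(\widetilde m)}\|_{\oo}+\|D\widetilde u^{(\widetilde m)}\|_{\oo}+\esssup m_+(D^2\widetilde u^{(\widetilde m)})$ is inherited from Lemma \ref{L:discretesolution}\eqref{L:bounds}, whose constants depend only on these uniform data (and in particular not on $\widetilde m$ or $R$).

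For the stability estimate, given $\widetilde m,\widetilde m'\in\mathscr M^n_R$, the isometry property and the Lipschitz-in-$\mathbf d_2$ assumption of \eqref{A:stochMFG} give
\[
\|\widetilde H^n(\cdot,\widetilde m) - \widetilde H^n(\cdot,\widetilde m')\|_{\oo,[0,T]\times\RR^d\times B_C}+\|\widetilde G^n(\cdot,\widetilde m_T)-\widetilde G^n(\cdot,\widetilde m'_T)\|_\oo\le L\sup_{t\in[0,T]}\mathbf d_2(\widetilde m_t,\widetilde m'_t),
\]
where $C$ is the uniform gradient bound from the previous step; the $C^{0,1}$-in-$x$ control of $\widetilde a^n(\cdot,\widetilde m)-\widetilde a^n(\cdot,\widetilde m')$ follows by interpolating the $L^\oo$ bound against the uniform $C^{1,1}$-in-$x$ bound coming from \eqref{A:MAINdiffusion}. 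Feeding this into Theorem \ref{T:comparison2}, whose constants depend on the (uniform) Lipschitz and semiconcavity bounds from the first step, yields, for each $R>0$,
\[
\sup_{t\in[0,T]}\EE\|\widetilde u^{(\widetilde m)}_t-\widetilde u^{(\widetilde m')}_t\|_{\oo,B_R}^{d+2}\le C_R\,\mathbf d_{\mathscr M^n}(\widetilde m,\widetilde m').
\]

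The main step---and the one that relies essentially on the weak-solution framework developed in this paper---is to upgrade convergence of values to convergence of gradients. Given a sequence $\widetilde m^k\to\widetilde m$ in $\mathbf d_{\mathscr M^n}$, the above estimate gives $\widetilde u^{(\widetilde m^k)}\to \widetilde u^{(\widetilde m)}$ in $L^{d+2}(\Omega^0\times[0,T];L^\oo(B_R))$ for every $R>0$; extracting a subsequence, $\widetilde u^{(\widetilde m^k)}_t\to\widetilde u^{(\widetilde m)}_t$ locally uniformly in $\RR^d$ for $\PP^0\otimes dt$-a.e. $(\omega,t)$. Combined with the uniform Lipschitz and semiconcavity bounds, Lemma \ref{L:SCprops}\eqref{L:Duconverge} then furnishes pointwise a.e. convergence of $D\widetilde u^{(\widetilde m^k)}$ to $D\widetilde u^{(\widetilde m)}$ on $\Omega^0\times[0,T]\times\RR^d$ along the subsequence. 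Since the gradients are uniformly bounded by $C$, dominated convergence upgrades this to convergence in $L^1(\Omega^0;L^1_\loc([0,T]\times\RR^d))$, and a standard subsequence-of-subsequences argument promotes the convergence to the full original sequence. The semiconcavity preserved along the approximation is exactly what allows this final gradient upgrade.
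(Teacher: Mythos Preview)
Your proposal is correct and follows essentially the same route as the paper: verify that the frozen coefficients satisfy \eqref{A:stochHJB} uniformly in $\widetilde m$, invoke Theorems \ref{T:existence} and \ref{T:comparison} for existence, uniqueness, and the uniform bounds, then use the $\mcl P_2$-Lipschitz dependence together with Theorem \ref{T:comparison2}, Lemma \ref{L:SCprops}\eqref{L:Duconverge}, and dominated convergence for the continuity of the gradient map. Your explicit interpolation to handle the $C^{0,1}$-norm of the diffusion difference and the subsequence-of-subsequences argument are details the paper leaves implicit; one small remark is that the uniform bound is more precisely a consequence of Theorem \ref{T:existence} itself (the limit of the approximations in Lemma \ref{L:discretesolution}\eqref{L:bounds}) rather than that lemma directly.
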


\begin{proof}
	In view of \eqref{A:stochMFG}, the random variables
	\[
		(t,x) \mapsto \left( \widetilde  a^n_t(x, \widetilde  m_t), \widetilde  H^n_t(x,\cdot,\widetilde  m_t), \widetilde  G^n_t(x,\widetilde  m_t) \right)
	\]
	satisfy \eqref{A:stochHJB} for the filtered probability space $(\Omega^0, \FF^n, \PP^0)$, with constants that are independent of $R$ and $\widetilde  m$. The existence and uniqueness of a solution satisfying the desired bounds is then a consequence of Theorems \ref{T:existence} and \ref{T:comparison}, and the continuity statement follows from the uniform Lipschitz continuity of $(\widetilde  a^n, \widetilde  H^n, \widetilde  G^n)$ in the $\mcl P_2$-variable, Theorem \ref{T:comparison2}, Lemma \ref{L:SCprops}\eqref{L:Duconverge}, and the bounded convergence theorem.
\end{proof}

\begin{lemma}\label{L:mumu2} 
Fix $\widetilde  m\in \mcl M^n_R$, and let $(\widetilde  u^{(\widetilde  m)}, M^{(\widetilde  m)})$ be as in Lemma \ref{L:umMm}. Then there exists a constant $\widetilde  R > 0 $, independent of $R$ and $\widetilde  m$, and an $\mbb F^n$-adapted process $\mu \in C([0,T], \mathcal P_2(\R^d)) \cap L^\oo([0,T] \times \RR^d)$ such that, with probability one,
\[
	\norm{\mu}_{\oo, [0,T] \times \RR^d} + \sup_{t \in [0,T]} \int_{\RR^d} |x|^p \mu_t(x) dx + \sup_{s,t \in [0,T], \, s \ne t} \frac{ {\bf d}_2(\mu_s, \mu_t)}{|s-t|^{1/2} } \le \widetilde  R,
\]
and, in the sense of distributions,
\begin{equation}\label{eq.mumu2}
\left\{
\begin{split}
	&d \mu_{t} = \div \bigl[  \div(\widetilde  a^n_{t}(x,\widetilde  m_t) \mu_{t})+  \mu_{t} D_{p} \widetilde  H^n_t(x, D\widetilde  u^{(\widetilde  m)}_{t},\widetilde  m_t) 
 \bigr] dt  \ \  \text{in} \ \   (0,T) \times \R^d, \\
	&\mu_0=\bar m_0\ \  \text{in} \ \   \R^d.
\end{split}
\right.
\end{equation}
%in the sense of distribution and which satisfies 
%\begin{itemize}
%\item[(i)] $\mu$  is adapted to the filtration $ (\FF^n_t)$, 
%\item[(ii)] $\int_{\R^d} |x|^2 \mu_t(dx)\leq C_2$ for any $(\omega,t)\in \Omega^0\times [0,T]$, 
%\item[(iii)] ${\bf d}_2(\mu_t,\mu_s)\leq C_2|t-s|^{1/2}$ for any $s,t\in [0,T]$ and $\omega\in \Omega$, 
%\item[(iv)] $\mu_t$ is absolutely continuous with respect to the Lebesgue measure with a density  bounded by $C_2$ for any $(\omega,t)\in \Omega^0\times [0,T]$. 
%\end{itemize}
\end{lemma}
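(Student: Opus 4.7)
The plan is to regularize both the drift and the diffusion, solve the resulting smooth linear parabolic equation by classical theory, and then pass to the limit using a priori estimates that are uniform in the regularization parameters, in $R$, and in $\widetilde  m \in \mathscr M^n_R$. Set $b_t(x) := D_p \widetilde  H^n_t(x, D\widetilde  u^{(\widetilde  m)}_t(x), \widetilde  m_t)$, which by Lemma \ref{L:umMm} and the assumptions on $\widetilde  H^n$ is uniformly bounded, but only Borel measurable in $x$. For $\delta, \epsilon > 0$, let $b^\delta_t(x) := D_p \widetilde  H^n_t(x, (\rho_\delta * D\widetilde  u^{(\widetilde  m)}_t)(x), \widetilde  m_t)$ and $\widetilde  a^{n,\epsilon}_t(x, m) := \widetilde  a^n_t(x, m) + \epsilon \Id$; classical linear parabolic theory then yields a unique smooth nonnegative solution $\mu^{\epsilon,\delta}$ of
\[
\partial_t \mu^{\epsilon,\delta}_t = \div\bigl[ \div(\widetilde  a^{n,\epsilon}_t(\cdot, \widetilde  m_t)\mu^{\epsilon,\delta}_t) + \mu^{\epsilon,\delta}_t b^\delta_t \bigr], \quad \mu^{\epsilon,\delta}_0 = \rho_\delta * \bar m_0,
\]
which is $\FF^n$-adapted since all the coefficients are.

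The heart of the proof is to derive bounds on $\mu^{\epsilon,\delta}$ independent of $\epsilon,\delta, R$, and $\widetilde  m$. For the $L^\infty$ bound I would rewrite the equation in nondivergence form,
\[
\partial_t \mu^{\epsilon,\delta} - \tr(\widetilde  a^{n,\epsilon} D^2 \mu^{\epsilon,\delta}) - (2\div \widetilde  a^{n,\epsilon} + b^\delta) \cdot D\mu^{\epsilon,\delta} = (\div\div \widetilde  a^{n,\epsilon} + \div b^\delta)\mu^{\epsilon,\delta},
\]
and observe that the zero-order coefficient is uniformly bounded above: $\div\div \widetilde  a^{n,\epsilon}$ is bounded because $\Sigma \in C^{1,1}$, and
\[
\div b^\delta_t = \tr\bigl[D^2_{xp}\widetilde  H^n_t(\cdot, \rho_\delta * D\widetilde  u^{(\widetilde  m)}_t, \widetilde  m_t)\bigr] + \tr\bigl[D^2_{pp}\widetilde  H^n_t(\cdot, \rho_\delta * D\widetilde  u^{(\widetilde  m)}_t, \widetilde  m_t) \cdot (\rho_\delta * D^2\widetilde  u^{(\widetilde  m)}_t)\bigr]
\]
is uniformly bounded above because the first summand is controlled by the local Lipschitz bound on $D_p\widetilde  H^n$, $D^2_{pp}\widetilde  H^n \ge 0$ by convexity of $\widetilde  H^n$ in $p$, and $\rho_\delta * D^2\widetilde  u^{(\widetilde  m)}_t \le C\Id$ by the semiconcavity of $\widetilde  u^{(\widetilde  m)}$ (Lemma \ref{L:umMm}) together with Lemma \ref{L:SCprops}\eqref{L:semiconcave}. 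The parabolic maximum principle then gives $\nor{\mu^{\epsilon,\delta}_t}{\oo} \le e^{Ct}\nor{\bar m_0}{\oo}$. For the $p$-th moment estimate I would test against $|x|^p$ (justified by smooth cutoffs and monotone convergence), controlling the resulting integrals by the uniform $L^\infty$ bounds on $b^\delta$ and $\widetilde  a^{n,\epsilon}$ together with Young's inequality, and then apply Gronwall to obtain a bound depending only on $\int_{\RR^d} |x|^p \bar m_0(x)\, dx$. The Hölder-$1/2$ estimate for $\mbf d_2(\mu^{\epsilon,\delta}_s, \mu^{\epsilon,\delta}_t)$ would follow from the probabilistic representation of $\mu^{\epsilon,\delta}$ as the law of the diffusion $dX_t = -b^\delta_t(X_t)\, dt + \sqrt{2\widetilde a^{n,\epsilon}_t(X_t,\widetilde  m_t)}\, dB_t$ with initial law $\rho_\delta * \bar m_0$, whose drift and diffusion coefficients are uniformly bounded.

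To finish, I would pass to the limit $\epsilon,\delta \to 0$. For each fixed $\omega$, the moment bound yields tightness in $\mcl P_2(\RR^d)$ and the Wasserstein Hölder bound yields equicontinuity on $[0,T]$, so Arzelà-Ascoli furnishes a subsequential limit $\mu \in C([0,T], \mcl P_2(\RR^d))$ satisfying the same bounds; the $L^\infty$ bound is preserved by lower semicontinuity. In the weak formulation tested against $\phi \in C^\infty_c(\RR^d)$, namely
\[
\frac{d}{dt} \int \phi\, d\mu^{\epsilon,\delta}_t = \int \tr(\widetilde  a^{n,\epsilon}_t(\cdot,\widetilde  m_t) D^2\phi)\, d\mu^{\epsilon,\delta}_t - \int D\phi \cdot b^\delta_t\, d\mu^{\epsilon,\delta}_t,
\]
the second-order term passes to the limit by narrow convergence of $\mu^{\epsilon,\delta}_t$ and uniform convergence $\widetilde  a^{n,\epsilon} \to \widetilde  a^n$, whereas the drift term requires combining Lemma \ref{L:SCprops}\eqref{L:Duconverge} (to get $\rho_\delta * D\widetilde  u^{(\widetilde  m)}_t \to D\widetilde  u^{(\widetilde  m)}_t$ almost everywhere), the continuity of $D_p\widetilde  H^n$ in $p$, and the uniform $L^\infty$ bound on $\mu^{\epsilon,\delta}$, in a dominated convergence argument. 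The main technical obstacle is precisely this limit passage in the drift, forced by the low regularity of $D\widetilde  u^{(\widetilde  m)}$; it is resolved by the one-sided divergence estimate on $b^\delta$, which crucially uses the interplay between the convexity of $\widetilde  H^n$ in $p$ and the semiconcavity of $\widetilde  u^{(\widetilde  m)}$ — the same structural duality that underpins the well-posedness theory of Section \ref{sec.HJ}.
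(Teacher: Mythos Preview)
Your proposal is correct and follows essentially the same approach as the paper: mollify the drift, add viscosity, obtain the $L^\infty$ bound via the maximum principle using the one-sided upper bound on $\div b^\delta$ (from convexity of $\widetilde H^n$ in $p$ and semiconcavity of $\widetilde u^{(\widetilde m)}$), derive moment and Wasserstein-H\"older bounds by standard arguments, and pass to the limit. The one point the paper makes explicit that you should as well is that the compactness step exploits the finiteness of the discrete probability space $(\Omega^0,\mathbb F^n,\mathbb P^0)$ to extract a common subsequence across $\omega$, so that the limit is genuinely $\mathbb F^n$-adapted rather than an $\omega$-dependent patchwork.
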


\begin{proof} For $\eps >0$, let $\rho_\eps$ be a standard mollifier as in \eqref{mollifier}, and set
\[
	\widetilde  u^{(\widetilde  m),\eps} = \rho_\eps * \widetilde  u^{(\widetilde  m)} \quad \text{and} \quad
	b^\eps_t(x) := -D_p \widetilde  H^n_t(x, D \widetilde  u^{(\widetilde  m),\eps}_t(x), \widetilde  m_t).
\]
We then consider the regularized equation 
\begin{equation}\label{eq.mumu2eps}
\left\{
	\begin{split}
	&\del_t \mu^\eps_{t} = \div \bigl[\eps D \mu_t^\eps+  \div(\widetilde  a^n_{t}(x,\widetilde  m_t) \mu^\eps_{t}) -  \mu^\eps_{t} b^\eps_t
 \bigr]  \ \  \text{in} \ \  (0,T) \times  \R^d, \\
	&\mu^\eps_0=\bar m_0\ \  \text{in} \ \   \R^d.
	\end{split}
	\right.
\end{equation}
With probability one, \eqref{eq.mumu2eps} is a uniformly elliptic equation with coefficients which are smooth and bounded in space and measurable in time. Therefore, there exists a unique solution in the sense of distributions, which is adapted to $\FF^n$, since $\widetilde  a^n$, $\widetilde  H^n$ and $\widetilde  u^{(\widetilde  m)}$ are. Standard arguments yield $\widetilde  R$ independent of $R$, $\widetilde  m$, and $\eps$ such that
\[
	\sup_{t \in [0,T]} \int_{\RR^d} |x|^p \mu^\eps_t(x) dx + \sup_{s,t \in [0,T], \, s \ne t} \frac{ {\bf d}_2(\mu^\eps_s, \mu^\eps_t)}{|s-t|^{1/2} } \le \widetilde  R.
\]
By Lemma \ref{L:umMm}, $\widetilde  u^{\widetilde  m,\eps}$ is uniformly Lipschitz continuous and semiconcave in space. This, along with the uniform convexity of $\widetilde  H^n$ in the gradient variable, implies that there exists $C > 0$ independent of $R$, $\widetilde  m$, and $\eps$ such that $\div_x b^\eps \ge -C$. The maximum principle then implies that $\mu^\eps$ is bounded in $L^\oo(\Omega^0 \times [0,T] \times \RR^d)$ independent of $\eps$, $R$, and $\widetilde  m$, and so, enlarging $\widetilde  R$ if necessary,
\[
	\norm{\mu^\eps}_{\oo,[0,T] \times \RR^d} + \sup_{t \in [0,T]} \int_{\RR^d} |x|^p \mu^\eps_t(x) dx + \sup_{s,t \in [0,T], \, s \ne t} \frac{ {\bf d}_2(\mu^\eps_s, \mu^\eps_t)}{|s-t|^{1/2} } \le \widetilde  R.
\]
%, there exists a constant $C_2$, depending on the constant $C_1$ in Lemma \ref{L:umMm} but not on $C_0$, $m$ or $\eps$, such that $\PP-$a.s.
%\begin{itemize}
%%\item[(i)] $\mu^\eps$ is adapted to $\bar \FF$, 
%\item[(i)] $\sup_{t\in [0,T]}\int_{\R^d} |x|^2 \mu^\eps_t(dx)\leq C_2$, 
%\item[(ii)] ${\bf d}_2(\mu^\eps_t,\mu^\eps_s)\leq C_2|t-s|^{1/2}$ for any $s,t\in [0,T]$, 
%\item[(iii)] for any $t\in [0,T]$, $\mu^\eps_t$ is absolutely continuous with respect to the Lebesgue measure with a density bounded by $C_2$. 
%\end{itemize}
Exploiting the finiteness of the probability space $(\Omega^0, \mbb F^n, \mbb P^0)$, we then extract a subsequence of $(\mu^\eps)$ that converges weakly-$*$ in $L^\infty(\Omega^0\times [0,T]\times \R^d)$ to some $\mu$, which, with probability one, is a solution of \eqref{eq.mumu2} with the desired properties.
\end{proof}

We now combine Lemmas \ref{L:umMm} and \ref{L:mumu2} to set up the fixed point argument.

\begin{proof}[Proof of Proposition \ref{P:discreteMFG}]
Set $R = \widetilde  R$, where $\widetilde  R$ is as in Lemma \ref{L:mumu2} (this is allowed because $\widetilde  R$ is independent of $R$). Given $\widetilde  m \in \mcl  M^n_R$, we let $\mathcal E(\widetilde  m)$ be the set of processes $\mu$ that solve \eqref{eq.mumu2} and satisfy the properties laid out in Lemma \ref{L:mumu2}. This defines a map taking values in convex subsets of $\mcl M^n_R$:
\[
	\mcl M^n_R \ni \widetilde  m \mapsto \mathcal E(\widetilde  m) \in 2^{\mcl M^n_R},
\]
and, in view of Lemma \ref{L:umMm} and the uniform Lipschitz continuity of the data in the $\mcl P_2$-variable, the graph of $\mcl E$ is closed. By the Kakutani fixed point theorem, there exists $\widetilde  m^n\in \mathcal M^n_R$ such that $\widetilde  m^n\in \mathcal E(\widetilde  m^n)$. Setting $\widetilde  u^n = \widetilde  u^{(\widetilde  m^n)}$ and $M^n = M^{(\widetilde  m^n)}$ gives the desired solution.
\end{proof}

\subsubsection{The passage to the limit} We begin by embedding the discrete problem above into a larger probability space. For $R = \widetilde  R$ as in the proof of Proposition \ref{P:discreteMFG}, we define
\begin{equation}\label{measurepathspace}
\begin{split}
	\Omega^* := &\Bigg\{ \omega = (m,w) \in C([0,T], \mcl P_2(\RR^d) \times \RR^d)\\
	& \cap L^\oo([0,T] \times \RR^d, \RR \times \RR^d): w_0 = 0, \, m_0 = \bar m_0, \quad \text{and}\\
	& \norm{m}_{\oo, [0,T] \times \RR^d} + \sup_{t \in [0,T]} \int_{\RR^d} |x|^p m_t(x)dx + \sup_{s,t \in [0,T], s \le t} \frac{{\bf d}_2(m_s,m_t)}{|s-t|^{1/2}} \le R \Bigg\}.
\end{split}
\end{equation}
The set $\Omega^*$ is a Polish space with the metric defined, for $\omega = (m,w)$ and $\omega' = (m',w')$, by
$$
	{\bf d}_{\Omega^*}(\omega, \omega')= \sup_{t\in [0,T]} \left( {\bf d}_2(m_t,m_t') + |w_t-w_t'| \right).
$$
We denote by $\FF = (\FF_t)_{t \in [0,T]}$ the filtration on $\Omega^*$ generated by the canonical process $\omega=(m,w)$.

Let $\mathcal P(\Omega^*)$ be the set of Borel probability measures on $\Omega^*$ with finite first order moment, endowed with the distance defined, for $P, P' \in \mcl P(\Omega^*)$,
$$
	{\bf d}_{\mcl P(\Omega^*)} (P, P') = \inf_{\pi} \E_\pi\left[ \bf{d}_{\bar\Omega}(\omega, \omega)\right]
$$
where the infimum is taken over the Borel probability measures $\pi$ on $\Omega^* \times \Omega^*$ with first marginal $P$ and second marginal $P'$. 

By Proposition \ref{P:discreteMFG}, for all $n \in \NN$, there exists a solution $(\widetilde  u^n, M^n, \widetilde  m^n)$ of the discrete MFG system \eqref{eq.discMFG}, which defines a probability measure $P^n$ on $\Omega^*$:
\begin{equation}\label{Pn}
	P^n := (\widetilde  m^n, W)_\sharp \mbb P^0 \in  \mathcal P(\bar\Omega).
\end{equation}
We can likewise express the various data as random variables on $\Omega^*$. Abusing notation slightly, we define, for $n \in \NN$, $(t,x,p) \in [0,T] \times \RR^d \times \RR^d$, and $\omega = (m,w) \in \Omega^*$,
\begin{equation}\label{tildean}
	\widetilde  a^n_t(x,\omega) := a_t(x+\sqrt{2\beta}\pi^n(w)_t, (\Id+\sqrt{2\beta}\pi^n(w)_t)_\sharp m_t),
\end{equation}
\begin{equation}\label{tildeHn}
	\widetilde  H^n_t(x,p,\omega) := \widetilde  H_t(x+\sqrt{2\beta} \pi^n(w)_t,p,(\Id+\sqrt{2\beta}\pi^n(w)_t)_\sharp m_t),
\end{equation}
\begin{equation}\label{tildeGn}
	\widetilde  G^n(x,\omega)= G(x+\sqrt{2\beta}\pi^n(w)_T, (\Id+\sqrt{2\beta}\pi^n(w)_T)_\sharp m_T),
\end{equation}
\begin{equation}\label{tildeun}
	\widetilde  u^n_t(x,\omega) = \widetilde  u^n_t(x, \pi^n(w)),
\end{equation}
and
\begin{equation}\label{Martingalen}
	M^n_t(\omega)= M^n_t(\pi^n(w)).
\end{equation}
The filtration $\FF^n$ on $\Omega^0$ can be seen as a filtration on $\Omega^*$ generated by $(m,w) \mapsto \pi^n(w)$. With this notation, $(\widetilde  u^n, M^n)$, defined on the filtered probability space $(\Omega^*, \FF^n, P^n)$, solves, in the sense of Definition \ref{D:solution},
\begin{equation}\label{E:tildeun}
	\left\{
	\begin{split}
	&d\widetilde  u^n_{t} = \bigl[ -\tr(\widetilde  a^n_t(x,\omega) D^2 \widetilde  u^n_t)+ \widetilde  H^n_t(x,D\widetilde  u^n_{t},\omega)  \bigr] dt + d M^n_{t}
	\quad  \text{in }  (0,T) \times \R^d,\\
 	&\widetilde  u^n_T= \widetilde  G^n(\cdot,\omega) \quad \text{in }  \R^d.
	\end{split}
	\right.
\end{equation}
The weak solution of \eqref{e.MFGstochBisbody} is obtained by taking a weak limit, as $n \to \oo$, of $P^n$.

\begin{lemma}\label{L:Pnlimit}
	Up to a subsequence, as $n \to \oo$, $P^n$ converges to some $P \in \mcl P(\Omega^*)$ with respect to the distance ${\bf d}_{\mcl P(\Omega^*)}$. Moreover, if $\FF^P$ denotes the natural right-continuous augmentation of the filtration $\FF$ by $P$-null sets, then the second component $w$ of the canonical process $(m,w) \in\Omega^*$ is a $\FF^P$-Brownian motion under $P$.
\end{lemma}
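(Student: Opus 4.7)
The plan is to argue in three steps: tightness of $(P^n)_{n \in \NN}$ and Prokhorov's theorem give a weakly convergent subsequence $P^n \to P$; a uniform integrability estimate then upgrades this to convergence in the Wasserstein-$1$ distance ${\bf d}_{\mcl P(\Omega^*)}$; and finally L\'{e}vy's characterization identifies $w$ as an $\FF^P$-Brownian motion under $P$.

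For tightness, observe that every $\omega = (m,w) \in \Omega^*$ satisfies the uniform bounds in \eqref{measurepathspace}. The uniform $1/2$-H\"{o}lder estimate in ${\bf d}_2$ yields equicontinuity in time of the $m$-component, and the uniform $p$-moment bound (with $p > 2$) gives pointwise precompactness in $\mcl P_2(\RR^d)$ via uniform integrability of $|x|^2$. Arzel\`{a}-Ascoli then places the $m$-marginals of $(P^n)_n$ in a fixed compact subset of $C([0,T], \mcl P_2(\RR^d))$. The $w$-marginals all equal Wiener measure and are therefore trivially tight. Prokhorov's theorem now supplies the weakly convergent subsequence.

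To upgrade weak convergence to convergence in ${\bf d}_{\mcl P(\Omega^*)}$, it suffices to check uniform integrability of ${\bf d}_{\Omega^*}(\omega, \omega_0)$ under $(P^n)_n$ for some reference $\omega_0 \in \Omega^*$. H\"{o}lder's inequality applied to the $p$-moment bound yields a deterministic upper bound for $\sup_{t \in [0,T]} {\bf d}_2(m_t,\delta_0)$, and Doob's inequality applied to the Brownian motion $w$ provides finite moments of all orders for $\sup_{t \in [0,T]} |w_t|$ uniformly in $n$. Together these give $\sup_n \EE_{P^n}[{\bf d}_{\Omega^*}(\omega,\omega_0)^2] < \infty$, which implies the required uniform integrability and hence convergence $P^n \to P$ in ${\bf d}_{\mcl P(\Omega^*)}$.

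For the Brownian motion property, recall that under every $P^n$ the measure $m$ is $\FF^n$-adapted, with $\FF^n$ coarser than the filtration generated by $w$; hence $w$ is an $\FF$-Brownian motion under $P^n$. For any $0 \le s \le t \le T$, every bounded continuous $\phi : \Omega^* \to \RR$ depending only on the restriction of $(m,w)$ to $[0,s]$, and all indices $i,j$,
\[
	\EE_{P^n}[\phi \cdot (w_t - w_s)^i] = 0 \quad \text{and} \quad \EE_{P^n}\bigl[\phi \cdot ((w_t - w_s)^i (w_t - w_s)^j - \delta_{ij}(t-s))\bigr] = 0.
\]
Both integrands are continuous on $\Omega^*$ and, by the previous step, uniformly integrable under $(P^n)_n$, so they pass to the limit under $P$. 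A monotone-class argument extends these identities to all bounded $\FF_s$-measurable test functions, so $w$ is a continuous $\FF$-martingale under $P$ with cross-variations $\delta_{ij} t$; L\'{e}vy's characterization then identifies $w$ as an $\FF$-Brownian motion under $P$, and the standard stability of Brownian motions under right-continuous augmentation and null-set completion yields the statement for $\FF^P$. The main subtlety is in this final step: although under each $P^n$ the filtration $\FF$ coincides with that generated by $w$ alone, this may fail under the weak limit $P$, so one cannot obtain the Brownian motion property by a direct pushforward, and the martingale test equations must genuinely be carried through the limit.
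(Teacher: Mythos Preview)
Your proof is correct and follows the same broad outline as the paper: tightness (via Arzel\`a--Ascoli on the $m$-component and Wiener measure on the $w$-component), upgrade from weak to ${\bf d}_{\mcl P(\Omega^*)}$-convergence via uniform integrability, and finally verification that $w$ is an $\FF^P$-Brownian motion under the limit $P$.

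The one genuine methodological difference lies in this last step. The paper verifies directly that the increment $w_t - w_s$ is independent of $\FF_s$ under $P$: it fixes finitely many times $s_1 < \cdots < s_N \le s$ and bounded test functions $\phi$, $\psi$, writes
\[
	\int_{\Omega^*} \phi(w_t - w_s)\,\psi\bigl((m,w)_{s_1},\ldots,(m,w)_{s_N}\bigr)\,dP^n
\]
as an integral over $\Omega^0$, uses that $\widetilde m^n_{s_i}$ is measurable with respect to $\mbb F^0_{k2^{-n}}$ for some $k 2^{-n} \le s$ (hence independent of $w_t - w_s$) to factor the integral, and then passes to the limit. You instead pass the martingale and quadratic-variation test equations through the limit and invoke L\'evy's characterization. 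Both arguments rely on the same underlying fact --- that under each $P^n$ the canonical filtration $\FF$ collapses ($P^n$-a.s.) to a sub-filtration of that generated by $w$ alone --- and both require care in passing unbounded integrands to the limit. Your route is the more standard one in the martingale-problem literature; the paper's is slightly more direct in that it bypasses L\'evy entirely. Your closing remark about why a direct pushforward argument fails under $P$ is exactly the point: it is the reason the paper, too, must genuinely carry the independence through the limit rather than inherit it.
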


\begin{proof}
	The first marginal of $P^n$ is supported on a compact set of $C([0,T], \mcl P_2(\RR^d))$ that is independent of $n$, and the second marginal is the Wiener measure on $C([0,T], \RR^d)$. It follows that $(P^n)_{n \in \NN}$ is tight, and so the convergence along a subsequence is then a standard consequence of the fact that weak convergence implies convergence in the ${\mbf d}_{\mcl P(\Omega^*)}$ distance.
	
	For all $n$, the random variable $(m,w) \mapsto (w_t)_{t \in [0,T]}$ is a Brownian motion under $P^n$, and therefore the same is true for $P$. To finish the proof, it suffices to show that, for all $0 \le s < t \le T$, $w_t - w_s$ is independent of $\mbb F^P_s$.
	
	 Fix $N \in \NN$, Borel functions $\phi: \RR^d \to \RR$ and $\psi: (\RR^{1+d})^N \to \RR$, and $0 \le s_1 < s_2 < \cdots < s_N \le s < t$. For $n \in \NN$, we have
\begin{equation}\label{independence}
	\begin{split}
	&\int_{\Omega^*} \phi(w_t - w_s) \psi((m,w)_{s_1}, (m,w)_{s_2}, \ldots, (m,w)_{s_N}) dP^n(m,w)\\
	&= 
	\int_{\Omega^0} \phi(w_t - w_s) \psi\left( \widetilde  m^n_{s_1}(\cdot,w), w_{s_1}, \widetilde  m^n_{s_2}(\cdot,w), w_{s_2}, \ldots, \widetilde  m^n_{s_N}(\cdot,w), w_{s_N} \right) d\PP^0(w).
	\end{split}
\end{equation}
For $i = 1,2,\ldots,N$, $\widetilde  m^n_{s_i}$ is $\mbb F^n_{s_i}$-measurable, and therefore $\mbb F^0_{k2^{-n}}$-measurable, where $k$ is the largest integer smaller than $2^n s_i$. It follows that $\widetilde  m^n_{s_i}$ is independent of $w_t - w_s$, and so
\begin{align*}
	&\int_{\Omega^0} \phi(w_t - w_s) \psi\left( \widetilde  m^n_{s_1}(\cdot,w), w_{s_1}, \widetilde  m^n_{s_2}(\cdot,w), w_{s_2}, \ldots, \widetilde  m^n_{s_N}(\cdot,w), w_{s_N} \right) d\PP^0(w)\\
	&= 
	\int_{\Omega^0} \phi(w_t - w_s) d\PP^0(w)\\
	&\quad \cdot \int_{\Omega^0} \psi\left( \widetilde  m^n_{s_1}(\cdot,w'), w'_{s_1}, \widetilde  m^n_{s_2}(\cdot,w'), w'_{s_2}, \ldots, \widetilde  m^n_{s_N}(\cdot,w'), w'_{s_N} \right) d\PP^0(w')\\
	&= \int_{\Omega^*} \phi(w_t - w_s) dP^n(m,w) \int_{\Omega^*} \psi\left( (m', w')_{s_1}, (m',w')_{s_2},\ldots,(m',w')_{s_N} \right) dP^n(m',w').
\end{align*}
Taking the limit along an appropriate subsequence as $n \to \oo$, \eqref{independence} becomes
\begin{align*}
	\int_{\Omega^*} &\phi(w_t - w_{s}) \psi((m,w)_{s_1}, (m,w)_{s_2}, \ldots, (m,w)_{s_N}) dP(m,w)\\
	&= \int_{\Omega^*} \phi(w_t - w_{s}) dP(m,w) \int_{\Omega^*} \psi\left( (m', w')_{s_1}, (m',w')_{s_2},\ldots,(m',w')_{s_N} \right) dP(m',w').
\end{align*}
Because $N$, $(s_1,s_2,\ldots, s_N)$, $\phi$, and $\psi$ were arbitrary, this implies that, under $P$, $w_t - w_s$ is independent of $\FF_s$, and therefore the same is true for $\FF^P_s$.
\end{proof}

The rest of this subsection is devoted to proving that the tuple $(\Omega^*, \FF^P,P,w,\widetilde  u,M,m)$ is a weak solution of \eqref{e.MFGstochBisbody}, where $w$ and $m$ stand for the processes $(m,w) \mapsto w$ and $(m,w) \mapsto m$. Accordingly, we now set, for $(t,x,p) \in [0,T] \times \RR^d \times \RR^d$ and $\omega = (m,w) \in \Omega^*$,
\begin{equation}\label{tildealimit}
	\widetilde  a_t(x,\omega)=  a_t(x+\sqrt{2\beta} w_t,(\Id+\sqrt{2\beta}w_t)_\sharp m_t), 
\end{equation}
\begin{equation}\label{tildeHlimit} 
	\widetilde  H_t(x,p,\omega)=  H_t( x+\sqrt{2\beta} w_t,p,(\Id+\sqrt{2\beta}w_t)_\sharp m_t),  
\end{equation}
\begin{equation}\label{tildeGlimit}
	\widetilde  G(x,\omega)= G(x+\sqrt{2\beta} w_T, (\Id+\sqrt{2\beta}w_T)_\sharp m_T).
\end{equation}
Observe that, in view of \eqref{A:stochMFG}, for all $L > 0$, there exists $C = C_L > 0$ such that, for all $n \in \NN$, $\omega,\omega'\in \Omega^*$, and $t \in [0,T]$,
\begin{equation}\label{dataclose}
\begin{split}
	&\norm{\widetilde  a^n_t(\cdot,\omega) - \widetilde  a_t(\cdot,\omega')}_{C^{0,1}(\RR^d)}
	+ \norm{\widetilde  H^n_t(\cdot,\cdot,\omega) - \widetilde  H_t(\cdot,\cdot,\omega')}_{\oo,\RR^d \times B_L}\\
	&+  \norm{D_p \widetilde  H^n_t(\cdot,\cdot,\omega) - D_p \widetilde  H_t(\cdot,\cdot,\omega')}_{\oo,\RR^d \times B_L}
	+ \norm{\widetilde  G^n(\cdot,\omega) - \widetilde  G(\cdot,\omega') }_{\oo,\RR^d}\\
	&\le C\left(  {\bf d}_{\Omega^*}(\omega,\omega') + \norm{\pi^n(w) - w}_{\oo,[0,T]} \right).
\end{split}
\end{equation}
Now, for the probability space $(\Omega^*, \mbb F^P, P)$, we consider the solution $(\widetilde  u, M)$, in the sense of  Definition \ref{D:solution}, of the backward stochastic Hamilton-Jacobi-Bellman equation
\begin{equation}\label{E:u^P}
\left\{
	\begin{split}
	&d\widetilde  u_{t} = \bigl[ -\tr(\widetilde  a_t(x,\omega) D^2 \widetilde  u_t)+ \widetilde  H_t(x,D\widetilde  u_{t},\omega)  \bigr] dt + dM_{t} \quad \text{in }  (0,T) \times \R^d,\\
	&\widetilde  u_T= \widetilde  G(\cdot,\omega) \quad \text{in }  \R^d.
	\end{split}
	\right.
\end{equation}

\begin{lemma}\label{lem.stabilo} 
For $n \in \NN$, let $\gamma^n$ be an optimal coupling between  $P^n$  and $P$. Then there exists a subsequence $(n_k)_{k \in \NN}$ such that, for all $R > 0$,
\begin{equation}\label{doubleulimit}
	\lim_{k\to+\infty} \sup_{t\in [0,T]} \int_{\Omega^* \times \Omega^*} \left[\sup_{x\in B_R} |\hat u^{n_k}_t(x,\omega)-\widetilde  u_t(x,\omega')|^{d+2} \right] d\gamma^{n_k}(\omega,\omega') =0
\end{equation}
and
\begin{equation}\label{doubleDulimit}
	\lim_{k \to \oo} \int_{\Omega^* \times \Omega^*} \int_0^T \int_{B_R} |D\widetilde  u_t^{n_k}(x,\omega) - D \widetilde  u_t(x, \omega')|dx dt d\gamma^{n_k}(\omega,\omega') = 0.
\end{equation}
\end{lemma}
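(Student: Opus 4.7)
The plan is to combine the weak convergence $P^n \to P$ from Lemma \ref{L:Pnlimit} with the stability estimate for the backwards stochastic HJB equation provided by Theorem \ref{T:comparison2}. First, since $P^n \to P$ in $\mathbf{d}_{\mcl P(\Omega^*)}$ and $\gamma^n$ is optimal, the couplings $\gamma^n$ on $\Omega^* \times \Omega^*$ concentrate on the diagonal as $n \to \infty$: $\gamma^n \to (\mathrm{id},\mathrm{id})_\sharp P$. Along a subsequence (still denoted by $n$), by the Skorohod representation theorem we realize on a common probability space $(\Xi,\mcl G, Q)$ random variables $(\omega_1^{n}, \omega_2^{n}) \sim \gamma^{n}$ and $\omega^\infty \sim P$ such that $(\omega_1^{n}, \omega_2^{n}) \to (\omega^\infty, \omega^\infty)$ in $\Omega^* \times \Omega^*$, $Q$-almost surely.

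By Lemma \ref{L:umMm}, the family $\{\widetilde u^{n}(\cdot, \omega_1^{n})\}_{n}$ is uniformly bounded, Lipschitz, and semiconcave in the space variable with constants independent of $n$. Using the integrated form of \eqref{E:tildeun} together with the uniform bounds on the coefficients and the martingales $M^n$, one obtains a uniform modulus of continuity in time in a sufficiently weak topology. By Arzel\`{a}--Ascoli, after extracting a further subsequence, there exists $U:[0,T]\times\RR^d \times \Xi \to \RR$ with $\widetilde u^{n}(\cdot, \omega_1^{n}) \to U$ locally uniformly in $[0,T]\times\RR^d$, $Q$-almost surely; by Lemma \ref{L:SCprops}\eqref{L:Duconverge}, the spatial gradients converge Lebesgue-a.e.\ as well.

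The key step is to identify $U$ with $\widetilde u(\cdot, \omega^\infty)$. Using \eqref{dataclose} and $\omega_1^n \to \omega^\infty$, together with the Lipschitz dependence of the data on the $\mcl P_2$-variable from \eqref{A:stochMFG}, the coefficients $(\widetilde a^{n}, \widetilde H^{n}, \widetilde G^{n})(\cdot, \omega_1^{n})$ converge $Q$-a.s.\ to $(\widetilde a, \widetilde H, \widetilde G)(\cdot, \omega^\infty)$ in the norms appearing in the right-hand side of Theorem \ref{T:comparison2}. Passing to the limit in the integrated form of \eqref{E:tildeun} and invoking the a.e.\ gradient convergence, we deduce that the pair $(U, M^U)$, where $M^U$ is the distributional limit of $M^{n}$, satisfies the backwards stochastic HJB \eqref{E:u^P} driven by $(\widetilde a, \widetilde H, \widetilde G)(\cdot, \omega^\infty)$ on an appropriate filtered extension of $(\Xi, Q)$. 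Since $\widetilde u(\cdot, \omega^\infty)$, together with the pushforward of $M$, also solves this equation on the same space, the uniqueness in Theorem \ref{T:comparison} forces $U = \widetilde u(\cdot, \omega^\infty)$ $Q$-almost surely.

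Once this identification is established, \eqref{doubleulimit} follows by combining the $Q$-a.s.\ local uniform convergence with the uniform $L^\infty$ bound on $\widetilde u^n$ and $\widetilde u$ via the dominated convergence theorem (the power $d+2$ is absorbed by the uniform bound). For \eqref{doubleDulimit}, the a.e.\ convergence of gradients from Lemma \ref{L:SCprops}\eqref{L:Duconverge}, combined with the uniform Lipschitz bound, yields the $L^1_\loc$ limit by dominated convergence. The main obstacle is the identification step: one must construct the filtration on the Skorohod space $(\Xi, \mcl G, Q)$ carefully so that $M^U$ is a martingale with respect to it, and so that $(\widetilde u(\cdot,\omega^\infty), M(\omega^\infty))$ simultaneously satisfies Definition \ref{D:solution} there. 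This requires verifying the martingale property of the limit through the weak convergence, leveraging that the Brownian motion component of $\omega^\infty$ retains its martingale structure via Lemma \ref{L:Pnlimit}.
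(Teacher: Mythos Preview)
Your approach takes a genuinely different route from the paper, and the identification step you flag as ``the main obstacle'' is indeed a real gap that you do not close. The difficulty is that on the Skorohod space $(\Xi,\mcl G,Q)$, any filtration large enough to make the limit $(U,M^U)$ a solution in the sense of Definition \ref{D:solution} must contain the information from all the $(\omega_1^n,\omega_2^n)$; but then $M(\omega^\infty)$ need not remain a martingale with respect to this enlarged filtration, so you cannot invoke Theorem \ref{T:comparison} to force $U=\widetilde u(\cdot,\omega^\infty)$. You recognize this, but the sketch does not say how to overcome it. There is also a secondary issue: the Arzel\`a--Ascoli step requires a uniform time modulus of continuity for $\widetilde u^n$, yet Definition \ref{D:solution} only gives c\`adl\`ag regularity, and you do not establish this.

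The paper avoids both problems by never passing to a limit in the equation. Instead, for each fixed $n$, it works on the product probability space $(\Omega^*\times\Omega^*,\,\mbb F^n\otimes\mbb F,\,\gamma^n)$, where $(\omega,\omega')\mapsto(\widetilde u^n(\cdot,\omega),M^n(\cdot,\omega))$ and $(\omega,\omega')\mapsto(\widetilde u(\cdot,\omega'),M(\cdot,\omega'))$ are \emph{already} solutions of their respective equations in the sense of Definition \ref{D:solution}. Theorem \ref{T:comparison2} then applies directly and gives a quantitative bound for $\sup_t\int\|\widetilde u^n_t(\cdot,\omega)-\widetilde u_t(\cdot,\omega')\|_{\infty,B_R}^{d+2}\,d\gamma^n$ in terms of the coupling cost and $\|\pi^n(w)-w\|_\infty$, using \eqref{dataclose}; this yields \eqref{doubleulimit} immediately. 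For \eqref{doubleDulimit}, the paper embeds all the $\gamma^n$ simultaneously via the Kolmogorov extension theorem on $(\Omega^*)^{\NN}$, extracts a subsequence with almost-sure locally uniform convergence, and then applies Lemma \ref{L:SCprops}\eqref{L:Duconverge} and dominated convergence. The point is that no identification or compactness of the $\widetilde u^n$ is needed: the stability estimate does all the work.
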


\begin{proof} 
For each $n \in \NN$, the filtered probability space $(\Omega^* \times \Omega^*, \mbb F^n \otimes \mbb F, \gamma^n)$ and the processes
\[
	\Omega^* \times \Omega^* \ni (\omega,\omega') \mapsto \left( \widetilde  a^n_\cdot(\cdot,\omega), \widetilde  H^n_\cdot(\cdot,\cdot,\omega), \widetilde  G^n(\cdot,\omega), \widetilde  a_\cdot(\cdot,\omega'), \widetilde  H_\cdot(\cdot,\cdot,\omega'), \widetilde  G(\cdot,\omega') \right)
\]
satisfy \eqref{A:filtration} and \eqref{A:stochHJB}, and
\[
	(\omega,\omega') \mapsto (\widetilde  u^n(\cdot,\omega),  M^n(\cdot,\omega) )
	\quad \text{and} \quad
	(\omega, \omega') \mapsto ( \widetilde  u(\cdot,\omega'), M(\cdot,\omega'))
\]
solve respectively \eqref{E:tildeun} and \eqref{E:u^P} in the sense of Definition \ref{D:solution}. Theorem \ref{T:comparison2} then gives, for any $R > 0$ and some constants $C,L > 0$ depending on $R$, $d$, and the constants the assumptions,
\begin{align*}
		&\sup_{t \in [0,T]} \int_{\Omega^* \times \Omega^*} \norm{\widetilde  u^n_t(\cdot,\omega) - \widetilde  u_t(\cdot,\omega')}_{\oo, B_R}^{d+2} d\gamma^n(\omega,\omega')\\
	&\le C\int_{\Omega^* \times \Omega^*} \Bigg( \sup_{t \in [0,T]}\norm{\widetilde  a^n_t(\cdot,\omega) - \widetilde  a_t(\cdot,\omega')}_{C^{0,1}} + \norm{\widetilde  H^n(\cdot,\cdot,\omega) - \widetilde  H(\cdot,\cdot,\omega')}_{\oo, [0,T] \times \RR^d \times B_L} \\
	& \qquad + \norm{\widetilde  G^n(\cdot,\omega) - \widetilde  G(\cdot,\omega')}_\oo \Bigg) d\gamma^n(\omega,\omega').
\end{align*}
The convergence statement \eqref{doubleulimit} now follows from \eqref{piecewiseWn}, \eqref{dataclose}, and Lemma \ref{L:Pnlimit}.

We now define ${\mbf \Omega} := (\Omega^*)^{\NN}$, and write $\bar \omega = (\omega^0,\omega^1,\omega^2,\ldots) \in {\mbf \Omega}$. We define the cylindrical filtration $\mbf F$ on $\mbf \Omega$ by
\[
	\mbf F_t := \sigma\left( (\omega^0_s)_{s \in [0,t]}, (\omega^1_s)_{s \in [0,t]}, \ldots, (\omega^n_s)_{s \in [0,t]}, \, n = 1,2,\ldots \right).
\]
By the Kolmogorov extension theorem, there exists a unique probability measure ${\mbf P}$ on ${\mbf \Omega}$ such that, for any $n \in \NN$ and Borel measurable $\Phi: (\Omega^*)^{1+n} \to \RR$,
\begin{align*}
	&\int_{{\mbf \Omega}} \Phi(\omega^0, \omega^1, \ldots, \omega^n) d{\mbf P}( \bar\omega)\\
	&:= \int_{(\Omega^*)^{1+n}} \Phi(\omega^0,\omega^1,\ldots, \omega^n) d\gamma^1(\omega^1,\omega^0) d\gamma^2(\omega^2,\omega^0) \cdots d\gamma^n(\omega^n,\omega^0).
\end{align*}
For $(t,x,\bar\omega) \in [0,T] \times \RR^d \times {\mbf \Omega}$ and $n \in \NN$, define
\[
	\widetilde  U^n_t(x,\bar \omega) := \widetilde  u^n_t(x,\omega^n) \quad \text{and} \quad \widetilde  U_t(x,\bar \omega) := \widetilde  u_t(x,\omega^0).
\]
Then \eqref{doubleulimit} can be rewritten as
\[
	\lim_{k \to \oo} \sup_{t \in[0,T]} \int_{{\mbf \Omega}} \sup_{x \in B_R} |\widetilde  U^{n_k}_t(x,\bar \omega) - \widetilde  U_t(x,\bar \omega) |^{d+2} d {\mbf P}(\bar \omega) = 0,
\]
which implies that, for some further subsequence, labeled again as $(n_k)_{k \in \NN}$, for all $t \in [0,T]$, $R > 0$, and ${\mbf P}$-a.s. in $\bar \omega$,
\[
	\lim_{k \to \oo} \sup_{x \in B_R} |\widetilde  U^{n_k}_t(x,\bar \omega) - \widetilde  U_t(x,\bar \omega)| = 0.
\]
Uniformly over $t \in [0,T]$, ${\mbf P}$-a.e. $\bar \omega$, and $n \in \NN$, $\widetilde  U^n_t(\cdot,\bar \omega)$ and $\widetilde  U_t(\cdot,\bar \omega)$ are globally Lipschitz continuous and semiconcave, and so Lemma \ref{L:SCprops}\eqref{L:Duconverge} and the dominated convergence theorem imply that, for all $R > 0$,
\[
	\lim_{k \to \oo} \int_{{\mbf \Omega}} \int_0^T \int_{B_R} |D\widetilde  U_t^{n_k}(x,\bar\omega) - D \widetilde  U_t(x, \bar\omega)|dx dt d{\mbf P}(\bar\omega) = 0,
\]
which is equivalent to \eqref{doubleDulimit}.

\end{proof}

%A consequence of the above stability result is the following: 
%\begin{corollary}\label{cor.stabDu} For any $R>0$ and $\delta>0$, 
%$$
%\lim_{n\to+\infty} dt\otimes \pi^n \left[ (t, \omega) \in [0,T]\times \Omega, \; \int_{B_R} |Du^n_t(x)-Du^P_t(x)|dx> \delta\right] =0.
%$$
%\end{corollary}
%
%\begin{proof} The result is a straightforward consequence Lemma \ref{lem.stabilo} and  the remark that, for any $\delta,M>0$ there exists $\eta>0$ such that if $u$ and $v$ are two maps on $B_R$ such that, for any $|z|\leq 1$,  
%$$
%\|u\|_{W^{1,\infty}}+\|v\|_{W^{1,\infty}} \leq M, \qquad  D^2uz\cdot z\leq M, \qquad D^2vz\cdot z\leq M, 
%$$
%then  $\|u-v\|_\infty\leq  \eta$ implies that $\|Du-Dv\|_1\leq  \delta$. 
%\end{proof}

\begin{proof}[Proof of Theorem \ref{T.ExistenceMFG}]
As stated above, we establish the claim of the theorem by proving that $(\Omega^*, \FF^P,P,w,\widetilde  u,M,m)$ is a weak solution of \eqref{e.MFGstochBisbody}. Properties \eqref{def.Pspace}, \eqref{def.hjb}, and \eqref{def.boundary} are true by construction, and \eqref{def.mw} is a consequence of Lemma \ref{L:Pnlimit}. It remains to show that the process $m$ solves, $P$-a.s., the Fokker-Planck equation associated to $\widetilde  u$. To do this, it suffices to show that, for all $\phi \in C^1_c((0,T) \times \RR^d)$ and $A \in \mbb F^P_T$,
\begin{align*}
	I_A:= \int_A \int_0^T \int_{\RR^d} m_t(x) &\bigg[ \del_t \phi_t(x) + \tr \widetilde  a_t(x,\omega) D^2 \phi_t(x) \\
	&- D\phi_t(x) \cdot D_p \widetilde  H_t(x, D\widetilde  u_t(x,\omega),\omega) \bigg] dxdt dP(\omega) = 0.
\end{align*}
The canonical process $(m,w) \mapsto m$ satisfies $P^n$-a.s. the equation
\begin{equation}\label{mWRTPn}
	\left\{
	\begin{split}
	&d m_{t} =  \div \bigl[ \div (\widetilde   a^n_{t}(x,\omega) m_{t})+  m_{t} D_{p} \widetilde   H^n_t(x,D\widetilde  u^{n}_{t},\omega) 
 \bigr] dt \ \ \text{in} \ \ (0,T)\times  \R^d, \\
 	&m_0=\bar m_0 \ \ \text{in} \ \   \R^d.
	\end{split}
	\right.
\end{equation}
Let $A' \in \mbb F_T$ and a $P$-null set $N \subset \Omega$ be such that $A = A' \cup N$. We thus have
\begin{align*}
	I^n_{A'} := \int_{A'} \int_0^T \int_{\RR^d} m_t(x) &\bigg[ \del_t \phi_t(x) + \tr \widetilde  a^n_t(x,\omega) D^2 \phi_t(x)\\
	& - D\phi_t(x) \cdot D_p \widetilde  H^n_t(x, D\widetilde  u^n_t(x,\omega),\omega) \bigg] dxdt dP^n(\omega) = 0.
\end{align*}
Let $R > 0$ be such that $\supp \phi \subset [0,T] \times B_R$. By \eqref{dataclose}, there exists a constant $C > 0$ such that, for all $n \in \NN$,
\begin{align*}
	|I_{A'}| = |I_{A'} - I^n_{A'}| 
	&\le C \norm{\phi}_{C^1_t C^2_x} \Bigg( d_{\mcl P(\Omega^*)}(P^n,P) + \EE^0 \norm{\pi_n(w) - w}_{\oo,[0,T]} \\
	&\quad +\int_{\Omega^* \times \Omega^*} \int_0^T \int_{B_R} |D\widetilde  u^n_t(x,\omega) - D \widetilde  u_t(x,\omega') | d\gamma^n(\omega,\omega') \Bigg),
\end{align*}
where, as in Lemma \ref{lem.stabilo}, $\gamma^n$ is an optimal coupling between $P^n$ and $P$. Sending $n \to \oo$ along the appropriate subsequence, we conclude from \eqref{piecewiseWn}, Lemma \ref{L:Pnlimit}, and Lemma \ref{lem.stabilo} that $I_{A'} = 0$, and therefore $I_{A} = I_{A' \cup N} = 0$, because $P(N) = 0$.

\end{proof}

\subsection{Uniqueness}
A weak solution in the sense of Definition \ref{def.weaksolMGF} can be identified with the law of $(\widetilde  m, W)$ on the probability space $\Omega^*$ defined in \eqref{measurepathspace}. Indeed, $(\widetilde  u,M)$ can then be recovered by solving the backward stochastic HJB equation with the theory from Section \ref{sec.HJ}. This motivates the following definition.

\begin{definition}\label{def:uniqueness}
	Uniqueness in law holds for \eqref{e.MFGstochBisbody} if, for any two weak solutions $(\Omega_i,\mbb F_i, \mbb P_i, W_i, \widetilde  u_i, M_i, \widetilde  m_i)_{i=1,2}$, we have
	\[
		(\widetilde  m_1,W_1)_\sharp \mbb P_1 = (\widetilde  m_2,W_2)_\sharp \mbb P_2 \quad \text{in } \mcl P(\Omega^*).
	\]
	Pathwise uniqueness holds if, for two solutions $(\widetilde  u_i,M_i,\widetilde  m_i)_{i=1,2}$ defined on the same probability space with a given Wiener process $W$, we have $\widetilde  u_1 = \widetilde  u_2$, $M_1 = M_2$, and $\widetilde  m_1 = \widetilde  m_2$ with probability one.
\end{definition}

Under further structural assumptions, namely, the well-known Lasry-Lions monotonicity conditions \cite{LLJapan}, we now establish pathwise uniqueness for the system \eqref{e.MFGstochBisbody}.

In addition to \eqref{A:stochMFG} and \eqref{A:barm0MFG}, we assume that (abusing notation)
\begin{equation}\label{A:diffusionind}
	a_t(x,m) = a_t(x) \text{ is independent of }m \in \mcl P_2(\RR^d),
\end{equation}
\begin{equation}\label{A:separatedH}
	H_t(x,p,m) = H_t(x,p) - F_t(x,m) \quad \text{for } (t,x,p,m) \in [0,T] \times \RR^d \times \RR^d \times \mcl P_2(\RR^d),
\end{equation}
and the coupling functions $F$ and $G$ are strictly monotone, that is,
\begin{equation}\label{A:FGmonotone}
	\left\{
	\begin{split}
	&\text{for all } m,m' \in \mcl P_2(\RR^d) \text{ and } t \in [0,T],\\
	&\int_{\R^d}( F_t(x,m) - F_t(x,m'))(m-m')(dx)\geq 0 \text{ and}\\
	&\int_{\R^d}( G(x,m)- G(x,m'))(m-m')(dx)\geq0, \\
	&\text{with equality in either holding if and only if } m=m'.
	\end{split}
	\right.
\end{equation} 
Writing, for $(t,x,m) \in [0,T] \times \RR^d \times \mcl P_2(\RR^d)$,
\begin{equation}\label{tildeF}
	\widetilde  F_t(x,m) := F_t(x + \sqrt{2\beta} W_t, (\Id + \sqrt{2\beta} W_t)_\sharp m),
\end{equation}
the MFG system \eqref{e.MFGstochBisbody} then becomes 
\begin{equation}\label{e.MFGstochBisMono}
	\left\{
	\begin{split}
 	&d \widetilde  u_{t} = \bigl[ -\tr(\widetilde  a_t(x) D^2 \widetilde  u_t)+ \widetilde  H_t(x, D\widetilde  u_{t})- \widetilde  F_t(x, \widetilde  m_{t})  \bigr] dt + dM_{t} \quad \text{in } (0,T) \times \R^d, \\
	&d \widetilde  m_{t} = \div\bigl[ \div(\widetilde  a_{t}(x) \widetilde  m_{t})+ \widetilde  m_{t} D_{p} \widetilde  H_t(x, D\widetilde  u_{t})
\bigr] dt \quad\text{in } (0,T) \times \R^d,\\
	&\widetilde  u_T(x) = \widetilde  G(x,\widetilde  m_T), \ \ \   \widetilde  m_{0}= \bar m_0 \ \ \text{in} \ \  \R^d.
	\end{split}
	\right.
\end{equation}

\begin{theorem}\label{T:MFGuniqueness}
Assume \eqref{A:stochMFG}, \eqref{A:barm0MFG}, and \eqref{A:diffusionind} - \eqref{A:FGmonotone}.
Then pathwise uniqueness holds for \eqref{e.MFGstochBisMono} in the sense of Definition \ref{def:uniqueness}, and, moreover, every weak solution is a strong solution.
\end{theorem}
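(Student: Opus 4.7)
The plan is to adapt the classical Lasry--Lions duality argument \cite{LLJapan} to the low-regularity setting of Definitions \ref{D:solution} and \ref{def.weaksolMGF}, and then invoke a Yamada--Watanabe-type argument to upgrade weak existence plus pathwise uniqueness to strong existence. Suppose that $(\widetilde  u^i, M^i, \widetilde  m^i)$, $i = 1,2$, are two solutions of \eqref{e.MFGstochBisMono} defined on a common filtered probability space with a common Brownian motion $W$, and set $v := \widetilde  u^1 - \widetilde  u^2$, $\mu := \widetilde  m^1 - \widetilde  m^2$, and $N := M^1 - M^2$. Note $\mu_0 \equiv 0$ and $v_T(x) = \widetilde  G(x, \widetilde  m^1_T) - \widetilde  G(x, \widetilde  m^2_T)$, so by the monotonicity assumption on $G$ in \eqref{A:FGmonotone},
\[
	\E \int_{\R^d} v_T(x) \mu_T(dx) \ge 0.
\]

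The core of the argument is to compute $\E\int_{\R^d} v_t \mu_t$ by integration in time. Formally, subtracting the two copies of \eqref{e.MFGstochBisMono} and performing an Itô-type product computation (whose stochastic part vanishes in expectation since $\mu$ has no martingale component and $N$ is a martingale), together with integration by parts using the symmetry of $\widetilde  a$, would yield
\begin{align*}
	\E\int_{\R^d} v_T \mu_T  = -\E \int_0^T \!\! \int_{\R^d} &\Big\{ [\widetilde  F_t(\cdot,\widetilde  m^1_t) - \widetilde  F_t(\cdot,\widetilde  m^2_t)]\, d\mu_t \\
	& + \widetilde  m^1_t \big[\widetilde  H_t(\cdot, D\widetilde  u^2_t) - \widetilde  H_t(\cdot, D\widetilde  u^1_t) + D_p \widetilde  H_t(\cdot, D\widetilde  u^1_t) \cdot Dv_t\big]  \\
	& + \widetilde  m^2_t \big[\widetilde  H_t(\cdot, D\widetilde  u^1_t) - \widetilde  H_t(\cdot, D\widetilde  u^2_t) - D_p \widetilde  H_t(\cdot, D\widetilde  u^2_t) \cdot Dv_t\big] \Big\} dx\, dt.
\end{align*}
By uniform convexity of $\widetilde  H_t(x,\cdot)$, each of the last two bracketed expressions is bounded below by $c|Dv_t|^2$ for some $c > 0$, while the monotonicity of $F$ makes the first term nonnegative. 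Combined with the terminal nonnegativity above, this forces both $(\widetilde  F^1_t - \widetilde  F^2_t)\,d\mu_t \equiv 0$ and $|Dv_t|^2(\widetilde  m^1_t + \widetilde  m^2_t) \equiv 0$. Strict monotonicity of $F$ then gives $\widetilde  m^1 = \widetilde  m^2$, and then $\widetilde  u^1$ and $\widetilde  u^2$ solve the same backward stochastic HJB equation \eqref{E:hjbbody}, so $\widetilde  u^1 = \widetilde  u^2$ and $M^1 = M^2$ by Theorem \ref{T:comparison}.

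The main obstacle will be the rigorous justification of the displayed duality identity, since $\widetilde  u^i$ and $\widetilde  m^i$ are not smooth enough to serve as test functions in each other's equations: $D^2 \widetilde  u^i$ is merely a locally finite measure, and $\widetilde  m^i$ is only bounded. To handle this, I would mollify both objects in space, setting $v^\eps := v * \rho_\eps$ and $\mu^\eps := \mu * \rho_\eps$ for $\rho_\eps$ as in \eqref{mollifier}, and test the mollified HJB equation for $v^\eps$ against a truncation $\chi_R \mu^\eps$, where $\chi_R$ is a smooth cutoff with polynomial decay adapted to the $L^\infty \cap \mcl P_2$ bound on $\widetilde  m^i$. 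The resulting identity will contain commutator errors of the two types
\[
	[\widetilde  a_t D^2 \cdot] * \rho_\eps - \widetilde  a_t D^2(\cdot * \rho_\eps), \qquad [D_p \widetilde  H_t \cdot D \cdot] * \rho_\eps - D_p \widetilde  H_t \cdot D(\cdot * \rho_\eps).
\]
A DiPerna--Lions-style commutator lemma, exploiting precisely that $D^2 \widetilde  u^i \in L^\oo([0,T]; \mcl M_\loc(\R^d))$, $D\widetilde  u^i \in L^\oo$, $\widetilde  m^i \in L^\oo$, and $\widetilde  a \in C^{1,1}$, will show these commutators vanish in $L^1_{\loc}$ as $\eps \to 0$. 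The $\mcl P_2$ bound on $\widetilde  m^i$ combined with the at-most-linear growth of $v_t$ in $|x|$ will let me send $R \to \oo$ to remove the cutoff, completing the passage to the limit and yielding the desired identity.

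Having established pathwise uniqueness in the sense of Definition \ref{def:uniqueness}, the final claim that every weak solution is strong follows by a Yamada--Watanabe argument adapted to this setting, along the lines of \cite{carmona2016mean}: the law of any weak solution on the canonical path space $\Omega^*$ from \eqref{measurepathspace} is uniquely determined (by composing two solutions on a product space and applying pathwise uniqueness), and a standard measurable-selection argument produces a measurable functional of $W$ realizing the solution.
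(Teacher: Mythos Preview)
Your proposal is correct and follows essentially the same route as the paper: the Lasry--Lions duality argument, justified rigorously by mollification and spatial cutoff with commutator estimates that vanish precisely because $D^2\widetilde u^i \in L^\infty([0,T]\times\Omega;\mcl M_\loc)$, followed by the Yamada--Watanabe argument from \cite{carmona2016mean}. The only cosmetic difference is that the paper mollifies and truncates only $\widetilde m^i$ (producing a valid $C_c$ test function for the HJB equation in Definition~\ref{D:solution}, so that all commutators arise on the Fokker--Planck side), whereas you propose mollifying both $v$ and $\mu$; either scheme works and relies on the same regularity ingredients you identified.
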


The argument is an adaptation of those in \cite{LL06cr2,LLJapan}, similar to \cite{CSmfg}. Informally,
one would like to compute
\[
	d\int_{\RR^d} (\widetilde  u^1_{t}(x) - \widetilde  u^2_{t}(x)) (\widetilde  m^{1}_{t}(x) - \widetilde  m^{2}_{t}(x))dx,
\]
take expectations to cancel the martingale terms, and invoke the convexity of $H$ and the monotonicity of $F$ and $G$ to show that $\widetilde  m^1 = \widetilde  m^2$. The main subtlety, which does not arise in the first order setting of \cite{CSmfg}, is the possible discontinuity of $\widetilde  m^1$ and $\widetilde  m^2$ in the state variable, which prevents them from being used as test functions in the distributional equality in Definition \ref{D:solution}. At the same time, $\widetilde  u^1$ and $\widetilde  u^2$ fail to be $C^{1,1}$ in the state variable in general, and so cannot be used as test functions for the Fokker-Planck equation.

We overcome this issue by introducing a standard mollifier and cutoff, and the resulting commutators vanish in the limit precisely because $D^2\widetilde  u^1$ and $D^2\widetilde  u^2$ are locally finite Radon measures.

\begin{proof}[Proof of Theorem \ref{T:MFGuniqueness}]
%We only sketch the proof, as many arguments are very similar to the ones in \cite{carmona2016mean} and in \cite{CSmfg}. 

It is enough to prove the pathwise uniqueness, since then, arguing as in the classic paper of Yamada and Watanabe \cite{yamadawatanabe1971} (see also \cite[Proposition 6.1]{carmona2016mean}), invoking the existence of a weak solution by Theorem \ref{T.ExistenceMFG}, every solution is strong.

Let $(\Omega,\mbb F, \mbb P, W,\widetilde  u^i,M^i, \widetilde  m^i)_{i=1,2}$ be two weak solutions, defined on the same probability space with the same Wiener process $W$. Let $(\rho_\delta)_{\delta > 0}$ be as in \eqref{mollifier} and let $\eta \in C^\oo_c(\RR^d)$ be a standard cut-off function equal to $1$ in $B_1$ and $0$ outside of $B_2$, set $\eta_R(x) = \eta(x/R)$ for $x \in \RR^d$, $R > 0$, and define
\[
	\widetilde  m^{i,\delta} := \widetilde  m^i * \rho_\delta \quad \text{and} \quad \widetilde  m^{i,\delta,R} := \widetilde  m^i_\delta \eta_R, \quad i = 1,2.
\]
Then, for $i = 1,2$ and $\delta, R > 0$,
\begin{equation}\label{smoothcutoffm}
	\begin{split}
	\del_t \widetilde  m^{i,\delta,R}_t = \div \div \left[ \widetilde  a_t(x)\widetilde  m^{i,\delta,R}_t \right] + \eta_R(x) &\div \left[ \rho_\delta * \left( D_p \widetilde  H_t(\cdot,D\widetilde  u^i_t) \widetilde  m^i_t \right)\right]\\
	&+ \alpha_t^{i,\delta,R}(x) + \beta_t^{i,\delta,R}(x),
	\end{split}
\end{equation}
where
\[
	\alpha_t^{i,\delta,R}(x) := \eta_R(x) \div^2_x \int_{\RR^d} [\widetilde  a_t(y) - \widetilde  a_t(x)] \widetilde  m^i_t(y) \rho_\delta(x-y)dy
\]
and
\[
	\beta_t^{i,\delta,R}(x) := -  \div\left[ \widetilde  a_t(x)\widetilde  m^{i,\delta}_t\right]\cdot D\eta_R(x) - \widetilde  m_t^{i,\delta} \tr\left[ \widetilde  a_t(x) D^2\eta_R(x) \right].
\]
Set $\alpha^{\delta,R} = \alpha^{1,\delta,R} - \alpha^{2,\delta,R}$ and $\beta^{\delta,R} = \beta^{1,\delta,R} - \beta^{2,\delta,R}$. 

We then claim that
\begin{equation}\label{approxLL}
\begin{split}
	&\EE \int_{\RR^d} \left( \widetilde  G(x,\widetilde  m^1_T) - \widetilde  G(x,\widetilde  m^2_T) \right) (\widetilde  m^{1,\delta,R}_T(x) - \widetilde  m^{2,\delta,R}_T(x))dx\\
	&+ \EE \int_0^T \int_{\RR^d} \left( \widetilde  F_t(x,\widetilde  m^1_t) - \widetilde  F_t(x,\widetilde  m^2_t) \right) (\widetilde  m^{1,\delta,R}_t(x) - \widetilde  m^{2,\delta,R}_t(x)) dxdt\\
	& + \EE \int_0^T \int_{\RR^d} \rho_\delta(x) * D( \eta_R(\widetilde  u^1_t - \widetilde  u^2_t) )(x) \bigg[ \widetilde  m^1_t(x) D_p \widetilde  H_t(x,D\widetilde  u^1_t(x))\\
	&\hspace{6cm}- \widetilde  m^2_t(x) D_p \widetilde  H_t(x,D\widetilde  u^2_t(x)) \bigg] dxdt\\
	&- \EE \int_0^T \int_{\RR^d} \left( \widetilde  H_t(x,D\widetilde  u^1_t(x)) - \widetilde  H_t(x,D\widetilde  u^2_t(x)) \right) (\widetilde  m^{1,\delta,R}_t(x) - \widetilde  m^{2,\delta,R}_t(x) ) dxdt\\
	&= \EE\int_0^T \int_{\R^d} (\widetilde  u^1_t(x) - \widetilde  u^2_t(x)) (\alpha^{\delta,R}_t(x) + \beta^{\delta,R}_t(x) ) dxdt.
\end{split}
\end{equation}
We omit the full details of the argument, which is similar to \cite[Theorem 3.3]{CSmfg}. In short, for a fine partition $\{0 = t_0 < t_1 < t_2 < \cdots < t_N = T\}$ of $[0,T]$, the increments
\begin{align*}
	\int_{\RR^d} &(\widetilde  u^1_{t_{i+1}}(x) - \widetilde  u^2_{t_{i+1}}(x)) (\widetilde  m^{1,\delta,R}_{t_{i+1}}(x) - \widetilde  m^{2,\delta,R}_{t_{i+1}}(x))dx\\
	&-
	\int_{\RR^d} (\widetilde  u^1_{t_{i}}(x) - \widetilde  u^2_{t_{i}}(x)) (\widetilde  m^{1,\delta,R}_{t_{i}}(x) - \widetilde  m^{2,\delta,R}_{t_{i}}(x))dx	
\end{align*}
are decomposed, using Definition \ref{D:solution} and \eqref{smoothcutoffm}; note that $\widetilde  m^{1,\delta,R}, \widetilde  m^{2,\delta,R}$ are smooth and compactly supported in the state variable. Taking the expectation, exploiting the martingale property of $M^1$ and $M^2$, and sending the size of the partition to $0$ then yields \eqref{approxLL}.

We have $\widetilde  m^1,\widetilde  m^2 \in L^\oo(\Omega, (L^\oo \cap L^1)([0,T] \times \RR^d))$ and $D\widetilde  u^1,D\widetilde  u^2 \in L^\oo(\Omega \times [0,T] \times \RR^d)$, as well as $\norm{D\eta_R}_\oo \le C R^{-1}$ for some universal constant $C > 0$, and so, sending first $\delta \to 0$ and then $R \to \oo$, the left-hand side of \eqref{approxLL} becomes
\begin{align*}
	&\EE \int_{\RR^d} \left( \widetilde  G(x,\widetilde  m^1_T) - \widetilde  G(x,\widetilde  m^2_T) \right) (\widetilde  m^{1}_T(x) - \widetilde  m^{2}_T(x))dx\\
	&+ \EE \int_0^T \int_{\RR^d} \left( \widetilde  F_t(x,\widetilde  m^1_t) - \widetilde  F_t(x,\widetilde  m^2_t) \right) (\widetilde  m^{1}_t(x) - \widetilde  m^{2}_t(x)) dxdt\\
	& + \EE \int_0^T \int_{\RR^d} (D\widetilde  u^1_t(x) - D\widetilde  u^2_t(x)) \bigg[ \widetilde  m^1_t(x) D_p \widetilde  H_t(x,D\widetilde  u^1_t(x)) \\
	&\hspace{6cm}- \widetilde  m^2_t(x) D_p \widetilde  H_t(x,D\widetilde  u^2_t(x)) \bigg] dxdt\\
	&- \EE \int_0^T \int_{\RR^d} \left(  \widetilde  H_t(x,D\widetilde  u^1_t(x)) - \widetilde  H_t(x,D\widetilde  u^2_t(x)) \right) (\widetilde  m^{1}_t(x) - \widetilde  m^{2}_t(x) ) dxdt\\
	&\ge \EE \int_{\RR^d} \left( \widetilde  G(x,\widetilde  m^1_T) - \widetilde  G(x,\widetilde  m^2_T) \right) (\widetilde  m^{1}_T(x) - \widetilde  m^{2}_T(x))dx\\
	&+ \EE \int_0^T \int_{\RR^d} \left( \widetilde  F_t(x,\widetilde  m^1_t) - \widetilde  F_t(x,\widetilde  m^2_t) \right) (\widetilde  m^{1}_t(x) - \widetilde  m^{2}_t(x)) dxdt,
\end{align*}
where the inequality follows from the convexity of $\widetilde  H$.

Define $\mu := D^2 (\widetilde  u^1_t - \widetilde  u^2_t)$, which belongs to $L^\oo([0,T] \times \Omega, \mcl M_\loc(\RR^d) \cap W^{-1,\oo}(\RR^d))$ in view of Lemma \ref{L:SCprops}\eqref{L:TVbound} and the uniform Lipschitz and semiconcavity constants for $\widetilde  u^1$ and $\widetilde  u^2$. Then
\begin{align*}
	&\int_0^T  \int_{\RR^d} (\widetilde  u^1_t(x) - \widetilde  u^2_t(x)) \alpha^{\delta,R}_t(x) dxdt\\
	&= \int_0^T \Bigg[ \tr \left\langle \mu_t , \eta_R \int_{\RR^d} (\widetilde  a_t(y) - \widetilde  a_t(\cdot))(\widetilde  m^1_t(y) - \widetilde  m^2_t(y))\rho_\delta(\cdot - y)dy \right \rangle \\
	&- \int_{\RR^d} (D\widetilde  u^1_t(x) - D\widetilde  u^2_t(x)) \\
	&\qquad \cdot \int_{\RR^d} (\widetilde  a_t(y) - \widetilde  a_t(x))(\widetilde  m^1_t(y) - \widetilde  m^2_t(y))\rho_\delta(x - y)dyD\eta_R(x) dx \\
	&+ \int_{\RR^d} (\widetilde  u^1_t(x) - \widetilde  u^2_t(x))\\
	&\qquad \cdot \tr\bigg[  D^2 \eta_R(x) \int_{\RR^d}(\widetilde  a_t(y) - \widetilde  a_t(x))(\widetilde  m^1_t(y) - \widetilde  m^2_t(y))\rho_\delta(x - y)dy \bigg] dx \Bigg]dt.
\end{align*}
For fixed $R > 0$, the continuous function
\[
	x \mapsto \eta_R(x) \int_{\RR^d} (\widetilde  a_t(y) - \widetilde  a_t(x))(\widetilde  m^1_t(y) - \widetilde  m^2_t(y))\rho_\delta(x - y)dy
\]
is supported in $B_R$, independently of $\delta$, and, as $\delta \to 0$, it converges uniformly to $0$. It follows that the pairing of $\mu$ with this function vanishes as $\delta \to 0$, and, therefore,
\[
	\lim_{\delta \to 0} \int_0^T  \int_{\RR^d} (\widetilde  u^1_t(x) - \widetilde  u^2_t(x)) \alpha^{\delta,R}_t(x) dx = 0.
\]
We also have
\begin{align*}
	\int_{\RR^d} &(\widetilde  u^1_t(x) -  \widetilde  u^2_t(x)) \beta^{\delta,R}_t(x)dx\\
	&= \int_{\RR^d} \left( \widetilde  m^{1,\delta}_t(x) - \widetilde  m^{2,\delta}_t(x) \right) \widetilde  a_t(x) D\eta_R(x) \cdot (D\widetilde  u^1_t(x) - D\widetilde  u^2_t(x))dx, 
\end{align*}
and so, for some constant $C > 0$ independent of $\delta$ and $R$, with probability one,
\[
	\left| \int_0^T \int_{\RR^d} (\widetilde  u^1_t(x) - \widetilde  u^2_t(x)) \beta^{\delta,R}_t(x)dx dt \right|
	\le \frac{C}{R}.
\]
We conclude, upon sending first $\delta \to 0$ and then $R \to 0$ on both sides of \eqref{approxLL}, that
\begin{align*}
	&\EE \int_{\RR^d} \left( \widetilde  G(x,\widetilde  m^1_T) - \widetilde  G(x,\widetilde  m^2_T) \right) (\widetilde  m^{1}_T(x) - \widetilde  m^{2}_T(x))dx\\
	&+ \EE \int_0^T \int_{\RR^d} \left( \widetilde   F_t(x,\widetilde  m^1_t) - \widetilde  F_t(x,\widetilde  m^2_t) \right) (\widetilde  m^{1}_t(x) - \widetilde  m^{2}_t(x)) dxdt
	\le 0.
\end{align*}
The strict monotonicity stated in \eqref{A:FGmonotone} for $G$ and $F$ implies that $\widetilde  G$ and $\widetilde  F$ are strictly monotone, and so we conclude that $\widetilde  m^1 = \widetilde  m^2$. The proof is finished in view of Theorem \ref{T:comparison}, which implies that $\widetilde  u^1 = \widetilde  u^2$ and $M^1 = M^2$.
\end{proof}

%%%%%%%%%%%%%%%%%%%%%%%%%%%%%%%%%%%%%%%%%%%%%%
%% Single Appendix:                         %%
%%%%%%%%%%%%%%%%%%%%%%%%%%%%%%%%%%%%%%%%%%%%%%
%\begin{appendix}
%\section*{???}%% if no title is needed, leave empty \section*{}.
%\end{appendix}
%%%%%%%%%%%%%%%%%%%%%%%%%%%%%%%%%%%%%%%%%%%%%%
%% Multiple Appendixes:                     %%
%%%%%%%%%%%%%%%%%%%%%%%%%%%%%%%%%%%%%%%%%%%%%%
%\begin{appendix}
%\section{???}
%
%\section{???}
%
%\end{appendix}

%%%%%%%%%%%%%%%%%%%%%%%%%%%%%%%%%%%%%%%%%%%%%%
%% Support information, if any,             %%
%% should be provided in the                %%
%% Acknowledgements section.                %%
%%%%%%%%%%%%%%%%%%%%%%%%%%%%%%%%%%%%%%%%%%%%%%
%\begin{acks}[Acknowledgments]
% The authors would like to thank ...
%\end{acks}
%%%%%%%%%%%%%%%%%%%%%%%%%%%%%%%%%%%%%%%%%%%%%%
%% Funding information, if any,             %%
%% should be provided in the                %%
%% funding section.                         %%
%%%%%%%%%%%%%%%%%%%%%%%%%%%%%%%%%%%%%%%%%%%%%%
\begin{funding}
 The first author was partially supported by the third author's Air Force Office for Scientific Research grant FA9550-18-1-0494. 
 
 The second author was partially supported by the National Science Foundation grants DMS-1902658, DMS-1840314, and DMS-2437066.
 
The third author was partially supported  by the National Science Foundation grant DMS-1900599, the Office for Naval Research grant N000141712095 and the Air Force Office for Scientific Research grant FA9550-18-1-0494. 

All three authors were partially supported by the Institute for Mathematical and Statistical Innovations during the Fall of 2021.
\end{funding}

%%%%%%%%%%%%%%%%%%%%%%%%%%%%%%%%%%%%%%%%%%%%%%
%% Supplementary Material, including data   %%
%% sets and code, should be provided in     %%
%% {supplement} environment with title      %%
%% and short description. It cannot be      %%
%% available exclusively as external link.  %%
%% All Supplementary Material must be       %%
%% available to the reader on Project       %%
%% Euclid with the published article.       %%
%%%%%%%%%%%%%%%%%%%%%%%%%%%%%%%%%%%%%%%%%%%%%%
%\begin{supplement}
%\stitle{???}
%\sdescription{???.}
%\end{supplement}

%%%%%%%%%%%%%%%%%%%%%%%%%%%%%%%%%%%%%%%%%%%%%%%%%%%%%%%%%%%%%
%%                  The Bibliography                       %%
%%                                                         %%
%%  imsart-???.bst  will be used to                        %%
%%  create a .BBL file for submission.                     %%
%%                                                         %%
%%  Note that the displayed Bibliography will not          %%
%%  necessarily be rendered by Latex exactly as specified  %%
%%  in the online Instructions for Authors.                %%
%%                                                         %%
%%  MR numbers will be added by VTeX.                      %%
%%                                                         %%
%%  Use \cite{...} to cite references in text.             %%
%%                                                         %%
%%%%%%%%%%%%%%%%%%%%%%%%%%%%%%%%%%%%%%%%%%%%%%%%%%%%%%%%%%%%%

%% if your bibliography is in bibtex format, uncomment commands:
\bibliographystyle{imsart-number} % Style BST file (imsart-number.bst or imsart-nameyear.bst)
\bibliography{cssMFGbib}       % Bibliography file (usually '*.bib')

\begin{thebibliography}{34}
% BibTex style file: imsart-number.bst, 2017-11-03
% Default style options (sort=1,type=number).
% Used options (sort=1,type=number).

\bibitem{bayraktar2019finite}
\begin{barticle}[author]
\bauthor{\bsnm{Bayraktar},~\bfnm{Erhan}\binits{E.}},
  \bauthor{\bsnm{Cecchin},~\bfnm{Alekos}\binits{A.}},
  \bauthor{\bsnm{Cohen},~\bfnm{Asaf}\binits{A.}} \AND
  \bauthor{\bsnm{Delarue},~\bfnm{Fran\c{c}ois}\binits{F.}}
(\byear{2021}).
\btitle{Finite state mean field games with {W}right-{F}isher common noise}.
\bjournal{J. Math. Pures Appl. (9)}
\bvolume{147}
\bpages{98--162}.
\bdoi{10.1016/j.matpur.2021.01.003}
\bmrnumber{4213680}
\end{barticle}
\endbibitem

\bibitem{Be20}
\begin{barticle}[author]
\bauthor{\bsnm{Bertucci},~\bfnm{Charles}\binits{C.}}
(\byear{2021}).
\btitle{Monotone solutions for mean field games master equations: finite state
  space and optimal stopping}.
\bjournal{J. \'{E}c. polytech. Math.}
\bvolume{8}
\bpages{1099--1132}.
\bdoi{10.5802/jep.167}
\bmrnumber{4275225}
\end{barticle}
\endbibitem

\bibitem{Be21}
\begin{barticle}[author]
\bauthor{\bsnm{Bertucci},~\bfnm{Charles}\binits{C.}}
(\byear{2023}).
\btitle{Monotone solutions for mean field games master equations: continuous
  state space and common noise}.
\bjournal{Comm. Partial Differential Equations}
\bvolume{48}
\bpages{1245--1285}.
\bdoi{10.1080/03605302.2023.2276564}
\bmrnumber{4687277}
\end{barticle}
\endbibitem

\bibitem{bertucci2019some}
\begin{barticle}[author]
\bauthor{\bsnm{Bertucci},~\bfnm{Charles}\binits{C.}},
  \bauthor{\bsnm{Lasry},~\bfnm{Jean-Michel}\binits{J.-M.}} \AND
  \bauthor{\bsnm{Lions},~\bfnm{Pierre-Louis}\binits{P.-L.}}
(\byear{2019}).
\btitle{Some remarks on mean field games}.
\bjournal{Comm. Partial Differential Equations}
\bvolume{44}
\bpages{205--227}.
\bdoi{10.1080/03605302.2018.1542438}
\bmrnumber{3941633}
\end{barticle}
\endbibitem

\bibitem{BeLaLi20}
\begin{barticle}[author]
\bauthor{\bsnm{Bertucci},~\bfnm{Charles}\binits{C.}},
  \bauthor{\bsnm{Lasry},~\bfnm{Jean-Michel}\binits{J.-M.}} \AND
  \bauthor{\bsnm{Lions},~\bfnm{Pierre-Louis}\binits{P.-L.}}
(\byear{2021}).
\btitle{Master equation for the finite state space planning problem}.
\bjournal{Arch. Ration. Mech. Anal.}
\bvolume{242}
\bpages{327--342}.
\bdoi{10.1007/s00205-021-01687-8}
\bmrnumber{4302761}
\end{barticle}
\endbibitem

\bibitem{CaSiBook}
\begin{bbook}[author]
\bauthor{\bsnm{Cannarsa},~\bfnm{Piermarco}\binits{P.}} \AND
  \bauthor{\bsnm{Sinestrari},~\bfnm{Carlo}\binits{C.}}
(\byear{2004}).
\btitle{Semiconcave functions, {H}amilton-{J}acobi equations, and optimal
  control}.
\bseries{Progress in Nonlinear Differential Equations and their Applications}
\bvolume{58}.
\bpublisher{Birkh\"{a}user Boston, Inc., Boston, MA}.
\bmrnumber{2041617}
\end{bbook}
\endbibitem

\bibitem{cardaliaguet2020splitting}
\begin{barticle}[author]
\bauthor{\bsnm{Cardaliaguet},~\bfnm{Pierre}\binits{P.}},
  \bauthor{\bsnm{Cirant},~\bfnm{Marco}\binits{M.}} \AND
  \bauthor{\bsnm{Porretta},~\bfnm{Alessio}\binits{A.}}
(\byear{2023}).
\btitle{Splitting methods and short time existence for the master equations in
  mean field games}.
\bjournal{J. Eur. Math. Soc. (JEMS)}
\bvolume{25}
\bpages{1823--1918}.
\bdoi{10.4171/jems/1227}
\bmrnumber{4592861}
\end{barticle}
\endbibitem

\bibitem{CDLL}
\begin{bbook}[author]
\bauthor{\bsnm{Cardaliaguet},~\bfnm{Pierre}\binits{P.}},
  \bauthor{\bsnm{Delarue},~\bfnm{Fran\c{c}ois}\binits{F.}},
  \bauthor{\bsnm{Lasry},~\bfnm{Jean-Michel}\binits{J.-M.}} \AND
  \bauthor{\bsnm{Lions},~\bfnm{Pierre-Louis}\binits{P.-L.}}
(\byear{2019}).
\btitle{The master equation and the convergence problem in mean field games}.
\bseries{Annals of Mathematics Studies}
\bvolume{201}.
\bpublisher{Princeton University Press, Princeton, NJ}.
\bdoi{10.2307/j.ctvckq7qf}
\bmrnumber{3967062}
\end{bbook}
\endbibitem

\bibitem{CSMaster}
\begin{barticle}[author]
\bauthor{\bsnm{Cardaliaguet},~\bfnm{Pierre}\binits{P.}} \AND
  \bauthor{\bsnm{Souganidis},~\bfnm{Panagiotis}\binits{P.}}
(\byear{2022}).
\btitle{Monotone solutions of the master equation for mean field games with
  idiosyncratic noise}.
\bjournal{SIAM J. Math. Anal.}
\bvolume{54}
\bpages{4198--4237}.
\bdoi{10.1137/21M1450008}
\bmrnumber{4451309}
\end{barticle}
\endbibitem

\bibitem{CSmfg}
\begin{barticle}[author]
\bauthor{\bsnm{Cardaliaguet},~\bfnm{Pierre}\binits{P.}} \AND
  \bauthor{\bsnm{Souganidis},~\bfnm{Panagiotis~E.}\binits{P.~E.}}
(\byear{2022}).
\btitle{On first order mean field game systems with a common noise}.
\bjournal{Ann. Appl. Probab.}
\bvolume{32}
\bpages{2289--2326}.
\bdoi{10.1214/21-aap1734}
\bmrnumber{4430014}
\end{barticle}
\endbibitem

\bibitem{CaDeBook}
\begin{bbook}[author]
\bauthor{\bsnm{Carmona},~\bfnm{Ren\'{e}}\binits{R.}} \AND
  \bauthor{\bsnm{Delarue},~\bfnm{Fran\c{c}ois}\binits{F.}}
(\byear{2018}).
\btitle{Probabilistic theory of mean field games with applications. {II}}.
\bseries{Probability Theory and Stochastic Modelling}
\bvolume{84}.
\bpublisher{Springer, Cham}
\bnote{Mean field games with common noise and master equations}.
\bmrnumber{3753660}
\end{bbook}
\endbibitem

\bibitem{carmona2016mean}
\begin{barticle}[author]
\bauthor{\bsnm{Carmona},~\bfnm{Ren\'{e}}\binits{R.}},
  \bauthor{\bsnm{Delarue},~\bfnm{Fran\c{c}ois}\binits{F.}} \AND
  \bauthor{\bsnm{Lacker},~\bfnm{Daniel}\binits{D.}}
(\byear{2016}).
\btitle{Mean field games with common noise}.
\bjournal{Ann. Probab.}
\bvolume{44}
\bpages{3740--3803}.
\bdoi{10.1214/15-AOP1060}
\bmrnumber{3572323}
\end{barticle}
\endbibitem

\bibitem{CG_16}
\begin{barticle}[author]
\bauthor{\bsnm{Catellier},~\bfnm{R.}\binits{R.}} \AND
  \bauthor{\bsnm{Gubinelli},~\bfnm{M.}\binits{M.}}
(\byear{2016}).
\btitle{Averaging along irregular curves and regularisation of {ODE}s}.
\bjournal{Stochastic Process. Appl.}
\bvolume{126}
\bpages{2323--2366}.
\bdoi{10.1016/j.spa.2016.02.002}
\bmrnumber{3505229}
\end{barticle}
\endbibitem

\bibitem{D_07}
\begin{barticle}[author]
\bauthor{\bsnm{Davie},~\bfnm{A.~M.}\binits{A.~M.}}
(\byear{2007}).
\btitle{Uniqueness of solutions of stochastic differential equations}.
\bjournal{Int. Math. Res. Not. IMRN}
\bvolume{24}
\bpages{Art. ID rnm124, 26}.
\bdoi{10.1093/imrn/rnm124}
\bmrnumber{2377011}
\end{barticle}
\endbibitem

\bibitem{djete21}
\begin{barticle}[author]
\bauthor{\bsnm{Djete},~\bfnm{Mao~Fabrice}\binits{M.~F.}}
(\byear{2023}).
\btitle{Large population games with interactions through controls and common
  noise: convergence results and equivalence between open-loop and closed-loop
  controls}.
\bjournal{ESAIM Control Optim. Calc. Var.}
\bvolume{29}
\bpages{Paper No. 39, 42}.
\bdoi{10.1051/cocv/2023005}
\bmrnumber{4598796}
\end{barticle}
\endbibitem

\bibitem{Do61}
\begin{barticle}[author]
\bauthor{\bsnm{Douglis},~\bfnm{Avron}\binits{A.}}
(\byear{1961}).
\btitle{The continuous dependence of generalized solutions of non-linear
  partial differential equations upon initial data}.
\bjournal{Comm. Pure Appl. Math.}
\bvolume{14}
\bpages{267--284}.
\bdoi{10.1002/cpa.3160140307}
\bmrnumber{139848}
\end{barticle}
\endbibitem

\bibitem{EvBook}
\begin{bbook}[author]
\bauthor{\bsnm{Evans},~\bfnm{Lawrence~C.}\binits{L.~C.}}
(\byear{2010}).
\btitle{Partial differential equations},
\bedition{second} ed.
\bseries{Graduate Studies in Mathematics}
\bvolume{19}.
\bpublisher{American Mathematical Society, Providence, RI}.
\bdoi{10.1090/gsm/019}
\bmrnumber{2597943}
\end{bbook}
\endbibitem

\bibitem{EG}
\begin{bbook}[author]
\bauthor{\bsnm{Evans},~\bfnm{Lawrence~C.}\binits{L.~C.}} \AND
  \bauthor{\bsnm{Gariepy},~\bfnm{Ronald~F.}\binits{R.~F.}}
(\byear{2015}).
\btitle{Measure theory and fine properties of functions},
\bedition{revised} ed.
\bseries{Textbooks in Mathematics}.
\bpublisher{CRC Press, Boca Raton, FL}.
\bmrnumber{3409135}
\end{bbook}
\endbibitem

\bibitem{FGP_10}
\begin{barticle}[author]
\bauthor{\bsnm{Flandoli},~\bfnm{F.}\binits{F.}},
  \bauthor{\bsnm{Gubinelli},~\bfnm{M.}\binits{M.}} \AND
  \bauthor{\bsnm{Priola},~\bfnm{E.}\binits{E.}}
(\byear{2010}).
\btitle{Well-posedness of the transport equation by stochastic perturbation}.
\bjournal{Invent. Math.}
\bvolume{180}
\bpages{1--53}.
\bdoi{10.1007/s00222-009-0224-4}
\bmrnumber{2593276}
\end{barticle}
\endbibitem

\bibitem{Fl69}
\begin{barticle}[author]
\bauthor{\bsnm{Fleming},~\bfnm{Wendell~H.}\binits{W.~H.}}
(\byear{1969}).
\btitle{The {C}auchy problem for a nonlinear first order partial differential
  equation}.
\bjournal{J. Differential Equations}
\bvolume{5}
\bpages{515--530}.
\bdoi{10.1016/0022-0396(69)90091-6}
\bmrnumber{235269}
\end{barticle}
\endbibitem

\bibitem{gangbo2021mean}
\begin{bunpublished}[author]
\bauthor{\bsnm{Gangbo},~\bfnm{W.}\binits{W.}},
  \bauthor{\bsnm{M\'{e}sz\'{a}ros},~\bfnm{A.~R.}\binits{A.~R.}},
  \bauthor{\bsnm{Mou},~\bfnm{C.}\binits{C.}} \AND
  \bauthor{\bsnm{Zhang},~\bfnm{J.}\binits{J.}}
\btitle{Mean Field Games Master Equations with Non-separable Hamiltonians and
  Displacement Monotonicity}.
\bnote{Preprint, arXiv:2101.12362 [math.AP]}.
\end{bunpublished}
\endbibitem

\bibitem{ishiilionssemiconc}
\begin{barticle}[author]
\bauthor{\bsnm{Ishii},~\bfnm{H.}\binits{H.}} \AND
  \bauthor{\bsnm{Lions},~\bfnm{P.~L.}\binits{P.~L.}}
(\byear{1990}).
\btitle{Viscosity solutions of fully nonlinear second-order elliptic partial
  differential equations}.
\bjournal{J. Differential Equations}
\bvolume{83}
\bpages{26--78}.
\bdoi{10.1016/0022-0396(90)90068-Z}
\bmrnumber{1031377}
\end{barticle}
\endbibitem

\bibitem{Kr60}
\begin{barticle}[author]
\bauthor{\bsnm{Kru\v{z}kov},~\bfnm{S.~N.}\binits{S.~N.}}
(\byear{1960}).
\btitle{The {C}auchy problem in the large for certain non-linear first order
  differential equations}.
\bjournal{Soviet Math. Dokl.}
\bvolume{1}
\bpages{474--477}.
\bmrnumber{0121575}
\end{barticle}
\endbibitem

\bibitem{LL06cr1}
\begin{barticle}[author]
\bauthor{\bsnm{Lasry},~\bfnm{Jean-Michel}\binits{J.-M.}} \AND
  \bauthor{\bsnm{Lions},~\bfnm{Pierre-Louis}\binits{P.-L.}}
(\byear{2006}).
\btitle{Jeux \`a champ moyen. {I}. {L}e cas stationnaire}.
\bjournal{C. R. Math. Acad. Sci. Paris}
\bvolume{343}
\bpages{619--625}.
\bdoi{10.1016/j.crma.2006.09.019}
\bmrnumber{2269875}
\end{barticle}
\endbibitem

\bibitem{LL06cr2}
\begin{barticle}[author]
\bauthor{\bsnm{Lasry},~\bfnm{Jean-Michel}\binits{J.-M.}} \AND
  \bauthor{\bsnm{Lions},~\bfnm{Pierre-Louis}\binits{P.-L.}}
(\byear{2006}).
\btitle{Jeux \`a champ moyen. {II}. {H}orizon fini et contr\^{o}le optimal}.
\bjournal{C. R. Math. Acad. Sci. Paris}
\bvolume{343}
\bpages{679--684}.
\bdoi{10.1016/j.crma.2006.09.018}
\bmrnumber{2271747}
\end{barticle}
\endbibitem

\bibitem{LLJapan}
\begin{barticle}[author]
\bauthor{\bsnm{Lasry},~\bfnm{Jean-Michel}\binits{J.-M.}} \AND
  \bauthor{\bsnm{Lions},~\bfnm{Pierre-Louis}\binits{P.-L.}}
(\byear{2007}).
\btitle{Mean field games}.
\bjournal{Jpn. J. Math.}
\bvolume{2}
\bpages{229--260}.
\bdoi{10.1007/s11537-007-0657-8}
\bmrnumber{2295621}
\end{barticle}
\endbibitem

\bibitem{LiCoursCollege}
\begin{bunpublished}[author]
\bauthor{\bsnm{Lions},~\bfnm{P.~L.}\binits{P.~L.}}
\btitle{Courses at the {C}oll\`{e}ge de {F}rance}.
\end{bunpublished}
\endbibitem

\bibitem{lionsHJB}
\begin{bincollection}[author]
\bauthor{\bsnm{Lions},~\bfnm{P.~L.}\binits{P.~L.}}
(\byear{1983}).
\btitle{Optimal control of diffusion processes and
  {H}amilton-{J}acobi-{B}ellman equations. {III}. {R}egularity of the optimal
  cost function}.
In \bbooktitle{Nonlinear partial differential equations and their applications.
  {C}oll\`ege de {F}rance seminar, {V}ol. {V} ({P}aris, 1981/1982)}.
\bseries{Res. Notes in Math.}
\bvolume{93}
\bpages{95--205}.
\bpublisher{Pitman, Boston, MA}.
\bmrnumber{725360}
\end{bincollection}
\endbibitem

\bibitem{lionsHJB2}
\begin{barticle}[author]
\bauthor{\bsnm{Lions},~\bfnm{P.~L.}\binits{P.~L.}}
(\byear{1983}).
\btitle{Optimal control of diffusion processes and
  {H}amilton-{J}acobi-{B}ellman equations. {II}. {V}iscosity solutions and
  uniqueness}.
\bjournal{Comm. Partial Differential Equations}
\bvolume{8}
\bpages{1229--1276}.
\bdoi{10.1080/03605308308820301}
\bmrnumber{709162}
\end{barticle}
\endbibitem

\bibitem{MoZh}
\begin{barticle}[author]
\bauthor{\bsnm{Mou},~\bfnm{Chenchen}\binits{C.}} \AND
  \bauthor{\bsnm{Zhang},~\bfnm{Jianfeng}\binits{J.}}
(\byear{2024}).
\btitle{Wellposedness of second order master equations for mean field games
  with nonsmooth data}.
\bjournal{Mem. Amer. Math. Soc.}
\bvolume{302}
\bpages{v+86}.
\bdoi{10.1090/memo/1515}
\bmrnumber{4813035}
\end{barticle}
\endbibitem

\bibitem{Pe92}
\begin{barticle}[author]
\bauthor{\bsnm{Peng},~\bfnm{Shi~Ge}\binits{S.~G.}}
(\byear{1992}).
\btitle{Stochastic {H}amilton-{J}acobi-{B}ellman equations}.
\bjournal{SIAM J. Control Optim.}
\bvolume{30}
\bpages{284--304}.
\bdoi{10.1137/0330018}
\bmrnumber{1149069}
\end{barticle}
\endbibitem

\bibitem{QIU18}
\begin{barticle}[author]
\bauthor{\bsnm{Qiu},~\bfnm{Jinniao}\binits{J.}}
(\byear{2018}).
\btitle{Viscosity solutions of stochastic {H}amilton-{J}acobi-{B}ellman
  equations}.
\bjournal{SIAM J. Control Optim.}
\bvolume{56}
\bpages{3708--3730}.
\bdoi{10.1137/17M1148232}
\bmrnumber{3864678}
\end{barticle}
\endbibitem

\bibitem{QiuWei}
\begin{barticle}[author]
\bauthor{\bsnm{Qiu},~\bfnm{Jinniao}\binits{J.}} \AND
  \bauthor{\bsnm{Wei},~\bfnm{Wenning}\binits{W.}}
(\byear{2019}).
\btitle{Uniqueness of viscosity solutions of stochastic {H}amilton-{J}acobi
  equations}.
\bjournal{Acta Math. Sci. Ser. B (Engl. Ed.)}
\bvolume{39}
\bpages{857--873}.
\bdoi{10.1007/s10473-019-0314-3}
\bmrnumber{4066509}
\end{barticle}
\endbibitem

\bibitem{yamadawatanabe1971}
\begin{barticle}[author]
\bauthor{\bsnm{Yamada},~\bfnm{Toshio}\binits{T.}} \AND
  \bauthor{\bsnm{Watanabe},~\bfnm{Shinzo}\binits{S.}}
(\byear{1971}).
\btitle{On the uniqueness of solutions of stochastic differential equations}.
\bjournal{J. Math. Kyoto Univ.}
\bvolume{11}
\bpages{155--167}.
\bdoi{10.1215/kjm/1250523691}
\bmrnumber{278420}
\end{barticle}
\endbibitem

\end{thebibliography}

%% or include bibliography directly:
% \begin{thebibliography}{}
% \bibitem{b1}
% \end{thebibliography}

\end{document}